\documentclass{amsart}
\usepackage{amssymb}
\usepackage{latexsym}
\usepackage{euscript}

\usepackage{todonotes} 
\usepackage{theoremref}

\usepackage{enumerate} 


\def\sD{{\mathfrak D}}   \def\sE{{\mathfrak E}}   \def\sF{{\mathfrak F}}
   \def\sH{{\mathfrak H}}   
   \def\sK{{\mathfrak K}}   \def\sL{{\mathfrak L}}
\def\sM{{\mathfrak M}}      
      \def\sR{{\mathfrak R}}

\def\st{{\mathfrak t}}

\def\sb{{\mathfrak b}}

      \def\dC{{\mathbb C}}

\def\cD{{\EuScript D}}

\def\wt#1{{{\widetilde #1} }}

\def\wh#1{{{\widehat #1} }}

\def\bm\chi{\mbox{\boldmath$\chi$}}
\def\half{{\frac{1}{2}}}

\def\RE{{\rm Re\,}}
\def\IM{{\rm Im\,}}
\def\ker{{\rm ker\,}}
\def\ran{{\rm ran\,}}
\def\cran{{\rm \overline{ran}\,}}
\def\dom{{\rm dom\,}}
\def\mul{{\rm mul\,}}

\def\cdom{{\rm d\overline{om}\,}}
\def\clos{{\rm clos\,}}

\let\xker=\ker \def\ker{{\xker\,}}

\def\uphar{{\upharpoonright\,}}

\newtheorem{theorem}{Theorem}[section]
\newtheorem{proposition}[theorem]{Proposition}
\newtheorem{corollary}[theorem]{Corollary}
\newtheorem{lemma}[theorem]{Lemma}

\theoremstyle{definition}
\newtheorem{example}[theorem]{Example}
\newtheorem{remark}[theorem]{Remark}

\numberwithin{equation}{section}

\begin{document}

\setcounter{page}{1}

\title[Factorized sectorial relations and form sums]
{Factorized sectorial relations, their maximal sectorial extensions, and form sums}

\author{S.~Hassi}
\author{A.~Sandovici}
\author{H.S.V.~de~Snoo}

\address{Department of Mathematics and Statistics \\
University of Vaasa \\
P.O. Box 700, 65101 Vaasa \\
Finland}
\email{sha@uwasa.fi}

\address{
Department of Mathematics and Informatics \\
"Gheorghe Asachi", Technical University of Ia\c{s}i \\
B-dul Carol I, nr. 11, 700506, Ia\c{s}i \\
Romania}
\email{adrian.sandovici@luminis.ro}

\address{Bernoulli Institute for Mathematics, Computer Science and Artificial Intelligence \\
University of Groningen \\
P.O. Box 407, 9700 AK Groningen \\
Nederland}
\email{hsvdesnoo@gmail.com}

%
%
%

\dedicatory{Dedicated to the memory of R.G. Douglas \\ with admiration for his contributions to mathematics}

\subjclass[2010]{Primary 47B44; Secondary 47A06, 47A07, 47B65.}

\keywords{Sectorial relation, Friedrichs extension, Kre\u{\i}n
extension, extremal extension, form sum.}


\begin{abstract}
In this paper sectorial operators, or more generally, sectorial relations and
their maximal sectorial extensions in a Hilbert space $\sH$ are considered.
The particular interest is in sectorial relations $S$,
which can be expressed in the factorized form
\[
 S=T^*(I+iB)T  \quad \text{or}\quad S=T(I+iB)T^*,
\]
where $B$ is a bounded selfadjoint operator in a Hilbert space $\sK$ and $T:\sH\to\sK$ or $T:\sK\to\sH$, respectively,
is a linear operator or a linear relation which is not assumed to be closed.
Using the specific factorized form of $S$, a description of all the maximal sectorial
extensions of $S$ is given with a straightforward construction of the extreme extensions
$S_F$, the Friedrichs extension, and $S_K$, the Kre\u{\i}n extension of $S$, which uses
the above factorized form of $S$. As an application of this construction the
form sum of maximal sectorial extensions of two sectorial relations is treated.
\end{abstract} \maketitle

\section{\textbf{Introduction}}\label{sec1}

Factorizations and decompositions of operators play a fundamental role in functional analysis and operator theory.
A well-known example is the ``Douglas lemma'' formulated in \cite[Theorem~1]{Dou} which makes a connection between
range inclusion, factorization, and ordering of operators. The importance of this connection is reflected by
the remarkable number of applications as well as its usage in the literature where this result plays a central role.
The present paper is not aimed to study factorizations on such a general level;
it is limited to unbounded nonnegative and sectorial operators, or more generally to sectorial relations $S$, which admit
a factorization of the form $S=T^*(I+iB)T$ or $S=T(I+iB)T^*$, where $T$ is a linear relation and $B\in \mathbf{B}(\sH)$ is selfadjoint. The main interest here is in the case where the (linear) relation $T$
is not closed and, therefore, $S$ need not be a maximal sectorial object. This leads to the extension problem for $S$.
Namely $H=T^*(I+iB)T^{**}$ or $H=T^{**}(I+iB)T^*$, respectively, is a maximal sectorial extension of $S$ and it is natural to ask
whether this $H$ is the only maximal sectorial extension of $S$.
However, since $T$ is not closed and no further conditions are required on $T$, the relation $S$ and its closure
can have positive defect. This yields immediately the problem ``what are the Friedrichs and the Kre\u{\i}n (maximal sectorial) extensions of $S$?''
In order to answer these questions some background definitions and facts on general sectorial operators and relations are first recalled.

A (linear) relation $S$ in a Hilbert space $\sH$ is said to be \textit{sectorial} with vertex at the origin
and semi-angle $\alpha$, $\alpha \in [0,\pi/2)$, if
\[
 | \IM (h',h) | \le (\tan \alpha) \RE (h',h),
 \quad  \{ h, h'\} \in H.
\]
Clearly, the closure of a sectorial relation is also sectorial.
A sectorial relation $S$ in a Hilbert space $\sH$ is said to be
\textit{maximal sectorial} if the existence of a sectorial relation
$\wt S$ in $\sH$ with $S \subset \wt S$ implies $\wt S = S$.
A maximal sectorial relation is automatically closed.

A sectorial relation $S$ generates a sectorial form, which in general is nondensely defined but closable
as stated in the next lemma; for a proof see \cite[Theorem~VI.1.27]{Kato}, \cite[Lemma~7.1]{HSSW17}.

\begin{lemma}\label{kak0}
Let $S$ be a sectorial relation in a Hilbert space $\sH$.
Then the form $\st_S$ given by
\[
 \st_S[\varphi,\psi]=(\varphi',\psi), \quad \{\varphi,\varphi'\}, \{\psi,\psi'\} \in S,
\]
with $\dom \st_S=\dom S$ is well-defined, sectorial, and closable.
\end{lemma}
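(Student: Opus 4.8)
The plan is to verify, in order, that $\st_S$ is well defined and sesquilinear, that it is sectorial, and finally that it is closable; only the last step is substantial. For well-definedness, observe that $\st_S[\varphi,\psi]=(\varphi',\psi)$ does not involve $\psi'$, so the only issue is independence of the choice of $\varphi'$, which amounts to $\mul S\subseteq(\dom S)^\perp$. To see this, let $\{0,f\}\in S$ and $\{\psi,\psi'\}\in S$; since $S$ is a linear relation, $\{\psi,\psi'+\lambda f\}\in S$ for every $\lambda\in\dC$, and the sectoriality inequality gives $|\IM(\psi'+\lambda f,\psi)|\le(\tan\alpha)\RE(\psi'+\lambda f,\psi)$. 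Taking $\lambda=-t\,\overline{(f,\psi)}$ with $t>0$, the left side stays equal to $|\IM(\psi',\psi)|$ while the right side equals $\RE(\psi',\psi)-t|(f,\psi)|^2$, which tends to $-\infty$ as $t\to\infty$ unless $(f,\psi)=0$; hence $(f,\psi)=0$. Consequently, if $\{\varphi,\varphi_1'\},\{\varphi,\varphi_2'\}\in S$ then $\varphi_1'-\varphi_2'\in\mul S$ is orthogonal to $\dom S$, so $(\varphi_1',\psi)=(\varphi_2',\psi)$; sesquilinearity of $\st_S$ then follows from the additivity and homogeneity of $S$ in the first component and the linearity of the inner product in the second. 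Sectoriality is equally direct: for $\{\varphi,\varphi'\}\in S$ one has $\st_S[\varphi,\varphi]=(\varphi',\varphi)$, so the sectoriality of $S$ gives $|\IM\st_S[\varphi,\varphi]|\le(\tan\alpha)\RE\st_S[\varphi,\varphi]$ with $\RE\st_S[\varphi,\varphi]\ge0$, i.e.\ $\st_S$ is sectorial with vertex at the origin and semi-angle $\alpha$.

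For closability, recall that $\st_S$ is closable precisely when every sequence $(\varphi_n)$ in its domain with $\varphi_n\to0$ in $\sH$ and $\st_S[\varphi_n-\varphi_m,\varphi_n-\varphi_m]\to0$ satisfies $\st_S[\varphi_n,\varphi_n]\to0$. Fix such a sequence, choose $\{\varphi_n,\varphi_n'\}\in S$, and let $\mathfrak h$ and $\mathfrak k$ be the symmetric real and imaginary parts of $\st_S$, so that $\st_S=\mathfrak h+i\mathfrak k$, the form $\mathfrak h$ is nonnegative, and (applying Cauchy--Schwarz to the nonnegative forms $(\tan\alpha)\,\mathfrak h\pm\mathfrak k$ and $\mathfrak h$ itself) one has $|\st_S[u,v]|\le c\,\mathfrak h[u,u]^{1/2}\mathfrak h[v,v]^{1/2}$ and $|\mathfrak k[u,v]|\le c\,\mathfrak h[u,u]^{1/2}\mathfrak h[v,v]^{1/2}$ for a constant $c$ depending only on $\alpha$. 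The hypotheses give $\mathfrak h[\varphi_n-\varphi_m,\varphi_n-\varphi_m]\to0$, hence by the triangle inequality for the seminorm $u\mapsto\mathfrak h[u,u]^{1/2}$ the numbers $\mathfrak h[\varphi_n,\varphi_n]$ are bounded and converge to some $L\ge0$, and the bound for $\mathfrak k$ similarly yields $\mathfrak k[\varphi_n,\varphi_n]\to K$ for some $K\in\dR$. Moreover, since $\{\varphi_n-\varphi_m,\varphi_n'-\varphi_m'\}\in S$, the estimate $|(\varphi_n'-\varphi_m',\psi)|=|\st_S[\varphi_n-\varphi_m,\psi]|\le c\,\mathfrak h[\varphi_n-\varphi_m,\varphi_n-\varphi_m]^{1/2}\mathfrak h[\psi,\psi]^{1/2}$ shows that for each $\psi\in\dom S$ the scalars $(\varphi_n',\psi)=\st_S[\varphi_n,\psi]$ form a Cauchy sequence, whose limit we denote $\ell(\psi)$.

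Now evaluate $\mathfrak h$ and $\mathfrak k$ on the pair $(\varphi_n,\varphi_m)$ in two ways. For fixed $m$, letting $n\to\infty$ and using $\varphi_n\to0$ in $\sH$ together with $(\varphi_n',\varphi_m)\to\ell(\varphi_m)$ gives
\[
 \lim_{n}\mathfrak h[\varphi_n,\varphi_m]=\frac{1}{2}\,\ell(\varphi_m),\qquad
 \lim_{n}\mathfrak k[\varphi_n,\varphi_m]=\frac{1}{2i}\,\ell(\varphi_m).
\]
On the other hand, $|\mathfrak h[\varphi_n,\varphi_m]-\mathfrak h[\varphi_n,\varphi_n]|\le\mathfrak h[\varphi_n,\varphi_n]^{1/2}\mathfrak h[\varphi_n-\varphi_m,\varphi_n-\varphi_m]^{1/2}$, and similarly for $\mathfrak k$ with the extra factor $c$, so $\mathfrak h[\varphi_n,\varphi_m]\to L$ and $\mathfrak k[\varphi_n,\varphi_m]\to K$ as $n,m\to\infty$. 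Comparing the two evaluations, $\lim_m\ell(\varphi_m)=2L$ and $\lim_m\ell(\varphi_m)=2iK$, hence $L=iK$; since $L$ and $K$ are real this forces $L=K=0$, that is, $\st_S[\varphi_n,\varphi_n]=\mathfrak h[\varphi_n,\varphi_n]+i\,\mathfrak k[\varphi_n,\varphi_n]\to0$. Therefore $\st_S$ is closable.

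The main obstacle is this last step: the identity $\st_S[\varphi,\psi]=(\varphi',\psi)$ is linear but \emph{not} symmetric in $\varphi$ and $\psi$, so the vanishing $(\varphi_n,\psi')\to0$ coming from $\varphi_n\to0$ cannot be applied directly to the quantity $(\varphi_n',\psi)$ occurring in $\st_S[\varphi_n,\psi]$; passing to the symmetric components $\mathfrak h,\mathfrak k$ and testing against the $\varphi_m$ themselves is exactly what circumvents this. Compare \cite[Theorem~VI.1.27]{Kato} and \cite[Lemma~7.1]{HSSW17}.
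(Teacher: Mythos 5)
Your proof is correct. Since the paper itself only refers to \cite[Theorem~VI.1.27]{Kato} and \cite[Lemma~7.1]{HSSW17} for this lemma, the natural benchmark is the classical argument there, and your route differs from it only in the closability step. The well-definedness part (showing $\mul S\perp\dom S$ by perturbing $\psi'$ along $\mul S$ and driving the real part to $-\infty$) and the sectoriality of the form are exactly the standard treatment. For closability, however, the ``obstacle'' you describe at the end is not actually there: writing $\st_S[\varphi_n,\varphi_n]=\st_S[\varphi_n,\varphi_n-\varphi_m]+\st_S[\varphi_n,\varphi_m]$ and noting $\st_S[\varphi_n,\varphi_m]=(\varphi_n',\varphi_m)$, one controls the first term by the sectorial Cauchy--Schwarz bound uniformly for $n,m\ge N$ and then, for each \emph{fixed} $n\ge N$, lets $m\to\infty$, so that $(\varphi_n',\varphi_m)\to 0$ simply because $\varphi_m\to 0$ sits in the second slot of the inner product against the fixed vector $\varphi_n'$; this yields $|\st_S[\varphi_n,\varphi_n]|\le\varepsilon$ for all $n\ge N$ in two lines. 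Your detour --- splitting $\st_S$ into $\mathfrak h$ and $\mathfrak k$, extracting the limits $L$, $K$ and the functional $\ell$, and comparing the iterated limit $\lim_m\lim_n$ with the joint limit to get $L=iK$, hence $L=K=0$ --- is sound (the comparison of the iterated with the joint limit is legitimate because the joint limit exists), but it is considerably longer than necessary and rests on the same estimate $|\st_S[u,v]|\le c\,\mathfrak h[u,u]^{1/2}\mathfrak h[v,v]^{1/2}$ that already powers the short argument; the only genuine difference is which index you freeze before passing to the limit.
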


According to the first representation theorem the closure of the form $\st_S$ determines a unique maximal sectorial
relation, which is the \textit{Friedrichs extension} $S_F$ of $S$; for the densely defined case see \cite[VI, Theorem 2.1]{Kato}
for the nondensely defined case see \cite{RB}, and for the linear relation case see \cite{Ar2,Ar}; a recent treatment in the general case
can be found in \cite[Section~7]{HSSW17}. The closure of the form $\st_{S}$ is denoted by $\st_{S_F}$.
According to the first representation theorem the domain of $S$ is a core for the closed form $\st_{S_F}$.
It is a consequence of the first representation theorem that there is a one-to-one correspondence between all
maximal sectorial relations $H$ in $\sH$ and all closed sectorial forms $\st$ (not necessarily densely defined) in $\sH$; cf. \cite[VI, Theorem 2.7]{Kato}, \cite[Theorem~4.3]{HSSW17}.
This correspondence is denoted by $\st\to H=:H_\st$; cf. Lemma \ref{kak0} when $S=H$ is maximal sectorial and $\st_H$ stands for the closure of $\st_S$.

All maximal sectorial relations $H$ admit a factorization which uses the real part $(\st_H)_{r}$
of the associated closed form $\st_H$. The real part is a closed nonnegative form
and by the first representation theorem there is a unique nonnegative selfadjoint relation $H_{r}$
corresponding to the closed nonnegative form $(\st_H)_{r}$.
The present formulation for the induced factorization for $H$ is taken from \cite[Theorem~6.2]{HSSW17},
for the densely defined case; see \cite[VI, Theorem 3.2]{Kato}.

\begin{lemma}\label{s-second}
Let $H$ be a maximal sectorial relation and let the closed
sectorial form $\st_H$ correspond to $H$.
Let $(\st_H)_r$ be the corresponding closed
nonnegative form and let $H_r$ be the corresponding nonnegative
selfadjoint relation. Then there exists a unique selfadjoint
operator $B \in \mathbf{B}(\sH)$, which is zero on
\begin{equation}\label{Bunique}
\sH \ominus \cran (H_{r})^{\half}_{\rm s}=\ker H_r\oplus\mul H_r,
\end{equation}
with $\| B \| = \tan \alpha$, such that the form $\st_H$ is given by
\[
 \st_H[h,k]=((I +iB) (H_{r})_{\rm s}^{\half} h, (H_{r})_{\rm s}^{\half} k),
 \quad
 h,k \in \dom \st_H =\dom H_{r}^{\half}.
\]
The maximal sectorial relation $H$ corresponding to $\st_H$ is given by
\begin{equation}\label{sss}
H =(H_{r})^{\half} (I + i B) (H_{r})^{\half}.
 \end{equation}
 The orthogonal operator part of $H$ is given by
 \begin{equation}\label{sss+}
H_{\rm s} = (H_{r})^{\half}_{\rm s} (I + i B) (H_{r})^{\half}_{\rm s},
  \end{equation}
where $(H_{r})_{\rm s}=P_{\cdom H} H_{r}$ is the operator part of $(H_{r})$.
\end{lemma}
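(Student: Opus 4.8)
The statement really has two parts: (i) the existence and uniqueness of the bounded selfadjoint $B$ with the prescribed vanishing set and norm, representing $\st_H$, and (ii) the resulting factorizations \eqref{sss} and \eqref{sss+}. I would begin with part (i). The starting point is the polar (Lebesgue-type) decomposition of the closed sectorial form $\st_H$ into its real part $(\st_H)_r$ and imaginary part $(\st_H)_i$, both closed and with $|(\st_H)_i[h,k]|\le(\tan\alpha)(\st_H)_r[h,h]^{1/2}(\st_H)_r[h,h]^{1/2}$ on $\dom\st_H=\dom(\st_H)_r=\dom H_r^{1/2}$; here one uses that $\dom H_r^{1/2}=\dom(\st_H)_r$ is exactly the form domain, which is part of the first representation theorem already invoked in the excerpt. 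By the first representation theorem applied to $(\st_H)_r$ one has $(\st_H)_r[h,k]=(H_r^{1/2}h,H_r^{1/2}k)$, and since $(H_r)_s^{1/2}$ has range dense in $\cdom H=\overline{\ran}(H_r)_s^{1/2}$, the map $H_r^{1/2}h\mapsto (\st_H)_i[h,k]$-type pairing defines, via boundedness by $\tan\alpha$, a bounded selfadjoint sesquilinear form on $\overline{\ran}(H_r)_s^{1/2}$, hence a bounded selfadjoint operator there with norm $\le\tan\alpha$; extend it by $0$ on the orthogonal complement, which by \eqref{Bunique} is $\ker H_r\oplus\mul H_r$. That gives $B\in\mathbf B(\sH)$ with $(\st_H)_i[h,k]=(B(H_r)_s^{1/2}h,(H_r)_s^{1/2}k)$, and therefore $\st_H[h,k]=((I+iB)(H_r)_s^{1/2}h,(H_r)_s^{1/2}k)$. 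Uniqueness of $B$ is forced because $(H_r)_s^{1/2}$ has dense range in $\cdom H$ and $B$ is prescribed to be $0$ on the complement; the norm equality $\|B\|=\tan\alpha$ (rather than $\le$) comes from identifying $\tan\alpha$ with the optimal sectoriality constant of $\st_H$, i.e. the smallest constant in the sectorial inequality, which matches the operator norm of the symmetric form $B$ represents.

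For part (ii), the factorization \eqref{sss} for the relation $H$ follows from the correspondence $\st\mapsto H_\st$ together with the form representation just obtained: one checks that the maximal sectorial relation associated with the form $h,k\mapsto((I+iB)(H_r)_s^{1/2}h,(H_r)_s^{1/2}k)$ is exactly the product relation $(H_r)^{1/2}(I+iB)(H_r)^{1/2}$, interpreting the products in the sense of relations (so that the multivalued parts are correctly picked up). Concretely, $\{h,k'\}\in H$ iff $h\in\dom\st_H$ and there is $k$ with $\st_H[h,\psi]=(k,\psi)$ for all $\psi\in\dom\st_H$; unravelling the inner product $((I+iB)(H_r)_s^{1/2}h,(H_r)_s^{1/2}\psi)$ and using that $H_r^{1/2}$ is selfadjoint shows this is equivalent to $k'\in (H_r)^{1/2}(I+iB)(H_r)^{1/2}h$. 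Finally \eqref{sss+} is obtained by passing to the orthogonal operator part: $H_s=P_{\cdom H}H$, and since $B$ acts as $0$ outside $\cdom H$ and commutes with the projection in the relevant sense, the operator-part version of the product is $(H_r)_s^{1/2}(I+iB)(H_r)_s^{1/2}$, with $(H_r)_s=P_{\cdom H}H_r$ by the analogous statement for the nonnegative selfadjoint relation $H_r$.

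The main obstacle I expect is bookkeeping with multivalued parts and domains: one must be careful that $\dom H_r^{1/2}$ really equals the form domain $\dom\st_H$ (so that $B$ is densely enough defined to be unique), that the orthogonal complement of $\overline{\ran}(H_r)_s^{1/2}$ is precisely $\ker H_r\oplus\mul H_r$ as asserted in \eqref{Bunique}, and that the products $(H_r)^{1/2}(I+iB)(H_r)^{1/2}$ in \eqref{sss} are formed in the category of relations so that no domain is accidentally enlarged or shrunk. Once the form-to-relation correspondence and the Lebesgue decomposition of $\st_H$ are in place, the algebra in (ii) is essentially a verification; the content is concentrated in establishing the boundedness and the precise vanishing set of $B$ in (i). Since the densely defined version is \cite[Theorem~VI.3.2]{Kato} and the relation version is \cite[Theorem~6.2]{HSSW17}, I would organize the proof as a reduction to those, highlighting only the relation-theoretic modifications, namely the splitting off of $\mul H=\mul H_r$ and the use of operator parts.
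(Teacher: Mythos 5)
The paper does not prove this lemma at all: it is quoted as background, with the proof delegated to \cite[Theorem~6.2]{HSSW17} and, for the densely defined case, \cite[VI, Theorem~3.2]{Kato}, and your sketch reproduces exactly the argument of those sources (split $\st_H$ into real and imaginary parts, represent $(\st_H)_r$ by $H_r^{1/2}$, bound the imaginary part by $\tan\alpha$ times the real part to obtain $B$ on $\cran (H_r)_s^{1/2}$ extended by zero, then verify the relation-product factorization). So the proposal is correct and takes essentially the same route the paper points to; only the minor typo $(\st_H)_r[h,h]^{1/2}(\st_H)_r[h,h]^{1/2}$ (the second factor should be $(\st_H)_r[k,k]^{1/2}$) needs fixing.
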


It is the purpose of this paper to study properties of relations $S$
of the form $T^*(I+iB)T$ or $T(I+iB)T^*$ when $T$ is not assumed to be closed
and to apply these properties in the study of form sums and sums of sectorial relations.
In this case $S$ is sectorial, but typically it is not maximal sectorial.
By Lemma \ref{kak0} it induces, in general, a nondensely defined sectorial form,
which admits a closure that is again a sectorial form.
By the first representation theorem (see \cite{HSSW17}, \cite{Kato}) this closed
sectorial form corresponds to a maximal sectorial relation which, in addition, extends $S$.
This extension determines (the sectorial version of) the Friedrichs extension $S_F$ of $S$,
analogous to the case where $S$ is nonnegative. Since with $S$ also $S^{-1}$ is
sectorial (the sectorial version of) the Kre\u{\i}n extension of $S$ can be introduced as
$((S^{-1})_F)^{-1}$. The Friedrichs extension and the Kre\u{\i}n
extension are maximal sectorial extensions of $S$, which are in addition extremal. In the
nonnegative case all nonnegative selfadjoint extensions of $S$ are
between $S_{F}$ and $S_{K}$. In the sectorial case there is a
version of this property for their real parts (obtained via the
real part of the corresponding forms); see \cite[Theorem~7.6]{HSSW17} and  \cite[Theorem 3]{Ar} for a related result.

In Section \ref{sec2} some basic properties of sectorial relations of the form
\[
 S=T^*(I+iB)T \quad \text{and}\quad S'=T(I+iB)T^*
\]
are studied.
In particular, it is shown when the maximal sectorial extension
\[
 H=T^*(I+iB)T^{**}
\]
coincides with the Friedrichs extension $S_F$ of $S$ (Theorem \ref{SandS_F}) and when
\[
 H'=T^{**}(I+iB)T^*
\]
coincides with the Kre\u{\i}n extension $(S')_K$ of $S'$ (Theorem~\ref{SandS_K}).
To give a complete picture of the situation the case $S=T^*(I+iB)T$ is investigated in detail in Section \ref{sec2.2} by
giving a general procedure that leads to the description of the Friedrichs extension $S_F$ and the Kre\u{\i}n extension $S_K$ of $S$
and, in fact, all the extremal extensions of $S$ combined with their associated closed sectorial forms; see Theorem~\ref{s-F-N} and Proposition~\ref{Msecforms}.

In Section \ref{sec3} a particular case of a sectorial relation with the factorization $S'=T(I+iB)T^*$ is investigated.
The choice for $S'$ treated here occurs when studying the form sums $\st_1+\st_2$ of two closed sectorial (in particular nonnegative) forms
in a Hilbert space $\sH$. To explain this let $H_1$ and $H_2$ be the maximal sectorial relations in $\sH$ associated with $\st_1$ and $\st_2$, respectively.
Since the sum $\st_1+\st_2$ is a closed form in $\sH$, there is again an associated maximal sectorial relation $\wh H$ that corresponds to $\st_1+\st_2$;
cf. \cite[Chapter VI]{Kato}.
In a natural way $\wh H$ can be seen as a maximal sectorial extension of the operator-like sum $H_1+H_2$ of the maximal sectorial relations $H_1$ and $H_2$;
for this reason $\wh H$ is called the \textit{form sum extension} of $H_1+H_2$.
To investigate the form sum extension $\wh H$ of $H_1+H_2$ the Friedrichs and the Kre\u{\i}n extension of the sum $H_1+H_2$ will be constructed;
see Theorems~\ref{ss-twee} and~\ref{KVNext}. This leads to a description of all maximal sectorial extensions that are extremal in Proposition \ref{s-sum-caracter}.
It turns out that the form sum extension $\wh H$ of $H_1+H_2$ need not be extremal; a characterization for this is given in Theorem \ref{s-sum-een}.

For the treatment in Section \ref{sec3} the factorized form of $H_1+H_2$ is again playing a key role.
Indeed, according to Lemma \ref{s-second} $H_1$ and $H_2$ as maximal sectorial relations admit the factorizations
\[
  H_{j} = A_{j}^{\half} (I+iB_{j}) A_{j}^{\half},
\]
where $A_{j}$ (the real part of $H_{j}$), $j=1,2$, are
nonnegative selfadjoint relations in $\sH$ and $B_{j}$, $j=1,2$,
are bounded selfadjoint operators in $\sH$. This yields the following factorization of $H_1+H_2$:
\[
 H_1+H_2=A_1^{\half}(I+iB_1)A_1^{\half} + A_2^{\half}(I+iB_1)A_2^{\half}= \Phi \left(I_{\sH^2}+i (B_1\oplus B_2)\right) \Phi^*,
\]
where $\Phi$ stands for the row operator (or relation) from $\sH\times \sH$ to $\sH$ formally defined by
\[
 \Phi=\begin{pmatrix}
        A_1^{\half} & A_2^{\half}
      \end{pmatrix}
\]
and whose adjoint $\Phi^*$ is the column operator (or relation) formally given by
\[
 \Phi^*=\begin{pmatrix}
        A_1^{\half} \\ A_2^{\half}
      \end{pmatrix}: \sH\to \sH\times \sH.
\]
Hence $H_1+H_2$ is a sectorial relation which admits a factorization of the form $S=T(I+iB)T^*$
with $T=\Phi$ and $B=B_1\oplus B_2$. Even in the case that $H_1$ and $H_2$ are densely defined operators,
the operator $T$ is typically neither closed nor closable; it can even be singular (cf. \cite{HSeSn2018})
if for instance $\dom A_1^\half\cap \dom A_2^\half=\{0\}$.

For some general developments on the notions of Friedrichs and Kre\u{\i}n extensions the reader is referred to see
\cite{AN,C,CS,H,HMS,Kato,Kr1} in the case of nonnegative operators and relations and \cite{Ar2,HSSW17,Kato,RB}
in the case of sectorial relations. Treatments of extremal extensions can be found in \cite{Ar,AHSS,HSSW2}, while
construction of factorizations for these extensions have been treated in \cite{AHSS,HSSW2,PS,SSi,SS,ST12} and
the notion of form sums appears in \cite{FM,HSSW1,HSSW3,ST12}.
Throughout this paper \cite{HSSW17} will be used as a standard reference for various concepts and results on sectorial relations and
their extensions; therein one can also find a more detailed description on the literature and developments in this area.
As another general overview on sectorial relations we would like to mention the survey paper of Yu.M.~Arlinski\u{\i} \cite{Ar12}.

Finally it should be pointed out that the results in Section \ref{sec2} apply in particular to the factorized nonnegative relations
of the form
\[
 S=T^*T \quad \text{or} \quad S=TT^*,
\]
where $T$ is a linear relation or operator which is not assumed to be closed.
The special case where $S=T^*T$ is a densely defined nonnegative operator and the densely defined operator
$T$ is not closed has been recently investigated in \cite{ST12}.
Similarly, the results in Section \ref{sec3} extend the earlier results concerning
the sum of nonnegative relations obtained in \cite{HSSW1} and \cite{HSSW3}.

\section{\textbf{Some characteristic properties of $T^*(I+iB)T$ and $T(I+iB)T^*$}}\label{sec2}

In this section the class of linear relations $S$ in a Hilbert space $\sH$ which admit a factorization of the form
\begin{equation}\label{Sfactored}
 S=T^*(I+iB)T \quad \textrm{or} \quad S=T(I+iB)T^*
\end{equation}
will be studied; here $B$ is a bounded operator in a Hilbert space $\sK$ and $T$ is a linear operator or a linear relation (not necessarily closed)
from  $\sH$ to $\sK$ or from $\sK$ to $\sH$, respectively.
This class contains all densely defined, not necessarily closed, sectorial relations, but also a wide class of multivalued sectorial
relations; for instance Lemma \ref{s-second} shows that all maximal sectorial relations admit a factorization of the form \eqref{Sfactored}
with $T$ a closed operator or a closed relation; see \eqref{sss}, \eqref{sss+}. Conversely, if $T$ is closed then the relation
$S$ in \eqref{Sfactored} is maximal sectorial. In the case that $T$ is not closed the relation $S$ need not be maximal sectorial,
but it has maximal sectorial extensions.

\subsection{Some basic properties of $T^*CT$}
To study operators and relations $S$ determined by the factorization  \eqref{Sfactored},
the following observations concerning products of the form $T^*CT$ are helpful.

\begin{lemma}\label{lem2.1}
Let $T$ be a relation from a Hilbert space $\sH$ to a Hilbert space $\sK$, let $C\in\mathbf{B}(\sK)$ and let
the linear relation $W$ in $\sH$ be defined as the product
\[
 W=T^*CT.
\]
Then the following statements hold:
\begin{enumerate}\def\labelenumi{\rm (\roman{enumi})}
\item If $C$ has the property
\begin{equation}\label{C1}
 (Cf,f)=0 \quad \Rightarrow \quad f=0
\end{equation}
then for each $\varphi' \in \ran W$ there is precisely one $\alpha \in \sK$
such that for any $\varphi \in \sH$ with $\{\varphi, \varphi'\} \in W$ one has
 \begin{equation}\label{einz}
 \{\varphi, \alpha\} \in T \quad \mbox{and} \quad \{C\alpha, \varphi'\} \in T^{*},
\end{equation}
in which case
\begin{equation}\label{zwei}
(\varphi',\varphi)=(C\alpha, \alpha).
\end{equation}
Moreover, for every $\{\varphi, \varphi'\} \in W$ the element $\varphi \in
\sH$ is uniquely determined modulo $\ker T$.
In particular, $W$ satisfies the following identities
\begin{equation}\label{mulker}
 \mul W=\mul T^* \;\text{ and } \;  \ker W=\ker T.
\end{equation}

\item If for any sequence $(f_n)$ the operator $C$ satisfies the property
\begin{equation}\label{C2}
 \lim_{n\to\infty}(Cf_n,f_n) = 0 \quad \Rightarrow \quad \lim_{n\to \infty} f_n=0,
\end{equation}
then the following implication is also true
\[
 T \quad \text{is closed}\quad \Rightarrow \quad W \quad \text{is closed}.
\]
In particular, the closure of $W$ satisfies $W^{**}\subset T^*CT^{**}$ and
\[
 \mul W^{**}=\mul W=\mul T^*, \quad \ker T\subset\ker W^{**}\subset \ker T^{**}.
\]
\end{enumerate}
\end{lemma}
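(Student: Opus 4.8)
The plan is to argue directly from the definition of the product of linear relations: a pair $\{\varphi,\varphi'\}$ belongs to $W=T^*CT$ exactly when there is an $\alpha\in\sK$ with $\{\varphi,\alpha\}\in T$ and $\{C\alpha,\varphi'\}\in T^*$. Granting such an $\alpha$, identity \eqref{zwei} is then immediate: pairing $\{\varphi,\alpha\}\in T$ against $\{C\alpha,\varphi'\}\in T^*$ in the defining relation of the adjoint gives $(\alpha,C\alpha)=(\varphi,\varphi')$, hence $(\varphi',\varphi)=(C\alpha,\alpha)$.

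The core of part (i) is to show that the auxiliary element $\alpha$ depends only on $\varphi'$ and not on the chosen $\varphi$; this simultaneously yields the existence of the universal witness asserted in \eqref{einz} and its uniqueness. If $\alpha_1,\alpha_2$ are witnesses for the same $\varphi'\in\ran W$ (coming from possibly different $\varphi_1,\varphi_2$), then by linearity of $T$ the difference $\alpha_1-\alpha_2$ lies in $\ran T$, while by linearity of $T^*$ one gets $\{C(\alpha_1-\alpha_2),0\}\in T^*$, i.e. $C(\alpha_1-\alpha_2)\in\ker T^*=(\ran T)^\perp$; hence $(C(\alpha_1-\alpha_2),\alpha_1-\alpha_2)=0$, and \eqref{C1} forces $\alpha_1=\alpha_2$. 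With $\alpha$ thus pinned down, \eqref{einz} holds, and the common $\alpha$ shared by $\{\varphi_1,\varphi'\},\{\varphi_2,\varphi'\}\in W$ gives $\varphi_1-\varphi_2\in\ker T$. Finally \eqref{mulker} follows by specialisation: $\mul T^*\subseteq\mul W$ and $\ker T\subseteq\ker W$ by taking $\alpha=0$, while for the reverse inclusions one uses \eqref{zwei} with $\varphi=0$ (resp. $\varphi'=0$) to obtain $(C\alpha,\alpha)=0$, hence $\alpha=0$ by \eqref{C1}, after which $\{0,\varphi'\}\in T^*$ (resp. $\{\varphi,0\}\in T$).

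For part (ii), I would first note that \eqref{C2} applied to a constant sequence yields \eqref{C1}, so part (i) is available. To prove that $W$ is closed when $T$ is, take $\{\varphi_n,\varphi'_n\}\in W$ with $\varphi_n\to\varphi$ and $\varphi'_n\to\varphi'$, and let $\alpha_n$ be the associated (unique) witnesses. Applying \eqref{zwei} to the differences $\{\varphi_n-\varphi_m,\alpha_n-\alpha_m\}\in T$ and $\{C(\alpha_n-\alpha_m),\varphi'_n-\varphi'_m\}\in T^*$ yields
\[
 (C(\alpha_n-\alpha_m),\alpha_n-\alpha_m)=(\varphi'_n-\varphi'_m,\varphi_n-\varphi_m)\to 0 \quad (n,m\to\infty).
\]
If $(\alpha_n)$ were not Cauchy, subsequences $n_k,m_k\to\infty$ with $\|\alpha_{n_k}-\alpha_{m_k}\|$ bounded away from $0$ would contradict \eqref{C2} applied to $f_k=\alpha_{n_k}-\alpha_{m_k}$; hence $\alpha_n\to\alpha$ for some $\alpha\in\sK$. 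Then closedness of $T$ gives $\{\varphi,\alpha\}\in T$, boundedness of $C$ gives $C\alpha_n\to C\alpha$, and closedness of $T^*$ gives $\{C\alpha,\varphi'\}\in T^*$, so $\{\varphi,\varphi'\}\in W$. The remaining assertions follow by applying this, together with \eqref{mulker}, to the closed relation $T^{**}$ (note $(T^{**})^*=T^*$): monotonicity of the product gives $W\subseteq T^*CT^{**}$ with the right-hand side closed, so $W^{**}\subseteq T^*CT^{**}$, and combining the resulting inclusions for multivalued parts and kernels with $\mul W\subseteq\mul W^{**}$ and $\ker W\subseteq\ker W^{**}$ produces the stated equalities and chain of inclusions.

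The step I expect to be the main obstacle is the Cauchy argument in part (ii): since \eqref{C2} is formulated only for sequences, it cannot be applied verbatim to the doubly-indexed quantity $(C(\alpha_n-\alpha_m),\alpha_n-\alpha_m)$, and one has to route it through a subsequence (or an $\varepsilon$-$N$ argument). A secondary point requiring care is keeping the identifications $\ker T^*=(\ran T)^\perp$ and $\mul T^*=(\dom T)^\perp$ consistent when extracting \eqref{mulker}.
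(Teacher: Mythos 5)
Your proposal is correct and follows essentially the same route as the paper's proof: the duality pairing of $\{\varphi,\alpha\}\in T$ against $\{C\alpha,\varphi'\}\in T^*$ for existence and uniqueness in (i), and the Cauchy argument on the witnesses $\alpha_n$ plus the sandwich $W\subset W^{**}\subset T^*CT^{**}$ in (ii). Your two points of extra care (routing the doubly-indexed condition $(C(\alpha_n-\alpha_m),\alpha_n-\alpha_m)\to0$ through a subsequence before invoking \eqref{C2}, and noting that \eqref{C2} implies \eqref{C1}) are legitimate refinements of steps the paper leaves implicit, but they do not change the argument.
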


\begin{proof}
(i) Let $\varphi' \in \ran W$. Then for any $\varphi \in \sH$ such that
$\{\varphi, \varphi'\} \in W$ there exists $\alpha \in \sK$ such that
\eqref{einz} holds and consequently \eqref{zwei} is satisfied, too.
To see the uniqueness properties of $\alpha$ and $\varphi$ assume that also $\{\varphi_{0},\varphi'\}  \in W$ with $\varphi_0 \in \sH$.
Then analogously there exists an element $\alpha_{0} \in \sK$ such that
 \[
  \{\varphi_{0}, \alpha_{0} \} \in T,
 \quad \{C\alpha_{0}, \varphi'\} \in T^{*},
\]
which via \eqref{einz} leads to
\[
\{\varphi-\varphi_{0}, \alpha-\alpha_{0} \} \in T, \quad
 \{C(\alpha-\alpha_{0}), 0\} \in T^{*}.
\]
Hence $(C(\alpha-\alpha_{0}), \alpha-\alpha_{0})=0$ and now the assumption in (i)
implies that $\alpha=\alpha_{0}$, i.e., $\alpha$ is unique.
Moreover, one concludes that $\{\varphi-\varphi_{0}, 0\} \in T$, which proves the claimed uniqueness of $\varphi$
and the equality $\ker W=\ker T$.

To see that $\mul W=\mul T^{*}$, assume that $\{0, \varphi'\} \in W$.
Then it follows from \eqref{einz} and \eqref{zwei} that $\alpha=0$,
which implies that $\mul W \subset \mul T^*$. The reverse inclusion is trivial and hence \eqref{mulker} is shown.

(ii) Assume that $T$ is closed. To see that $W$ is closed,
let $\{\varphi_n,\varphi_n'\} \in W$ converge to $\{\varphi,\varphi'\} \in \sH$.
Then there exists a sequence of vectors $\alpha_n\in \sK$ such that
\begin{equation*}
 \{\varphi_n, \alpha_n\} \in T \quad \mbox{and} \quad \{C \alpha_n, \varphi_n'\} \in T^{*},
\end{equation*}
and it follows that
\begin{equation*}
(C \alpha_n, \alpha_n)=(\varphi_n',\varphi_n)\to (\varphi', \varphi).
\end{equation*}
Consequently,
\[
 (C (\alpha_n-\alpha_m, \alpha_n-\alpha_m) \to 0, \quad n,m\to \infty,
\]
and now the assumption in (ii) shows that $(\alpha_n)$ is a Cauchy sequence in $\sK$.
Hence, $\alpha_n$ converges to some $\alpha$ in $\sK$ and
one concludes that $\{\varphi, \alpha\} \in T$ and $\{C\alpha, \varphi'\} \in T^{*}$.
Thus $\{\varphi, \varphi'\}\in W$ and $W$ is closed.

Finally, the inclusion $W\subset T^*CT^{**}$ is clearly true and since $T^{**}$ is closed,
also $T^*CT^{**}$ is closed by the property \eqref{C2}. Therefore,
\[
 W\subset W^{**}\subset T^*CT^{**}.
\]
By the statement (i) this leads to $\ker T\subset \ker W^{**}\subset \ker T^*CT^{**}=\ker T^{**}$ and
\[
 \mul T^* = \mul W \subset \mul W^{**}\subset \mul T^*CT^{**}=\mul T^*,
\]
so that $\mul W=\mul W^{**}=\mul T^*$. This completes the proof.
 \end{proof}

By changing the roles of $T$ and $T^*$ in Lemma~\ref{lem2.1} leads to the following result.

\begin{corollary}\label{cor2.2}
Let $T$ be a relation from a Hilbert space $\sK$ to a Hilbert space $\sH$, let $C\in\mathbf{B}(\sK)$ and let
the linear relation $W$ in $\sH$ be defined as the product
\[
 W=TCT^*.
\]
Then:
\begin{enumerate}\def\labelenumi{\rm (\roman{enumi})}
\item the assumption \eqref{C1} implies that $W$ satisfies the properties in part (i) in Lemma \ref{lem2.1}
with the roles of $T$ and $T^*$ interchanged.

\item If \eqref{C2} holds, then $W^{**}\subset T^{**}CT^*$ and if $T$ is closed then also $W$ is closed. Moreover,
\[
 \ker W^{**}=\ker W=\ker T^*, \quad \mul T\subset\mul W^{**}\subset \mul T^{**}.
\]
\end{enumerate}
\end{corollary}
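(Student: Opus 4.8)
The plan is to obtain Corollary~\ref{cor2.2} directly from Lemma~\ref{lem2.1} by a symmetry argument, without repeating the work. The key observation is that if one sets $R:=T^*$, then $R$ is a relation from $\sH$ to $\sK$ and $R^*=T^{**}$, so that the product $W=TCT^*$ can be rewritten as
\[
 W=R^* C R^{**}?
\]
— this is not quite right, since Lemma~\ref{lem2.1} is stated for $W=T^*CT$ with $T$ the \emph{inner} relation. The cleanest route is: put $R:=T^*$, so $R$ is a relation from $\sH$ to $\sK$, $R^*=T^{**}\supset T$, and $W=TCT^*$. If $T$ is closed, then $T=T^{**}=R^*$ and $W=R^*CR=R^*CR$ has exactly the form treated in Lemma~\ref{lem2.1} with $T$ replaced by $R$; applying part~(i) of that lemma (under \eqref{C1}) and part~(ii) (under \eqref{C2}) yields the stated conclusions with $\ker$ and $\mul$ interchanged, since $\ker R=\ker T^*$ and $\mul R^*=\mul T^{**}$. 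For the non-closed case one instead applies Lemma~\ref{lem2.1} to the closed relation $R:=T^{**}$ in place of $T$: then $R^*=T^*$ and $R^*CR^{**}=T^*CT^{**}$... again the indices need care.

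More carefully, the right substitution is the following. In Corollary~\ref{cor2.2} the ``outer'' relation is $T$ (from $\sK$ to $\sH$) and the ``inner'' relation is $T^*$ (from $\sH$ to $\sK$). In Lemma~\ref{lem2.1} the ``outer'' relation is $T^*$ and the ``inner'' relation is $T$. So to pass from one to the other one replaces the symbol ``$T$'' of Lemma~\ref{lem2.1} by the relation ``$T^*$'' of the corollary; then ``$T^*$'' of the lemma becomes ``$(T^*)^*=T^{**}$''. Thus Lemma~\ref{lem2.1} applied to the inner relation $T^*$ gives statements about $W=(T^*)^*CT^*=T^{**}CT^*$, and:
\begin{itemize}
\item[] part~(i) gives, under \eqref{C1}, that $\mul W=\mul(T^*)^*=\mul T^{**}$ and $\ker W=\ker T^*$, together with the uniqueness statements \eqref{einz}--\eqref{zwei} with the roles of $T$ and $T^*$ interchanged;
\item[] part~(ii) gives, under \eqref{C2}, that $T^*$ closed $\Rightarrow$ $W$ closed, and $W^{**}\subset(T^*)^*C(T^*)^{**}=T^{**}CT^{***}=T^{**}CT^*$, with $\mul T^{**}=\mul W=\mul W^{**}$ and $\ker T^*\subset\ker W^{**}\subset\ker T^{***}=\ker T^*$, the last equality forcing $\ker W^{**}=\ker W=\ker T^*$.
\end{itemize}
Note $T^*$ is automatically closed, so the hypothesis ``$T^*$ closed'' in part~(ii) is always met; the additional assumption in the corollary that ``$T$ is closed'' is only needed to get that $W=TCT^*$ itself (and not merely $T^{**}CT^*$) is closed, via $T=T^{**}$.

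The only real content beyond bookkeeping is checking that $W=TCT^*$ indeed equals $T^{**}CT^*$ whenever we want to invoke the closedness half — this is immediate because $T\subset T^{**}$ always, hence $TCT^*\subset T^{**}CT^*$, and the reverse inclusion needs $T^{**}\subset T$, i.e.\ $T$ closed. Thus the main (and essentially only) obstacle is to state the adjoint identities $\mul T^*=\cdot$, $\ker T^*=\cdot$ in the correct places and to verify $T^{***}=T^*$, $(T^*)^*=T^{**}$; these are standard facts about adjoints of linear relations. So the proof reduces to a single sentence invoking Lemma~\ref{lem2.1} with $T$ there replaced by $T^*$ here, and then reading off \eqref{mulker} and the display in part~(ii) after the substitution. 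No new estimates, no new limiting arguments — it is purely a relabelling, which is exactly why the authors phrase it as ``changing the roles of $T$ and $T^*$''.
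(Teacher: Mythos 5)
Your overall strategy --- reduce the corollary to Lemma \ref{lem2.1} by the substitution $T\mapsto T^{*}$ --- is also the paper's strategy, but your execution silently identifies $W=TCT^{*}$ with $T^{**}CT^{*}$, and these are different relations unless $T$ is closed. Substituting $T^{*}$ for $T$ in Lemma \ref{lem2.1} yields statements about $(T^{*})^{*}CT^{*}=T^{**}CT^{*}$ only; since $T\subset T^{**}$ one gets $W\subset T^{**}CT^{*}$ for free, but not the reverse inclusion, so the conclusions of the substituted lemma do not transfer automatically to $W$ itself. Concretely this breaks your multivalued-part claims: you assert $\mul W=\mul W^{**}=\mul T^{**}$, whereas the correct statement (and the one the corollary actually makes) is $\mul W=\mul T$ together with the sandwich $\mul T=\mul W\subset\mul W^{**}\subset\mul (T^{**}CT^{*})=\mul T^{**}$, in which the inclusions may be strict. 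For instance, if $T$ is a densely defined non-closable operator, then $\mul W=\mul T=\{0\}$ while $\mul T^{**}\ne\{0\}$, so your equality fails. Note the asymmetry built into the corollary: the quantity attached to the automatically closed factor $T^{*}$, namely $\ker W=\ker W^{**}=\ker T^{*}$, is pinned down exactly, while the quantity attached to the non-closed factor $T$ is only sandwiched; your version erases that asymmetry.

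The same conflation leaves part (i) unproved: it is a statement about $W=TCT^{*}$ itself (uniqueness of $\alpha$ with $\{\varphi,\alpha\}\in T^{*}$ and $\{C\alpha,\varphi'\}\in T$, the identity $(\varphi',\varphi)=(C\alpha,\alpha)$, and $\mul W=\mul T$, $\ker W=\ker T^{*}$), and it cannot be read off from the lemma applied to $T^{*}$, which speaks about $T^{**}CT^{*}$. What the paper does is re-run the \emph{proof} of Lemma \ref{lem2.1}(i) with the roles of $T$ and $T^{*}$ interchanged; the only point requiring attention is that the duality step still yields $(C(\alpha-\alpha_{0}),\alpha-\alpha_{0})=0$, which it does because $\{C(\alpha-\alpha_{0}),0\}\in T\subset (T^{*})^{*}$ pairs against $\{\varphi-\varphi_{0},\alpha-\alpha_{0}\}\in T^{*}$. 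Once part (i) is in place, part (ii) goes through essentially as you and the paper both indicate: $T^{*}$ is closed, so $T^{**}CT^{*}$ is closed by the substituted Lemma \ref{lem2.1}(ii); hence $W\subset W^{**}\subset T^{**}CT^{*}$, the kernels collapse to $\ker W=\ker W^{**}=\ker T^{*}$, and the multivalued parts are sandwiched as above. Your observation that the hypothesis ``$T$ closed'' is needed only to force $W=T^{**}CT^{*}$, and hence the closedness of $W$, is correct.
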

\begin{proof}
(i) This assertion is proved by interchanging the roles of $T$ and $T^*$ in the proof of Lemma~\ref{lem2.1}.

(ii) The statement with $T$ closed is obtained by applying part (ii) of Lemma~\ref{lem2.1} to $T^*$ instead of $T$.
As to the remaining assertions observe that $W\subset W^{**}\subset T^{**}CT^*$ and hence
$\mul T=\mul W\subset \mul W^{**}\subset \mul T^{**}CT^*=\mul T^{**}$. Moreover,
\[
 \ker T^* = \ker W \subset \ker W^{**}\subset \ker T^*CT^{**}=\ker T^*,
\]
and thus $\ker W=\ker K^{**}=\ker T^*$.
\end{proof}

In particular, all (positively or negatively) definite operators $C$ satisfy the assumption (i) in Lemma \ref{lem2.1}
and all uniformly definite operators $C$ satisfy the assumption (ii) in Lemma \ref{lem2.1}. Of course there are many
other operators where assumption (i) or (ii) in Lemma \ref{lem2.1} is satisfied.
Notice that if $C$ satisfies the assumption (i) or (ii) in Lemma \ref{lem2.1}, then the same is true also for the following operators
\[
 C^*;\quad \eta\, C\,\,(0\neq \eta \in \dC); \quad X^*CX,
\]
where $X$ is a bounded operator with bounded inverse.
In the present paper Lemma \ref{lem2.1} is applied to a special class of sectorial relations.

\begin{proposition}\label{SectorialCor}
Let $T$ be a linear relation and let $C=I+iB$ for some selfadjoint operator $B\in\mathbf{B}(\sK)$.
Then
\[
 S=T^{*}(I+iB)T \quad \text{and} \quad  S'=T(I+iB)T^*,
\]
with $T$ from $\sH$ to $\sK$ or from $\sK$ to $\sH$, respectively, are sectorial relations in $\sH$ with
vertex at the origin and semi-angle at most
$\arctan \|B\|$, and $S$ admits the properties (i) and (ii) in Lemma \ref{lem2.1} while $S'$ admits the properties in
Corollary \ref{cor2.2}.

If, in addition, the relation $T$ is closed, i.e. $T=T^{**}$, then $S$ and $S'$ as well as their adjoints are maximal sectorial with
\[
 S^*=T^*(I-iB)T, \quad (S')^*=T(I-iB)T^*.
\]
\end{proposition}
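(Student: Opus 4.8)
The plan is to treat $S=T^*(I+iB)T$ in detail and then obtain the statements for $S'=T(I+iB)T^*$ by the symmetry already recorded in Corollary~\ref{cor2.2} (i.e. by interchanging the roles of $T$ and $T^*$). First I would verify that $C=I+iB$ satisfies assumptions \eqref{C1} and \eqref{C2} of Lemma~\ref{lem2.1}: since $B$ is selfadjoint, $(Cf,f)=\|f\|^2+i(Bf,f)$, whose real part is $\|f\|^2$; hence $(Cf,f)=0$ forces $f=0$, and $(Cf_n,f_n)\to 0$ forces $\|f_n\|^2\to 0$. So both \eqref{C1} and \eqref{C2} hold, and therefore $S$ inherits the properties (i) and (ii) of Lemma~\ref{lem2.1}; likewise $S'$ inherits the properties in Corollary~\ref{cor2.2}. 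This also disposes of the parenthetical remarks in the paragraph preceding the proposition, since $I\pm iB$ and $X^*(I+iB)X$ are of the stated forms.

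Next I would establish sectoriality. Take $\{h,h'\}\in S=T^*(I+iB)T$. By part~(i) of Lemma~\ref{lem2.1} there is an $\alpha\in\sK$ with $\{h,\alpha\}\in T$ and $\{(I+iB)\alpha,h'\}\in T^*$, and the identity \eqref{zwei} gives
\[
 (h',h)=((I+iB)\alpha,\alpha)=\|\alpha\|^2+i(B\alpha,\alpha).
\]
Thus $\RE(h',h)=\|\alpha\|^2\ge 0$ and $|\IM(h',h)|=|(B\alpha,\alpha)|\le\|B\|\,\|\alpha\|^2=\|B\|\,\RE(h',h)$, which is exactly the sectoriality estimate with vertex $0$ and semi-angle $\alpha_0=\arctan\|B\|$, i.e. $\tan\alpha_0=\|B\|$. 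The same computation applied via Corollary~\ref{cor2.2}(i) gives the corresponding estimate for $S'$. (When $\|B\|=0$ one reads this as $S,S'$ being nonnegative.)

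For the last assertion, assume $T=T^{**}$ is closed. Since $C=I+iB$ is boundedly invertible (its numerical range lies in $\{\RE z=1\}$, so $\|(I+iB)^{-1}\|\le 1$), the product $T^*(I+iB)T$ with $T$ closed is a standard closed relation; more directly, part~(ii) of Lemma~\ref{lem2.1} with $T=T^{**}$ gives that $W=S$ is closed and equals $T^*(I+iB)T^{**}$, the maximal sectorial extension named $H$ in the introduction, so $S$ is maximal sectorial. It remains to compute $S^*$. The key algebraic step is the adjoint formula for a triple product: for $T$ closed and $C$ boundedly invertible one has $(T^*CT)^*=T^*C^*T$. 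One obtains ``$\supseteq$'' immediately from $(T^*CT)^*\supseteq T^*C^*T$ (adjoints reverse products and include; here $(C^*)^*=C$ and $(T^*)^*=T$ since $T$ is closed), and the reverse inclusion ``$\subseteq$'' follows because $T^*C^*T=T^*(I-iB)T$ is itself maximal sectorial by the first part of the proposition (applied with $B$ replaced by $-B$), and a maximal sectorial relation contained in the sectorial relation $(T^*CT)^*$ — which is sectorial since $T^*CT$ is — must equal it. This yields $S^*=T^*(I-iB)T$, and interchanging $T$ and $T^*$ gives $(S')^*=T(I-iB)T^*$; being adjoints of maximal sectorial relations, $S^*$ and $(S')^*$ are maximal sectorial as well.

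The main obstacle is the adjoint identity $(T^*CT)^*=T^*C^*T$ for possibly non-closable $T$ and multivalued $T$: the inclusion that needs care is ``$\subseteq$'', and the cleanest route is precisely the maximality argument just described (a maximal sectorial relation sitting inside a sectorial relation forces equality), which avoids any delicate direct manipulation of the products; invertibility of $I\pm iB$ is what makes all these products behave. Everything else is bookkeeping with Lemma~\ref{lem2.1} and Corollary~\ref{cor2.2}.
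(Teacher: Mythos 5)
Your route coincides with the paper's in all the places where an actual argument is given: you verify that $C=I+iB$ satisfies \eqref{C1} and \eqref{C2} because $\RE(Cf,f)=\|f\|^2$, you obtain the sectoriality estimate from the element $\alpha$ of Lemma~\ref{lem2.1}(i) via $(h',h)=\|\alpha\|^2+i(B\alpha,\alpha)$, you get closedness of $S$ from Lemma~\ref{lem2.1}(ii), and you derive $S^*=T^*(I-iB)T$ from the easy inclusion $(T^*CT)^*\supset T^*C^*T$ plus maximality of the right-hand side. The genuine gap is the step ``$S$ is closed and equals $T^*(I+iB)T^{**}$, the maximal sectorial extension named $H$ in the introduction, so $S$ is maximal sectorial.'' A closed sectorial relation need not be maximal sectorial (a closed nonnegative symmetric operator with nonzero defect numbers is the standard counterexample), and the introduction's assertion that $H=T^*(I+iB)T^{**}$ is maximal sectorial is precisely the claim at issue; it is nowhere established earlier in the paper. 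The paper itself does not prove it either: it cites \cite{HS2019} for the maximality of $S$ and $S'$ when $T$ is closed. So at this point you must either cite that reference, as the paper does, or supply an argument (for instance via the first representation theorem: for closed $T$ the form $((I+iB)T_{\rm s}h,T_{\rm s}k)$ on $\dom T$ is closed and sectorial, and one identifies the associated maximal sectorial relation with $T^*(I+iB)T$, in the spirit of Lemma~\ref{s-second}).

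A second, smaller flaw: in the adjoint computation the parenthetical ``$(T^*CT)^*$ is sectorial since $T^*CT$ is'' is not a valid justification, because taking adjoints does not preserve sectoriality (again, the adjoint of a closed nonnegative symmetric operator is in general not sectorial). What makes the argument work is that at that stage $S=T^*CT$ is already known to be \emph{maximal} sectorial, and the adjoint of a maximal sectorial relation is maximal sectorial; this is exactly what the paper invokes (``then also their adjoints are maximal sectorial''). With that correction, your maximality argument forcing $S^*=T^*(I-iB)T$, and the passage to $(S')^*$ by interchanging $T$ and $T^*$, are the same as in the paper.
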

\begin{proof}
Since $B$ is selfadjoint one concludes that for all $\{\varphi, \varphi'\} \in S$:
\[
 | \IM (\varphi',\varphi) |=|(B \alpha, \alpha)|
 \leq \|B\| (\alpha, \alpha)=\|B\| \RE (\varphi',\varphi);
\]
cf. the beginning of the proof of Lemma \ref{lem2.1}.
Hence $S$ is sectorial with vertex at the origin and semi-angle at most
$\arctan \|B\|$. The argument concerning $S'$ remains the same.

The properties for $S$ in Lemma \ref{lem2.1} and for $S'$ in Corollary \ref{cor2.2} follow from that fact
that the real part of $C=I+iB$ as the identity operator is boundedly invertible.

Finally, if $T$ is closed then also $S=T^*(I+iB)T$ and $S'=T(I+iB)T^*$ are closed by Lemma \ref{lem2.1}.
The fact that $S$, $S'$ are maximal sectorial can be found in \cite{HS2019}.
Then also their adjoints are maximal sectorial and since
$S^*=(T^*(I+iB)T)^*\supset T^*(I-iB)T$, where $T^*(I-iB)T$ is maximal sectorial (again see \cite{HS2019}),
equality $S^*=T^*(I-iB)T$ prevails. The equality $(S')^*=T(I-iB)T^*$ is now obtained by changing the roles of $T$ and $T^*$.
\end{proof}

It is a consequence of Lemma \ref{s-second} that a set $\cD$ is a core for the form $\st_H$ precisely
when $\cD$ is a core for its real part $(\st_H)_r$.
This observation combined with Lemmas~\ref{kak0}, \ref{s-second}, and \ref{lem2.1} leads to
a characterization concerning the factorization \eqref{Sfactored} of $S$ and its Friedrichs extension $S_F$.

\begin{theorem}\label{SandS_F}
Let $S$ be a not necessarily closed sectorial relation in the Hilbert space $\sH$.
Then the following assertions are equivalent:
\begin{enumerate}\def\labelenumi{\rm (\roman{enumi})}
\item $\mul S=\mul S^{*}$;
\item  there exists a Hilbert space $\sK$, a linear relation $T:\sH\to \sK$ with $\dom T=\dom S$ and a selfadjoint operator $B\in \mathbf{B}(\sK)$,
such that
\begin{equation}\label{sfactor}
 S=T^*(I+iB)T    \; \text{ and }\; S_F=T^*(I+iB)T^{**}.
\end{equation}
\end{enumerate}
Moreover, in (ii) $T:\sH\to \sK$ can be assumed to be a closable operator.
\end{theorem}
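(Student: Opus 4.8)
The plan is to prove the two implications of the equivalence, and then to establish the final remark by showing that the Hilbert space $\sK$ and the relation $T$ constructed in (i)$\Rightarrow$(ii) can already be taken with $T$ a closable operator. The implication (ii)$\Rightarrow$(i) should be the easy direction: if $S=T^*(I+iB)T$ with $B\in\mathbf{B}(\sK)$ selfadjoint, then $C=I+iB$ satisfies \eqref{C1} (its real part is the identity), so Lemma~\ref{lem2.1}(i) applies and gives $\mul S=\mul T^*$. By Proposition~\ref{SectorialCor} the adjoint is $S^*=T^*(I-iB)T^{**}$ once one knows the factorized description; more directly, applying Lemma~\ref{lem2.1}(i) to the relation $S^*$, which admits the factorization $S^*\supset T^*(I-iB)T$ with the same closability data, yields $\mul S^*=\mul T^*$ as well, whence $\mul S=\mul S^*$. (One must be a little careful to use the factorization of $S^*$ only after it is legitimized; alternatively one notes that $\mul S^*=(\dom S)^\perp=(\dom T)^\perp=\mul T^{**}=\mul T^*$ when $T$ is a closable operator, which is exactly the situation in the ``moreover'' part.)

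For the main implication (i)$\Rightarrow$(ii), the idea is to build $T$ and $B$ out of the Friedrichs extension via Lemma~\ref{s-second}. Concretely, let $\st_{S_F}$ be the closure of the form $\st_S$ (Lemma~\ref{kak0}), let $(\st_{S_F})_r$ be its real part with associated nonnegative selfadjoint relation $A:=(S_F)_r$, and let $B\in\mathbf{B}(\sH)$ be the selfadjoint operator furnished by Lemma~\ref{s-second}, so that
\[
 S_F=A^{\half}(I+iB)A^{\half},\qquad \dom\st_{S_F}=\dom A^{\half}.
\]
Now set $T_0:=A^{\half}\uphar\dom S$, i.e. the restriction of the (closable) operator $(A^{\half})_{\rm s}=:A^{\half}_{\rm s}$ to $\dom S$. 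Since $\dom S$ is a core for the closed form $\st_{S_F}$, it is also a core for $(\st_{S_F})_r$, hence a core for $A^{\half}_{\rm s}$, which means $T_0^{**}=A^{\half}_{\rm s}$ (up to the multivalued part of $A^{\half}$, which one handles by working with the operator part; this is where the hypothesis $\mul S=\mul S^*$ enters, guaranteeing $\mul S=\mul S^*=\mul A^{\half}$ so that no information is lost when passing to $A^{\half}_{\rm s}$). Then one checks that
\[
 T_0^*(I+iB)T_0^{**}=A^{\half}_{\rm s}(I+iB)A^{\half}_{\rm s}=(S_F)_{\rm s},
\]
and that $T_0^*(I+iB)T_0$, whose domain is $\dom T_0=\dom S$, coincides with $S$ on that domain. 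Taking $T:=T_0$ (a closable operator from $\sH$ into $\sK:=\sH$) and the above $B$ then yields \eqref{sfactor}, and the ``moreover'' is automatic since $T$ is by construction a closable operator.

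The step I expect to be the main obstacle is the verification that $T_0^*(I+iB)T_0=S$ rather than merely an extension or restriction of $S$ — equivalently, that the factorization recovers $S$ exactly and not only its Friedrichs extension. Here one uses Lemma~\ref{lem2.1}(i): for $\{\varphi,\varphi'\}\in T_0^*(I+iB)T_0$ there is a unique $\alpha$ with $\{\varphi,\alpha\}\in T_0$ and $\{(I+iB)\alpha,\varphi'\}\in T_0^*$, and $(\varphi',\varphi)=((I+iB)\alpha,\alpha)$; comparing with the form identity $\st_S[\varphi,\varphi]=((I+iB)A^{\half}_{\rm s}\varphi,A^{\half}_{\rm s}\varphi)$ for $\varphi\in\dom S$, and using that $A^{\half}_{\rm s}\varphi=\alpha$, shows the two relations agree as sets of pairs over the common domain $\dom S$. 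The subtlety is the bookkeeping with multivalued parts: one must confirm $\mul(T_0^*(I+iB)T_0)=\mul T_0^*=(\dom S)^\perp=\mul S^*=\mul S$, and it is precisely the assumed equality $\mul S=\mul S^*$ that makes this chain close up, so that $S$ and $T_0^*(I+iB)T_0$ have the same multivalued part and the same graph.
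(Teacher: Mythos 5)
Your argument for (i)$\Rightarrow$(ii) is essentially the paper's own: the paper also takes $B$ from Lemma~\ref{s-second} applied to $S_F$ and sets $T=((S_F)_r^{\half})_{\rm s}\uphar\dom S$, uses the core property of $\dom S$ to get $T^{**}=((S_F)_r^{\half})_{\rm s}$, and uses $\mul S=\mul S^*$ for the multivalued bookkeeping; the ``moreover'' clause is then read off from the construction exactly as you say. Two points need repair, though. First, in (ii)$\Rightarrow$(i) your primary route --- applying Lemma~\ref{lem2.1}(i) to $S^*$ through the inclusion $S^*\supset T^*(I-iB)T$ --- does not give $\mul S^*$: the lemma computes $\mul\bigl(T^*(I-iB)T\bigr)$, and the multivalued part of the \emph{larger} relation $S^*$ could a priori be strictly bigger. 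Your parenthetical alternative is the correct one, but it should read $\mul S^*=(\dom S)^\perp=(\dom T)^\perp=\mul T^*$; the inserted ``$=\mul T^{**}$'' is false in general ($\mul T^{**}=\{0\}$ for a closable nondensely defined operator, while $(\dom T)^\perp\neq\{0\}$), and no closability is needed --- the chain holds for any relation $T$ with $\dom T=\dom S$, so together with $\mul S=\mul T^*$ from Lemma~\ref{lem2.1}(i) it yields (i). (The paper argues differently, deducing $\mul S_F=\mul S^*$ from the core property and $\mul S_F=\mul T^*$ from Lemma~\ref{lem2.1}; both routes are fine.) Second, in (i)$\Rightarrow$(ii) the adjoint of the nondensely defined operator $T_0=A^{\half}_{\rm s}\uphar\dom S$ is the full selfadjoint relation $T_0^*=A^{\half}=(S_F)_r^{\half}$ with $\mul T_0^*=(\dom S)^\perp$, \emph{not} the operator $A^{\half}_{\rm s}$; consequently $T_0^*(I+iB)T_0^{**}$ equals $S_F$ itself rather than $(S_F)_{\rm s}$, which is precisely what \eqref{sfactor} demands, and the identification $\mul T_0^*=(\dom S)^\perp=\mul S^*=\mul S=\mul S_F$ is exactly where the hypothesis enters, as you anticipated.
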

\begin{proof}
(i) $\Rightarrow$ (ii) Assume that $S$ is a sectorial relation such that $\mul S=\mul S^*$.
Let $S_F$ be the Friedrichs extension of $S$ associated with the closure of the form $\st_S$ defined in Lemma~\ref{kak0}.
By Lemma \ref{s-second} $S_F$ admits the factorization \eqref{sss} with $(S_F)_r^{\half}$ and $B\in \mathbf{B}(\sH)$,
while its operator part is factorized as in \eqref{sss+} using the operator part of $(S_F)_r^{\half}$.
Now introduce the operator $T$ as the following restriction:
\begin{equation}\label{Tconstr}
 T:=((S_F)_r^{\half})_{\rm s}\upharpoonright \dom S.
\end{equation}
Recall that $\dom S$ is a core for the forms $\st_{S_F}$ and $(\st_{S_F})_r$. Consequently, $\dom S$ is also a core
for the operator part, i.e., $\clos T=((S_F)_r^{\half})_{\rm s}$. In particular, $T$ is closable.
Moreover,
\begin{equation}\label{Tconstr*}
 T^*=(((S_F)_r^{\half})_{\rm s})^*=(S_F)_r^{\half},
\end{equation}
where the adjoint is taken in $\sH$; notice that $(\dom T)^\perp=\mul S_F=\mul (S_F)_r=\mul (S_F)_r^{\half}$.

We claim that $S=T^*(I+iB)T$. In fact, by the definition of $T$ one has $(\dom S)^\perp=(\dom T)^\perp=\mul T^*$ and hence
the assumption $\mul S=\mul S^*$ yields
\[
 \mul S=\mul T^*=\mul S_F.
\]
This identity combined with the inclusion $S\subset S_F$ and the identities \eqref{Tconstr} and \eqref{Tconstr*} shows
that
\[
 S=\{ \{f,f'\}\in S_F: f\in \dom S\} = T^*(I+iB) T.
\]

(ii) $\Rightarrow$ (i)
By Proposition \ref{SectorialCor} every relation $S$ of the form \eqref{sfactor} is sectorial.
Clearly,
\[
 S\subset S^{**}\subset T^*(I+iB)T^{**},
\]
and by the assumption $S_F=T^*(I+iB)T^{**}$. Since the domain of $S$ is a core for the closed form $\st_{S_F}$,
one has $\mul S_F=\mul S^*$. On the other hand, by Lemma \ref{lem2.1}~(i) (cf. Proposition \ref{SectorialCor}) $S$ and $S_F$ in
\eqref{sfactor} satisfy $\mul S=\mul T^*$ and $\mul S_F=\mul T^*$. Therefore, $\mul S=\mul S^*$ holds.

The last assertion is clear from the proof (i) $\Rightarrow$ (ii).
\end{proof}

In the case that $S$ is densely defined Theorem~\ref{SandS_F} gives the following result.

\begin{corollary}\label{SebTarCor}
Let $S$ be a densely defined sectorial operator in the Hilbert space $\sH$.
Then there exists a Hilbert space $\sK$, a closable operator $T:\sH\to \sK$ with $\dom T=\dom S$ and a selfadjoint operator $B\in \mathbf{B}(\sK)$,
such that
\[
 S=T^*(I+iB)T    \; \text{ and }\; S_F=T^*(I+iB)T^{**}.
\]
\end{corollary}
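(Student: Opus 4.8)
The statement to prove is Corollary~\ref{SebTarCor}, which says that a densely defined sectorial operator $S$ admits a factorization $S=T^*(I+iB)T$ with $T$ closable and $\dom T = \dom S$, and moreover $S_F = T^*(I+iB)T^{**}$.

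\medskip

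The plan is to derive this directly from Theorem~\ref{SandS_F}, which has just been proved. The only thing to check is that the hypothesis (i) of that theorem, namely $\mul S = \mul S^*$, is automatically satisfied when $S$ is a densely defined operator. First I would observe that since $S$ is (single-valued) operator, $\mul S = \{0\}$. Next, since $\dom S$ is dense in $\sH$, the adjoint $S^*$ is also an operator, so $\mul S^* = \{0\}$ as well; indeed $\mul S^* = (\dom S)^\perp = \{0\}$. Hence $\mul S = \mul S^* = \{0\}$, and condition (i) of Theorem~\ref{SandS_F} holds. Applying the implication (i) $\Rightarrow$ (ii) of that theorem yields a Hilbert space $\sK$, a linear relation $T : \sH \to \sK$ with $\dom T = \dom S$, and a selfadjoint $B \in \mathbf{B}(\sK)$ such that $S = T^*(I+iB)T$ and $S_F = T^*(I+iB)T^{**}$. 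The final clause of Theorem~\ref{SandS_F} guarantees that $T$ may be taken to be a closable operator, which is exactly what is claimed here.

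\medskip

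Since essentially all the work has already been done, there is no real obstacle; the only point requiring a line of justification is the elementary fact that a densely defined operator has $\mul S^* = \{0\}$ (so that $S$ falls into the scope of Theorem~\ref{SandS_F}), and that $\dom T = \dom S$ is preserved, which is part of the conclusion of that theorem. One could also remark, for the reader's convenience, that in the construction used in the proof of Theorem~\ref{SandS_F} the operator $T$ is the restriction of $((S_F)_r^{\half})_{\rm s}$ to $\dom S$, and that $T^* = (S_F)_r^{\half}$; in the densely defined case $(S_F)_r^{\half}$ is itself a densely defined operator, so the factorization is one of genuine operators rather than relations. I would keep the proof to two or three sentences, simply citing Theorem~\ref{SandS_F} and noting the verification $\mul S = \mul S^* = \{0\}$.
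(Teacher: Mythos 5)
Your proof is correct and follows essentially the same route as the paper: the paper also reduces the corollary to Theorem~\ref{SandS_F} by noting that density of $\dom S$ forces $\mul S^{*}=(\dom S)^{\perp}=\{0\}$, hence $\mul S=\mul S^{*}$, and then invokes the final clause of that theorem to take $T$ closable. The extra remarks about the explicit construction of $T$ are consistent with the paper but not needed.
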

\begin{proof}
If $S$ is densely defined, then $\mul S\subset \mul S^*=(\dom S)^{\perp}=\{0\}$ and now the statement follows from Theorem~\ref{SandS_F}.
\end{proof}

Corollary  \ref{SebTarCor} extends \cite[Theorem 5.3]{ST12}:
if $S\geq 0$ is a densely defined operator then there is a closable operator $T$ in $\sH$ such that
\[
 S=T^*T  \; \text{ and }\; S_F=T^*T^{**};
\]
in \cite{ST12} these factorizations for $S\geq 0$ were constructed in another way.

Theorem~\ref{SandS_F} involves the Friedrichs extension $S_F$ of $S$. There is a similar result for the
Kre\u{\i}n extension $S_K$ of $S$. The Kre\u{\i}n extension in the nonnegative case was introduced and studied in \cite{Kr1}.
Following the approach used in the nonnegative case in \cite{AN,CS} this extension is defined for a sectorial relation $S$
using the inverse $S^{-1}$ by the formula
\[
 S_K=((S^{-1})_F)^{-1};
\]
cf. \cite[Definition 2]{Ar}, \cite[Definition 7.4]{HSSW17}. This leads to the following analog of Theorem~\ref{SandS_F}.

\begin{theorem}\label{SandS_K}
Let $S$ be a not necessarily closed sectorial relation in the Hilbert space $\sH$.
Then the following assertions are equivalent:
\begin{enumerate}\def\labelenumi{\rm (\roman{enumi})}
\item $\ker S=\ker S^{*}$;
\item  there exists a Hilbert space $\sK$, a linear relation $T:\sK\to \sH$ with $\ran T=\ran S$ and a selfadjoint operator $B\in \mathbf{B}(\sK)$,
such that
\begin{equation}\label{sfactorSK}
 S=T(I+iB)T^*    \; \text{ and }\; S_K=T^{**}(I+iB)T^*.
\end{equation}
\end{enumerate}
Moreover, in (ii) the inverse $T^{-1}:\sH\to \sK$ can be assumed to be a closable operator.
\end{theorem}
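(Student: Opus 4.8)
The plan is to deduce Theorem~\ref{SandS_K} from Theorem~\ref{SandS_F} by passing to inverses, exploiting the duality between the conditions $\mul S=\mul S^*$ and $\ker S=\ker S^*$ together with the definition $S_K=((S^{-1})_F)^{-1}$. First I would note that $S$ is sectorial if and only if $S^{-1}$ is sectorial, and that the relation $S^{-1}$ satisfies $(S^{-1})^*=(S^*)^{-1}$; hence $\mul S^{-1}=\ker S$ and $\mul (S^{-1})^*=\mul (S^*)^{-1}=\ker S^*$. Consequently the hypothesis $\ker S=\ker S^*$ in (i) is \emph{exactly} the hypothesis $\mul (S^{-1})=\mul (S^{-1})^*$ of Theorem~\ref{SandS_F} applied to the sectorial relation $S^{-1}$.

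Next I would apply Theorem~\ref{SandS_F} to $S^{-1}$: there exist a Hilbert space $\sK$, a closable operator $R:\sH\to\sK$ with $\dom R=\dom S^{-1}=\ran S$, and a selfadjoint $B\in\mathbf B(\sK)$ with
\[
 S^{-1}=R^*(I+iB)R \quad\text{and}\quad (S^{-1})_F=R^*(I+iB)R^{**}.
\]
Now invert. Set $T:=R^{-1}:\sK\to\sH$, so that $R=T^{-1}$, $R^*=(T^{-1})^*=(T^*)^{-1}$, and $R^{**}=(T^{-1})^{**}=(T^{**})^{-1}$; here I use the standard identities for adjoints and closures of inverses of linear relations. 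Inverting the two displayed factorizations and using $(ABC)^{-1}=C^{-1}B^{-1}A^{-1}$ for relations (valid since $I+iB$ is boundedly invertible, with $(I+iB)^{-1}$ again of the form $I+i\wt B$ up to a bounded boundedly invertible factor, or more simply by direct computation on graphs) gives
\[
 S=(S^{-1})^{-1}=\big(R^*(I+iB)R\big)^{-1}=R^{-1}(I+iB)^{-1}(R^*)^{-1}=T(I+iB)^{-1}T^*,
\]
and likewise
\[
 S_K=\big((S^{-1})_F\big)^{-1}=\big(R^*(I+iB)R^{**}\big)^{-1}=T^{**}(I+iB)^{-1}T^*.
\]
Finally I would absorb the bounded boundedly invertible operator: write $(I+iB)^{-1}=(I+B^2)^{-1/2}\,(I+i\wt B)\,(I+B^2)^{-1/2}$ with $\wt B=-(I+B^2)^{-1/2}B(I+B^2)^{-1/2}$ selfadjoint (a routine computation with the selfadjoint operator $B$), and then replace $T$ by $T\,(I+B^2)^{-1/2}$; since $X:=(I+B^2)^{-1/2}$ is bounded with bounded inverse, this change does not affect $\ran T=\ran S$, closability of $T^{-1}$, nor the closedness/closure relations, and it brings both identities into the required form $S=T(I+iB)T^*$, $S_K=T^{**}(I+iB)T^*$ after renaming. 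The condition $\ran T=\ran S$ follows from $\dom R=\ran S$ since $\ran T=\ran R^{-1}=\dom R$, and the closability of $T^{-1}=R$ is inherited directly. This establishes (i)$\Rightarrow$(ii).

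For the converse (ii)$\Rightarrow$(i) I would run the same inversion in the other direction: given the factorizations \eqref{sfactorSK}, pass to inverses to obtain $S^{-1}=(T^*)^{-1}(I+iB)^{-1}T^{-1}$ and, after the same bounded-boundedly-invertible absorption, a presentation of $S^{-1}$ of the form $R^*(I+i\wt B)R$ with $(S^{-1})_F=R^*(I+i\wt B)R^{**}$, where $R=T^{-1}$ (composed with $X^{-1}$); then Theorem~\ref{SandS_F}~(ii)$\Rightarrow$(i) applied to $S^{-1}$ gives $\mul S^{-1}=\mul(S^{-1})^*$, i.e. $\ker S=\ker S^*$. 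The last sentence of the statement — that $T^{-1}$ may be taken closable — is then immediate from the construction, exactly as the closability clause in Theorem~\ref{SandS_F}. The main obstacle I anticipate is purely bookkeeping rather than conceptual: one must be careful that all the inverse/adjoint/closure manipulations $(S^{-1})^*=(S^*)^{-1}$, $(T^{-1})^{**}=(T^{**})^{-1}$, and $(ABC)^{-1}=C^{-1}B^{-1}A^{-1}$ are being used in the generality of (possibly nonclosed, multivalued) linear relations, where they do hold, and that the reshuffling of the bounded factor $(I+iB)^{-1}$ into the canonical form $I+i\wt B$ is done consistently in both the expression for $S$ and the expression for $S_K$ so that the \emph{same} $T$ and the \emph{same} $B$ appear in both; granting these standard facts, the proof is a short and direct dualization of Theorem~\ref{SandS_F}.
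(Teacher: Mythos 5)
Your proof follows essentially the same route as the paper's: both reduce to Theorem~\ref{SandS_F} applied to the sectorial relation $S^{-1}$, using $\mul S^{-1}=\ker S$, $\mul (S^{-1})^*=\ker S^*$ and the definition $S_K=((S^{-1})_F)^{-1}$, then invert the two factorizations and absorb the bounded boundedly invertible factor $(I+B^2)^{-1/2}$ into $T$. One small slip: your explicit formula $\wt B=-(I+B^2)^{-1/2}B(I+B^2)^{-1/2}$ is not correct; since $B$ commutes with $(I+B^2)^{\pm 1/2}$ one has simply $(I+iB)^{-1}=(I+B^2)^{-1/2}(I-iB)(I+B^2)^{-1/2}$, so the middle selfadjoint operator is just $-B$ (this is exactly what the paper uses, and it is what makes the \emph{same} $B$ appear in both displayed identities after renaming). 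This does not affect the validity of the argument. The only genuine difference is in (ii)$\Rightarrow$(i): you dualize back and invoke Theorem~\ref{SandS_F}~(ii)$\Rightarrow$(i) for $S^{-1}$, whereas the paper argues directly that $\ran S$ is a core for $\st_{(S^{-1})_F}$ (giving $\ker S_K=\ker S^*$) and that $\ker S=\ker T^*=\ker S_K$ by Corollary~\ref{cor2.2}; both are correct, and your version is arguably the more systematic dualization, at the cost of having to re-verify that the inverted factorization of $S^{-1}$ meets the domain hypothesis $\dom R=\dom S^{-1}=\ran S$ of Theorem~\ref{SandS_F}, which you do.
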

\begin{proof}
(i) $\Rightarrow$ (ii) Assume that $S$ is a sectorial relation such that $\ker S=\ker S^*$ and consider its inverse
$S^{-1}$. By the assumption one has $\mul S^{-1}=\mul (S^{-1})^*$ and hence by Theorem~\ref{SandS_F} there exist a linear relation
$\wt T:\sH\to \sK$, which can be assume to be closable, and a selfadjoint operator $\wt B\in\mathbf{B}(\sK)$ such that
\[
 S^{-1}=\wt T^*(I+i\wt B) \wt T, \quad (S^{-1})_F=\wt T^*(I+i\wt B) \wt T^{**}.
\]
Passing to the inverses one obtains
\[
 S=\wt T^{-1}(I+i\wt B)^{-1} (\wt T^*)^{-1}, \quad S_K=(\wt T^{**})^{-1}(I+i\wt B)^{-1} (\wt T^*)^{-1}.
\]
Since $(I+i\wt B)^{-1}=(I+\wt B^2)^{-\half} (I-i\wt B)  (I+\wt B^2)^{-\half}$, this yields
\[
 S=T(I-i\wt B)T^*, \quad S_K=T^{**}(I-i\wt B) T^*,
\]
where $T=\wt T^{-1}(I+\wt B^2)^{-\half}$ and $T^*=(I+\wt B^2)^{-\half}(\wt T^{-1})^*$; note that $(\wt T^{-1})^*=(\wt T^*)^{-1}$.
By construction $\ran T=\dom \wt T=\dom S^{-1}=\ran S$. Since $(I+\wt B^2)^{\half}$ is bounded with bounded inverse
one has $\clos T^{-1}=(I+\wt B^2)^{\half}(\clos \wt T)$ and thus $T^{-1}$ is closable precisely when $\wt T$ is closable.
Therefore the assertions in (ii) hold and one has the factorizations \eqref{sfactorSK} with $T=\wt T^{-1}(I+\wt B^2)^{-\half}$ and $B=-\wt B$.

(ii) $\Rightarrow$ (i)
By Proposition \ref{SectorialCor} every relation $S$ of the form \eqref{sfactorSK} is sectorial.
Clearly,
\[
 S\subset S^{**}\subset T^{**}(I+iB)T^*,
\]
and by the assumption $S_K=T^{**}(I+iB)T^*$. Since the range of $S$ is a core for the closed form $\st_{(S^{-1})_F}$,
one has $\ker S_K=\ker S^*$. On the other hand, by Proposition~\ref{SectorialCor} (or Corollary \ref{cor2.2}) $S$ and $S_K$ in
\eqref{sfactorSK} satisfy $\ker S=\ker T^*$ and $\ker S_K=\ker T^*$. Therefore, $\ker S=\ker S^*$ holds.
\end{proof}

It is clear that there is an analog of Corollary \ref{SebTarCor} concerning the factorization $T(I+iB)T^*$ whose formulation is left to the reader.
In what follows the purpose is to offer a construction for maximal sectorial extensions, in particular, for the Friedrichs extension and the Kre\u{\i}n extension,
for sectorial relations $S$ and $S'$ which admit a factorization as in Proposition \ref{SectorialCor} without any additional conditions
as in Theorems~\ref{SandS_F} and~\ref{SandS_K}. In the next section attention is limited to the case $S=T^*(I+iB)T$.
On the other hand, in Section \ref{sec3} a special case where $S$ admits a factorization $S=T(I+iB)T^*$ is treated
by investigating the form sum of two maximal sectorial relations.

\subsection{Maximal sectorial extensions of $T^{*}(I+iB)T$ with nonclosed $T$}\label{sec2.2}

In Lemma \ref{lem2.1} it has been shown that the relation
$T^{*}(I+iB)T$, when $T$ is not necessarily closed, is still
sectorial. The purpose in this section is to show that
$T^{*}(I+iB)T$ has maximal sectorial extensions and, in particular,
to describe all of them. It is clear that every maximal sectorial
extension of $T^{*}(I+iB)T$ is also an extension of the closure
$\clos(T^{*}(I+iB)T)$. On the other hand,
\begin{equation}\label{4eq1}
\clos(T^{*}(I+iB)T) \subset T^{*}(I+iB)T^{**},
\end{equation}
since by Proposition~\ref{SectorialCor} the relation $T^{*}(I+iB)T^{**}$ is
closed and, in fact, a maximal sectorial relation in $\sH$. Hence,
it is clear that without any additional assumptions on
$T^{*}(I+iB)T$ the relation on the right-hand side of \eqref{4eq1}
is one of the maximal sectorial extensions of $S:=T^{*}(I+iB)T$.
Under the additional condition $\mul S=\mul S^*$
one has $S_F=T^{*}(I+iB)T^{**}$; see Theorem \ref{SandS_F}.
In what follows this additional condition will not be assumed.

The aim now is to describe all extremal maximal sectorial
extensions of $S=T^{*}(I+iB)T$, including the Friedrichs extension $S_F$,
using the given factorized form of $S$.
The purpose is to incorporate explicitly the prescribed structure of
$S=T^{*}(I+iB)T$ in the construction of maximal sectorial
extensions of $T^{*}(I+iB)T$. The approach presented here
has the advantage that it prevents the construction of an auxiliary Hilbert space
when compared with the procedure appearing in \cite{HSSW17} for
a sectorial relations $S$ without additional information on its structure.

Recall from Lemma~\ref{lem2.1} that for each $\varphi',
\psi' \in \ran S$ there exist unique elements $\alpha, \beta \in
\sK$ with
\begin{equation}\label{4eq2}
 \{\varphi, \alpha\} \in T, \quad \{ (I+iB) \alpha , \varphi'\} \in T^{*},
 \quad
 \{\psi, \beta\} \in T, \quad \{ (I+iB) \beta , \psi'\} \in T^{*}.
\end{equation}
Next introduce the linear subspace $\sM_0$ of the Hilbert space
$\sK$ via
\begin{equation}\label{M0Def}
 \sM_0=\{ \alpha \in \sK: \, \alpha \in \ran T, (I+iB)\alpha \in \dom T^* \},
\end{equation}
and let $\sM$ be the closure of $\sM_0$ in $\sK$. Moreover, let
$B_m$ be the compression of $B$ to $\sM$:
\begin{equation}\label{BmDef}
 B_m:=P_\sM B\uphar \sM\in \mathbf{B}(\sM).
\end{equation}
Then $B_m$ is a selfadjoint operator in $\sM$.
Next we construct a pair of relations $Q\subset T$ and
$J\subset Q^*$, which will be used to describe the minimal and
maximal and, in fact, all extremal maximal sectorial extensions of
$T^*(I+iB)T$.

\begin{lemma}\label{lemma4a}
Associate with $T^*(I+iB)T$ the subspace $\sM_0$ of $\sK$ in
\eqref{M0Def} and the compression $B_m$ in \eqref{BmDef} and define
the linear relation $Q$ from $\sH$ to $\sM$ and the linear relation
$J$ from $\sM$ to $\sH$ via
\[
\begin{array}{l}
 Q=\{ \{\varphi, \alpha\} \in T:\, \alpha\in \sM_0 \},  \\[3pt]
 J=\{ \{ (I+iB_m)\alpha, \varphi'\}:\,  \alpha\in \sM_0,\; \{ (I+iB)\alpha ,\varphi'\}\in T^* \}.
\end{array}
\]
Then $Q \subset J^{*}$, or equivalently, $J\subset Q^{*}$, and $Q$
is a closable operator with dense range in $\sM$, while $J$ is
densely defined and satisfies $\mul J=\mul J^{**}=\mul T^*$.
Moreover, one has the equality
\[
 T^*(I+iB)T=J(I+iB_m)Q.
\]
\end{lemma}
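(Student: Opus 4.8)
The statement of Lemma~\ref{lemma4a} bundles together several assertions: (a) $Q\subset J^*$ (equivalently $J\subset Q^*$); (b) $Q$ is a closable operator with dense range in $\sM$; (c) $J$ is densely defined with $\mul J=\mul J^{**}=\mul T^*$; and (d) $T^*(I+iB)T = J(I+iB_m)Q$. The plan is to treat them in roughly that order, since (d) --- the main point --- follows once the definitions of $Q$, $J$, and $B_m$ have been unpacked, using only the description of $S=T^*(I+iB)T$ provided by Lemma~\ref{lem2.1}(i).

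\textbf{Preliminaries and statement (b).} First I would observe that by Lemma~\ref{lem2.1}(i), applied to $C=I+iB$ (which satisfies \eqref{C1} since its real part is $I$), the element $\alpha$ attached to a given $\varphi'\in\ran S$ is \emph{unique}, and moreover the pair $\{\varphi,\alpha\}\in T$ is determined modulo $\ker T$. This immediately gives that $Q$ is a well-defined operator: if $\{\varphi,\alpha\}\in Q$ and $\{\varphi,\alpha_0\}\in Q$ then $\{0,\alpha-\alpha_0\}\in T$, but $\alpha-\alpha_0\in\sM_0\subset\ran T$ is the $\sK$-component of an element of $T$ over $\varphi-\varphi=0$, forcing $\alpha=\alpha_0$ by the modulo-$\ker T$ uniqueness together with $\mul T$ considerations; more directly $\mul Q\subset\mul T\cap\sM$, and since $Q$ is a restriction of $T$ one sees $\{0,\gamma\}\in Q$ implies $\gamma\in\sM_0$ with $\{(I+iB)\gamma,0\}\in T^*$, whence $((I+iB)\gamma,\gamma)=0$ and so $\gamma=0$. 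Density of $\ran Q$ in $\sM$ is by construction: $\ran Q=\sM_0$ which is dense in $\sM=\clos\sM_0$. Closability of $Q$ will follow from $Q\subset J^*$ once (a) is in hand, since $J^*$ is closed. (Alternatively, $Q\subset T\subset((S_F)_r^{1/2})_{\mathrm s}$-type bounds make it closable; but the cleanest route is via (a).)

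\textbf{Statement (a) and statement (c).} To prove $Q\subset J^*$ I would take $\{\varphi,\alpha\}\in Q$ (so $\alpha\in\sM_0$, $\{\varphi,\alpha\}\in T$, $(I+iB)\alpha\in\dom T^*$) and $\{(I+iB_m)\beta,\psi'\}\in J$ with $\beta\in\sM_0$ and $\{(I+iB)\beta,\psi'\}\in T^*$, and compute
\[
 (\psi',\varphi) = ((I+iB)\beta,\alpha)
\]
using $\{(I+iB)\beta,\psi'\}\in T^*$ and $\{\varphi,\alpha\}\in T$; on the other hand $((I+iB)\beta,\alpha)=(\beta,\alpha)+i(B\beta,\alpha)=(\beta,\alpha)+i(P_\sM B\beta,\alpha)=((I+iB_m)\beta,\alpha)$ because $\alpha\in\sM$. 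Hence $(\psi',\varphi)=((I+iB_m)\beta,\alpha)$, which is exactly the defining inner-product relation for $\{\alpha,\varphi\}$ to lie in the adjoint of $J$ --- i.e. $\{\varphi,\alpha\}\in J^*$. (One must be mildly careful: $J$ is a relation from $\sM$ to $\sH$, so $J^*$ is a relation from $\sH$ to $\sM$, and the pairing condition reads $(\varphi,u)_\sH=(\alpha,v)_\sM$ for all $\{u,v\}\in J$; the computation above is precisely this with $\{u,v\}=\{(I+iB_m)\beta,\psi'\}$ read in the right order.) For (c): $J$ is densely defined because $\dom J=(I+iB_m)\sM_0$ and $I+iB_m$ is a bounded operator on $\sM$ with bounded inverse (its real part is $I_\sM$), so $\dom J$ is dense in $\sM$. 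The multivalued part: $\mul J=\{\psi': \{0,\psi'\}\in J\}$ forces (via $(I+iB_m)\alpha=0\Rightarrow\alpha=0$) that $\mul J=\{\psi': \{0,\psi'\}\in T^*\}=\mul T^*$; and $\mul J=\mul J^{**}$ follows since $\mul T^*$ is closed, or by the general argument in Lemma~\ref{lem2.1}(ii)-style reasoning applied to $J$.

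\textbf{Statement (d), the main obstacle.} This is the heart of the lemma. I would prove the two inclusions separately. For $S=T^*(I+iB)T\subset J(I+iB_m)Q$: take $\{f,f'\}\in S$. By Lemma~\ref{lem2.1}(i) there is the unique $\alpha\in\sK$ with $\{f,\alpha\}\in T$ and $\{(I+iB)\alpha,f'\}\in T^*$; in particular $\alpha\in\ran T$ and $(I+iB)\alpha\in\dom T^*$, so $\alpha\in\sM_0$. Thus $\{f,\alpha\}\in Q$, and $\{(I+iB_m)\alpha,f'\}\in J$ by definition of $J$ (with the same $\alpha$), so $\{f,f'\}\in J(I+iB_m)Q$ via the intermediate point $(I+iB_m)\alpha$ --- noting $(I+iB)\alpha$ and $(I+iB_m)\alpha$ agree when paired against elements of $\sM$, but here what is needed is that $\{(I+iB)\alpha,f'\}\in T^*$ \emph{with $\alpha\in\sM_0$} is exactly the hypothesis in the definition of $J$, so $\{(I+iB_m)\alpha,f'\}\in J$ literally. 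Conversely, for $J(I+iB_m)Q\subset S$: take $\{f,f'\}$ in the product, so there is an intermediate $\{f,\alpha\}\in Q$ (hence $\alpha\in\sM_0$, $\{f,\alpha\}\in T$) and then $(I+iB_m)\alpha$ must be fed into $J$, i.e. $\{(I+iB_m)\alpha,f'\}\in J$; but every element of $\dom J$ has the form $(I+iB_m)\gamma$ for a \emph{unique} $\gamma\in\sM_0$ (uniqueness since $I+iB_m$ is injective on $\sM$), and since $(I+iB_m)\alpha=(I+iB_m)\gamma$ gives $\gamma=\alpha$, the membership $\{(I+iB_m)\alpha,f'\}\in J$ unpacks to $\{(I+iB)\alpha,f'\}\in T^*$. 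Combined with $\{f,\alpha\}\in T$ this says $\{f,f'\}\in T^*(I+iB)T=S$. The subtle point to get right --- and the step I expect to cost the most care --- is the bookkeeping around replacing $B$ by its compression $B_m$: one must check that in the intermediate slot the vectors $(I+iB)\alpha$ (living in $\sK$) and $(I+iB_m)\alpha$ (living in $\sM$) carry exactly the information needed, which hinges on $\alpha\in\sM$ so that $B\alpha$ and $B_m\alpha$ have the same $\sM$-component, while the $\sK\ominus\sM$-component of $B\alpha$ is \emph{irrelevant} because it is only ever paired (through $T^*$) against ranges of $T$ --- and one should verify that $(I+iB)\alpha\in\dom T^*$ together with $\alpha\in\sM_0$ is genuinely equivalent to the corresponding condition phrased with $B_m$, which is why $\sM_0$ was defined using the full $B$ rather than $B_m$. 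Once this matching is pinned down the equality (d) is immediate, and (b)'s closability follows as noted from $Q\subset J^*$.
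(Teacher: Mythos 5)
Your proposal is correct and follows essentially the same route as the paper: the same adjoint--pairing computation $(\psi',\varphi)=((I+iB)\beta,\alpha)=((I+iB_m)\beta,\alpha)$ (valid because $\alpha\in\sM$) gives $Q\subset J^*$; density of $\ran Q$ is by construction; closability of $Q$ comes from $Q\subset J^*$ with $J$ densely defined; and the factorization identity is unpacked from Lemma \ref{lem2.1}(i) exactly as in the paper. In fact you are more explicit than the paper on the reverse inclusion $J(I+iB_m)Q\subset T^*(I+iB)T$, where the injectivity of $I+iB_m$ on $\sM$ is indeed the point, and your density argument for $\dom J=(I+iB_m)\sM_0$ via bounded invertibility of $I+iB_m$ is a slightly cleaner variant of the paper's orthogonality argument. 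The one step you leave underdeveloped is $\mul J^{**}\subset\mul J$: the first justification you offer (``$\mul T^*$ is closed'') does not by itself give this, since $\mul J^{**}$ is produced by limits of whole pairs in $J$, not just of elements of $\mul T^*$. What is needed, and what the paper writes out, is: for $\psi'\in\mul J^{**}$ take $\{(I+iB_m)\beta_n,\psi_n'\}\in J$ with $(I+iB_m)\beta_n\to 0$ and $\psi_n'\to\psi'$, use the bounded invertibility of $I+iB_m$ on $\sM$ to conclude $\beta_n\to 0$, hence $(I+iB)\beta_n\to 0$ in $\sK$, and then invoke closedness of the relation $T^*$ to get $\{0,\psi'\}\in T^*$. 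Your alternative suggestion of ``Lemma \ref{lem2.1}(ii)-style reasoning'' is precisely this argument, so the gap is cosmetic and easily closed.
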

\begin{proof}
It is first shown that $Q\subset J^*$. For this let $\{\varphi,
\alpha\}\in Q$ and $\{(I+iB_m)\beta, \psi'\}\in J$. Then
$\alpha,\beta\in \sM_0$ and they correspond to some
$\{\varphi,\varphi'\},\{\psi,\psi'\}\in T^*(I+iB)T$ via
\eqref{4eq2}. In particular, $\{\varphi,\alpha\}\in T$ and hence
\[
 (\psi',\varphi)-((I+iB_m)\beta,\alpha)=(\psi',\varphi)-((I+iB)\beta,\alpha)
 =0,
\]
where the last equality follows from \eqref{4eq2}. Hence $Q \subset
J^{*}$ and, equivalently, $J\subset Q^{*}$.

Next it is shown that the set $(I+iB_m)(\sM_0)$ is dense in $\sM$.
Assume conversely that there exists $\beta\in\sM$ such that
$((I+iB_m)\alpha,\beta)=0$ for all $\alpha\in\sM_0$. Let
$\alpha_n\in\sM_0$ be a sequence such that $\alpha_n\to \beta$ (in
$\sK$). Then
\[
 0=((I+iB_m)\alpha_n,\beta)=((I+iB)\alpha_n,\beta)
\]
and by taking limit this leads to
\[
 0= \lim_{n\to\infty}((I+iB)\alpha_n,\beta)=((I+iB)\beta,\beta),
\]
which implies that $\beta=0$. Consequently, $J$ is densely defined
in $\sM$ and hence its adjoint $J^*$ is an operator. Since $Q\subset
J^*$, the relation $Q$ is a closable operator. Furthermore, by
definition, $\ran Q$ is dense in $\sM$.

Now consider the multivalued parts of $J$ and its closure $J^{**}$.
The inclusion $\mul T^*\subset \mul J$ follows from
the definition of $J$ and clearly $\mul J \subset \mul J^{**}$.
On the other hand, if $\psi'\in \mul J^{**}$ then there are sequences
$\{\psi_n,\beta_n\}\in T$ and $\{(I+iB)\beta_n,\psi_n'\}\in T^*$ such that
$(I+iB_m)\beta_n\to 0$ and $\psi_n'\to \psi'$. Then necessarily $\beta_n\to 0$ in $\sM$
since $B_m$ is selfadjoint and hence $(I+iB_m)$ is boundedly invertible in $\sM$.
Then $(I+iB)\beta_n\to 0$ in $\sK$ and consequently $\{0,\psi'\}\in T^*$, i.e.
$\psi'\in \mul T^*$. Hence, $\mul J^{**}\subset \mul T^*$ and the equalities
$\mul J=\mul J^{**}=\mul T^*$ follow.

Finally, the last identity is shown. The inclusion
$T^*(I+iB)T\subset J(I+iB_m)Q$ follows directly from \eqref{4eq2}
and the definitions of $Q$ and $J$. The reverse inclusion
$J(I+iB_m)Q\subset T^*(I+iB)T$ is clear from the definitions of
$Q$ and $J$.
\end{proof}

It follows from Lemma~\ref{lemma4a} that $J^*$ is a closed operator from $\sH$ into $\sM$
and its domain is dense in $(\mul J^{**})^\perp=\cdom T$. Moreover, by definition the domain of
the restriction $Q\subset J^*$ is given by $\dom Q=\dom (T^*(I+iB)T)$; cf. \eqref{M0Def}.
The next result characterizes a class of closed sectorial forms generated by linear
operators $K$ lying between these two operators.

\begin{proposition}\label{Msecforms}
Let the notation be as in Lemma~\ref{lemma4a} and let $K$ be a
linear operator satisfying
\[
Q \subset K \subset J^{*}.
\]
Then the form induced by $K$:
\[
 \st_K[h,k]=\langle(I+iB_m)Kh,Kk\rangle, \quad h,k \in \dom K,
\]
is closable. The closure of the form $\st$ is given by
\begin{equation}\label{qrj--}
 \st_{K^{**}}[h,k]=\langle(I+iB_m)K^{**}h,K^{**}k\rangle,
 \quad h, \, k \in \dom K^{\ast \ast},
\end{equation}
and the corresponding maximal sectorial relation $K^*(I+iB_m)K^{**}$
is an extension of the sectorial relation $T^*(I+iB)T$.
\end{proposition}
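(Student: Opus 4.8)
The plan is to verify the three assertions in turn: closability of $\st_K$, the formula for its closure, and the extension property. For closability, note that since $Q\subset K\subset J^*$ and $Q$ is a densely defined closable operator with dense range in $\sM$ (Lemma~\ref{lemma4a}), the operator $K$ is itself densely defined (its domain contains $\dom Q=\dom(T^*(I+iB)T)$ in general; in any case $K\subset J^*$ forces $\overline{\dom K}\subset\cdom T$) and closable, with $K^{**}\subset J^*$. Because $B_m$ is a bounded selfadjoint operator in $\sM$, the form $\st_K[h,k]=\langle(I+iB_m)Kh,Kk\rangle$ is sectorial with vertex at the origin and semi-angle at most $\arctan\|B_m\|\le\arctan\|B\|$, exactly as in the computation at the start of the proof of Proposition~\ref{SectorialCor}: $|\IM\st_K[h,h]|=|\langle B_mKh,Kh\rangle|\le\|B_m\|\,\|Kh\|^2=\|B_m\|\RE\st_K[h,h]$. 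Since the real part $(\st_K)_r[h,k]=\langle Kh,Kk\rangle$ is the form of the nonnegative operator $K^*K$ associated with the closable operator $K$, it is closable; and a sectorial form whose real part is closable is itself closable (its closure being governed by the closure of the real part, cf. the discussion preceding Theorem~\ref{SandS_F} and \cite[Theorem~VI.1.27]{Kato}). This gives closability of $\st_K$.

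Next I would identify the closure. The closure of $(\st_K)_r$ is the form $h,k\mapsto\langle K^{**}h,K^{**}k\rangle$ with domain $\dom K^{**}$, since $K^{**}$ is the closure of the closable operator $K$ and the graph norm of $K$ is exactly $(\|\cdot\|^2+\|K\cdot\|^2)^{1/2}$, i.e. the form-norm of $(\st_K)_r$. Because $B_m\in\mathbf{B}(\sM)$ is bounded, the map $h,k\mapsto\langle(I+iB_m)K^{**}h,K^{**}k\rangle$ on $\dom K^{**}$ is a closed sectorial form which agrees with $\st_K$ on the form-core $\dom K$ and has the same form domain as the closure of $(\st_K)_r$; hence it is the closure $\st_{K^{**}}$ of $\st_K$, giving \eqref{qrj--}. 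The first representation theorem then attaches to $\st_{K^{**}}$ a unique maximal sectorial relation, and the identification of this relation with $K^*(I+iB_m)K^{**}$ follows from Lemma~\ref{lem2.1}(i) applied with the closed operator $K^{**}$ and $C=I+iB_m$ (whose real part is boundedly invertible, so the hypotheses \eqref{C1}, \eqref{C2} hold): by Proposition~\ref{SectorialCor} the relation $K^*(I+iB_m)K^{**}$ is maximal sectorial, and a direct computation of its form along the lines of \eqref{zwei} shows it generates precisely $\st_{K^{**}}$.

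Finally, the extension property. Since $Q\subset K$ one has $K^*\subset Q^*$, hence $K^*\subset Q^*$ and $K^{**}\supset Q^{**}\supset Q$; combining, $K^*(I+iB_m)K^{**}\supset Q^*(I+iB_m)Q$ — but one must be a little careful, as $K^*(I+iB_m)K^{**}$ need not contain $J(I+iB_m)Q$ verbatim. Instead I would argue directly from \eqref{4eq2}: if $\{f,f'\}\in T^*(I+iB)T$, then there is $\alpha\in\sM_0$ with $\{f,\alpha\}\in T$ and $\{(I+iB)\alpha,f'\}\in T^*$; then $\{f,\alpha\}\in Q\subset K$, so $Kf=\alpha\in\sM_0$, while $\{(I+iB_m)\alpha,f'\}\in J$ and, since $J\subset Q^*\subset K^*$, this gives $\{(I+iB_m)Kf,f'\}\in K^*$, i.e. $\{f,f'\}\in K^*(I+iB_m)K^{**}$. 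Hence $T^*(I+iB)T=J(I+iB_m)Q\subset K^*(I+iB_m)K^{**}$, as claimed.

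\emph{Main obstacle.} The delicate point is the identification in \eqref{qrj--} of the closed form attached to $K^*(I+iB_m)K^{**}$: one needs that passing to the closure of a sectorial form amounts to passing to the closure of its (nonnegative) real part together with retaining the bounded imaginary perturbation $B_m$, and that the resulting maximal sectorial relation is exactly $K^*(I+iB_m)K^{**}$ rather than merely an extension of it. This is where Lemma~\ref{lem2.1}, Lemma~\ref{s-second}, and the closedness half of Proposition~\ref{SectorialCor} do the real work, via the boundedly invertible real part of $C=I+iB_m$.
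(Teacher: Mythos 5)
Your proposal is correct and follows essentially the same route as the paper: closability of $K$ (hence of $\st_K$) from the sandwich $Q\subset K\subset J^*$, identification of the closed form with $K^{**}$, maximal sectoriality of $K^*(I+iB_m)K^{**}$ via Proposition~\ref{SectorialCor}, and the extension property from $Q\subset K^{**}$, $J\subset K^*$ together with the factorization $T^*(I+iB)T=J(I+iB_m)Q$ of Lemma~\ref{lemma4a}. One slip to fix: you justify $J\subset K^*$ by the chain $J\subset Q^*\subset K^*$, but $Q\subset K$ gives $K^*\subset Q^*$, i.e.\ the inclusion $Q^*\subset K^*$ is backwards; the correct (and immediate) justification is that $K\subset J^*$ yields $J\subset J^{**}=(J^*)^*\subset K^*$, which is exactly what the paper uses. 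Also, $K$ need not be densely defined in $\sH$ (its domain is only dense in $\cdom T$ when $T$ is not densely defined), but this is harmless since closability of $K$ follows from $K\subset J^*$ with $J^*$ a closed operator, as you in effect observe.
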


\begin{proof}
Clearly $K$ is closable and its closure $K^{**}$ satisfies
\[
 Q \subset K \subset K^{**} \subset J^{*}, \quad J \subset J^{**} \subset K^*.
\]
Hence, the form $\st_K$ is also closable and its closure is
determined by $K^{**}$ as in \eqref{qrj--}. By Proposition~\ref{SectorialCor}
$K^*(I+iB_m)K^{**}$ is maximal sectorial and it clearly corresponds to the closed form
$\st_{K^{**}}$ in \eqref{qrj--}; cf. Lemma \ref{s-second}.
Furthermore, since $J\subset K^{*}$ and
$Q\subset K^{**}$ it follows from Lemma~\ref{lemma4a} that
\[
 T^*(I+iB)T=J(I+iB_m)Q\subset K^*(I+iB_m)K^{**},
\]
which proves the last statement.
\end{proof}

It is clear from Proposition~\ref{Msecforms} that
\[
 K_1\subset K_2 \quad \Longleftrightarrow \quad \st_{K_1} \subset
 \st_{K_2}
\]
and that these forms are closed precisely when the operators $K_1$
and $K_2$ are closed. The next result shows that the minimal choice
$K_1=Q^{**}$ in fact corresponds to the Friedrichs extension and the
maximal choice $K_2=J^*$ corresponds to the Kre\u{\i}n extension of
$T^*(I+iB)T$. Therefore the above procedure in this sense covers the
extreme maximal sectorial extensions of $T^*(I+iB)T$.

\begin{theorem}\label{s-F-N}
Let $S=T^*(I+iB)T$, $B_m$, $Q$, and $J$ be as in
Lemma~\ref{lemma4a}. Then the following statements hold.

\begin{enumerate}\def\labelenumi{\rm (\roman{enumi})}
\item
The Friedrichs extension $S_{F}$ of $S$ is given by
\[
S_{F} =Q^{\ast} (I + i B_m) Q^{\ast \ast}
\]
and the corresponding closed form $\st_{F}$ is given by
\[
\st_{S_F} [ h , k ] = \left(
                       (I + i B_m) Q^{\ast \ast} h ,
               Q^{\ast \ast} k
                          \right) , \quad h, \, k \in \dom Q^{\ast \ast} .
\]
\item The Kre\u{\i}n extension $S_{K}$ of $S$ is given by
\[
S_{K} = J^{\ast \ast} (I + i B_m) J^{\ast}
\]
and the corresponding closed form $\st_{S_K} $ is given by
\[
\st_{S_K} [ h , k ] = \left(
                        (I + i B_m) J^{\ast} h , J^{\ast} k
                          \right), \quad h, \, k \in \dom J^{\ast}.
\]
\end{enumerate}
In particular, $S_K$ is an operator if and only if $T$ is densely
defined. Therefore, $S=T^*(I+iB)T$ admits a maximal sectorial
operator extension, precisely when $T$ is densely defined; here $T$
need not be a closable operator, and it can even be multivalued.
\end{theorem}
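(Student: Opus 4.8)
The plan is to identify the Friedrichs extension of $S = T^*(I+iB)T$ by matching the form produced from the minimal choice $K=Q^{**}$ against the abstract characterization of $S_F$ as the relation attached to the closure of the form $\st_S$ from Lemma~\ref{kak0}, and then to obtain the Kre\u{\i}n extension by duality, applying part (i) to $S^{-1}$. For part (i), the key point is that by Lemma~\ref{lemma4a} the restriction $Q$ has $\dom Q = \dom S$, and for $\{\varphi,\varphi'\},\{\psi,\psi'\}\in S$ the corresponding $\alpha,\beta\in\sM_0$ from \eqref{4eq2} satisfy $Q\varphi=\alpha$, $Q\psi=\beta$, so that
\[
 \st_S[\varphi,\psi]=(\varphi',\psi)=((I+iB)\alpha,\beta)=((I+iB_m)Q\varphi,Q\psi)=\st_Q[\varphi,\psi].
\]
Thus $\st_S=\st_Q$ on $\dom S=\dom Q$. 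By Proposition~\ref{Msecforms} the form $\st_Q$ is closable with closure $\st_{Q^{**}}$ given by \eqref{qrj--}, so the closure of $\st_S$ is exactly $\st_{Q^{**}}$. By the first representation theorem the maximal sectorial relation attached to this closed form is, by definition, the Friedrichs extension $S_F$, and by Proposition~\ref{SectorialCor} (applied to the closed operator $Q^{**}$) this relation is $Q^*(I+iB_m)Q^{**}$. This yields both displayed formulas in (i).

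For part (ii), I would invoke the definition $S_K=((S^{-1})_F)^{-1}$. Since $S=T^*(I+iB)T$, passing to inverses gives $S^{-1}=T^{-1}(I+iB)^{-1}(T^*)^{-1}$, and, as in the proof of Theorem~\ref{SandS_K}, writing $(I+iB)^{-1}=(I+B^2)^{-\half}(I-iB)(I+B^2)^{-\half}$ puts $S^{-1}$ into the standard form $\wt T^{*}(I-iB)\wt T$ with $\wt T = (I+B^2)^{-\half}(T^*)^{-1}$ and $\wt T^* = (T^{-1})(I+B^2)^{-\half}$, a closed operator being composed exactly as in the construction of $J$. Applying part (i) to $S^{-1}$ identifies $(S^{-1})_F$ as $R^*(I-i(B_m))R^{**}$, where $R$ is the analogue of $Q$ built from $\sM_0$ for $S^{-1}$; tracing through the definitions in Lemma~\ref{lemma4a}, the roles of $T$ and $T^*$ are interchanged, so $R$ plays the role of $J^{*}$ relative to $S$, i.e.\ $R = J^*$ up to the bounded invertible twist $(I+B_m^2)^{\pm\half}$ which does not affect closedness or the associated maximal sectorial relation beyond converting $I-iB_m$ back to $I+iB_m$. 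Taking inverses of $(S^{-1})_F$ then gives $S_K=J^{**}(I+iB_m)J^*$ and the corresponding form on $\dom J^*$. The bookkeeping with the $(I+B_m^2)^{\pm\half}$ factors and verifying that the subspace $\sM_0$ for $S^{-1}$ corresponds to the same $\sM$ (and that $\dom R=\ran S=\dom S^{-1}$) is where care is needed; this is the main obstacle, but it is essentially the same computation already carried out in the proof of Theorem~\ref{SandS_K} and is routine once set up.

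Finally, for the last assertions: $S_K=J^{**}(I+iB_m)J^*$ is an operator precisely when $\mul S_K=\{0\}$, and by Lemma~\ref{lem2.1}(i) (via Proposition~\ref{SectorialCor}) $\mul S_K=\mul (J^{**})^*=\mul J^{*}$. But $\dom J$ is dense in $\sM$ by Lemma~\ref{lemma4a}, so $J^*$ is an operator, hence $\mul J^{*}$ is governed by $(\ran J^{**})^\perp$; unwinding, $\mul S_K=\{0\}$ if and only if $\dom J^*$ is dense in $\sH$, which by Lemma~\ref{lemma4a} ($\mul J^{**}=\mul T^*$, so $(\dom J^*)^\perp=\mul T^*$) holds exactly when $\mul T^*=\{0\}$, i.e.\ when $T$ is densely defined. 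Since $S_K$ is the largest extremal extension in the relevant ordering of real parts and $S_F\subset$ every maximal sectorial extension, $S$ has a maximal sectorial operator extension if and only if $S_K$ is an operator, i.e.\ if and only if $T$ is densely defined; no closability of $T$ is used, and $T$ may be multivalued since only $\mul T^*$, equivalently $\cdom T=\sH$, enters.
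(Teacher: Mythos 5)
Part (i) of your argument is correct, and it takes a genuinely different route from the paper. You observe that the forms $\st_S$ and $\st_Q[h,k]=((I+iB_m)Qh,Qk)$ literally coincide: $\dom Q=\dom S$, and for $\{\varphi,\varphi'\},\{\psi,\psi'\}\in S$ the elements $\alpha,\beta$ of \eqref{4eq2} satisfy $Q\varphi=\alpha$, $Q\psi=\beta$ and $(\varphi',\psi)=((I+iB)\alpha,\beta)=((I+iB_m)\alpha,\beta)$. Hence the closure of $\st_S$ is $\st_{Q^{**}}$ and Proposition~\ref{Msecforms} identifies the associated maximal sectorial relation as $Q^*(I+iB_m)Q^{**}$. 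The paper instead verifies $\dom\bigl(Q^*(I+iB_m)Q^{**}\bigr)\subset\dom\st_{S_F}$ by an approximation argument and invokes the criterion of \cite[Theorem~7.3]{HSSW17}; your version is shorter and makes the mechanism more transparent, at the price of leaning on Proposition~\ref{Msecforms} for the statement that the closure of $\st_Q$ is $\st_{Q^{**}}$ (which the paper has already established).

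Part (ii) has a genuine gap. The relation $S^{-1}=T^{-1}(I+iB)^{-1}(T^*)^{-1}$ is a factorization of the second kind $WCW^*$ with $W=T^{-1}$, not of the first kind $V^*CV$, and the two cannot be interchanged when $T$ is not closed. Concretely, with $\wt T=(I+B^2)^{-\half}(T^*)^{-1}$ one has $\wt T{}^*=(T^{**})^{-1}(I+B^2)^{-\half}$, \emph{not} $T^{-1}(I+B^2)^{-\half}$: an adjoint is always closed, whereas $T^{-1}(I+B^2)^{-\half}$ is non-closed precisely in the case of interest. Consequently $\wt T{}^*(I-iB)\wt T=(T^{**})^{-1}(I+iB)^{-1}(T^*)^{-1}=\bigl(T^*(I+iB)T^{**}\bigr)^{-1}$, which is already maximal sectorial and in general a proper extension of $S^{-1}$; your recipe taken literally would therefore output $T^*(I+iB)T^{**}$ as ``$S_K$'', which is wrong (under $\mul S=\mul S^*$ that relation is $S_F$). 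This is exactly why Theorem~\ref{SandS_K} needs the hypothesis $\ker S=\ker S^*$, and why the paper proves (ii) directly, showing $\ran\bigl(J^{**}(I+iB_m)J^*\bigr)\subset\dom\st_{(S^{-1})_F}$ by approximation and citing \cite[Theorem~7.5]{HSSW17}. Your idea can be salvaged in the spirit of your part (i) by working with $\st_{S^{-1}}$ directly: Lemma~\ref{lem2.1}(i) makes $R\colon\varphi'\mapsto\alpha$ a well-defined operator on $\ran S$ with $\st_{S^{-1}}[\varphi',\psi']=((I-iB_m)R\varphi',R\psi')$ and $R=(I+iB_m)^{-1}J^{-1}$, but one must then prove closability of $R$ and an analogue of Proposition~\ref{Msecforms} for it -- this is not the routine bookkeeping you describe. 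Finally, in your last paragraph the correct identity is $\mul S_K=\mul J^{**}=\mul T^*$ (Corollary~\ref{cor2.2}(i) together with Lemma~\ref{lemma4a}), not $\mul S_K=\mul J^*$; your conclusion is right but that intermediate step as written is not.
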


\begin{proof}
(i) According to Proposition \ref{Msecforms} $H=Q^{\ast} (I + i B_m)
Q^{\ast \ast}$ is a maximal sectorial extension of $S$. In order to
show that it coincides with $S_F$ it suffices to prove that $\dom H\subset \dom \st_{S_F}$;
see e.g. \cite[Theorem~7.3]{HSSW17}.
Let $h  \in \dom H$. Then $\{h, h' \} \in Q^{\ast} (I + i B_m) Q^{\ast\ast}$ for some $h'\in \sH$.
In particular, $h \in \dom Q^{\ast \ast}$ and $\{h, Q^{**}h\}$
can be approximated by a sequence of elements
\[
 \left\{ \varphi_{n} , \alpha_n \right\} \in Q,
\]
where $\alpha_n\in \sM_0$ and $\{(I+iB_m)\alpha_n,\varphi_n'\}\in J\subset Q^*$ such that
\begin{equation}\label{s-vier}
 \varphi_n \to h \mbox{ in } \sH, \quad \alpha_n \to Q^{**}h \mbox{ in } \sM;
\end{equation}
see Lemma \ref{lemma4a} and \eqref{M0Def}.
Hence $(\alpha_n)$ is a Cauchy sequence in $\sM$ and this yields
\begin{equation}\label{Cauc}
 \left((\varphi_{n}'-\varphi_{m}', \varphi_{n}-\varphi_{m}\right)
 =\left((I+iB_m) (\alpha_{n}-\alpha_{m}), \alpha_{n}-\alpha_{m} \right)\to 0, \quad n,m\to \infty.
\end{equation}
Since $\{\varphi_n,\varphi_n'\} \in  J(I+iB_m)Q=S$ by Lemma \ref{lemma4a}, it follows from
\eqref{s-vier} and \eqref{Cauc} by the definition of the form $\st_{S_F}$ that $h \in \dom \st_{S_F}$;
cf. e.g. \cite[Eq. (7.2)]{HSSW17}.
Hence $\dom H \subset \dom \st_{S_F}$ and the claim $H=S_F$ is proved.

(ii) Likewise $H=J^{\ast \ast} (I + i B_m) J^{\ast}$ is a maximal sectorial extension of $S$ by Proposition \ref{Msecforms}.
To show that $H=S_K$, it suffices to prove that $\ran H\subset \dom \st_{(S^{-1})_F}$;
see \cite[Theorem~7.5]{HSSW17}.
Let $h' \in \ran H$. Then $\{h, h' \} \in J^{\ast \ast} (I + i B_m) J^{\ast}$ for some $h\in \sH$, and
\[
\{ (I +i B_m) J^{\ast} h , h' \} \in J^{\ast \ast}.
\]
This element can be approximated by a sequence of elements
\[
\left\{ (I + i B_m)\alpha_n , \varphi_{n}' \right\} \in J,
\]
where $\alpha_n\in \sM_0$ and $\{\varphi_n,\alpha_n\}\in Q\subset J^*$ for some $\varphi_n\in \dom T$, such that
\begin{equation}\label{s-een00}
 \varphi_n' \to h' \mbox{ in } \sH,
 \quad (I + i B_m)\alpha_n \to (I + i B_m) J^{\ast} h \mbox{ in } \sM;
\end{equation}
see \eqref{M0Def} and Lemma \ref{lemma4a}.
Since $B_m$ is bounded and selfadjoint in $\sM$,
the operator $I + i B_m$ is bounded with bounded inverse and,
therefore, \eqref{s-een00} is equivalent to
\begin{equation}\label{s-een}
 \varphi_n' \to h' \mbox{ in } \sH,
 \quad \alpha_n =J^* \varphi_n \to J^*h \mbox{ in } \sM.
\end{equation}
In particular, $(\alpha_n)$ is a Cauchy sequence in $\sM$ and again \eqref{Cauc} follows.
Since $\{\varphi_n,\varphi_n'\} \in J(I+iB_m)Q = S$ (see Lemma \ref{lemma4a}), it follows from
\eqref{Cauc} and \eqref{s-een} that $h' \in \dom \st_{(S^{-1})_F}$. Therefore, $\ran
H \subset \dom \st_{(S^{-1})_F}$ and $H=S_{K}$ is proved.

The last statement follows from the minimality of $S_K$, which
implies in particular that $\dom \st_H\subset \dom \st_{S_K}$: if $H$ is any
maximal sectorial operator extension of $S$, then $H$ and,
therefore, also $S_K$ is densely defined; notice that $\mul S_K=\mul J^{**}=\mul T^*$.
\end{proof}

The maximal sectorial extensions  $K^*(I+iB_m)K^{**}$ of the
sectorial relation $S$ as described in Proposition \ref{Msecforms}
with $B_m$ as in \eqref{s-B} and $Q \subset K \subset J^*$ can be
characterized among all maximal sectorial extensions of $S$.
The main ingredient in Proposition \ref{Msecforms} is that the maximal sectorial
extensions of $S$ of the form $T^{\ast} (I+iB_m) T$ with $B_m$ as in
\eqref{s-B} and $T$ an arbitrary closed linear operator satisfying
$Q\subset T \subset J^*$ can be identified as the class of all
\emph{extremal sectorial extensions of $S$}; for details see
\cite[Theorems~8.4,~8.5]{HSSW17}.

This subsection is finished with an example illustrating some special choices for $T$ with descriptions of the mappings $Q$ and $J$
appearing in the description of the maximal sectorial extensions $S_F$ and $S_K$ of the sectorial relation $S=T^*(I+iB)T$.

\begin{example}
(a) Let $T$ be an operator and consider the form
\[
 \st[h,k]=((I+iB)Th,Tk), \quad h,k\in \dom T.
\]
Then this form is $T$ is closable (closed) if and only if $T$ is
closable (closed, respectively), in which case the closures are
related by
\[
 \wt{\st}[h,k]=((I+iB)T^{**}h,T^{**}k), \quad h,k\in \dom T^{**},
\]
and one has the equalities $Q^{**}=J^*=T^{**}$ and, consequently,
\[
 S_F=S_K=T^{*}(I+iB)T^{**},
\]
which is an operator if and only if $T$ is densely defined.

(b) Let $T$ be a singular operator (or singular relation); for definitions see e.g. \cite{HSeSn2018}.
Then $\cran T=\mul T^{**}$ and $\cdom T=\ker T^{**}$. In this case
$\sM=\{0\}$ and hence,
\[
 Q=0\uphar\dom (T^*(I+iB)T)=0\uphar\ker T,\qquad \dom
 J=\{0\},\quad \mul J=\mul T^*,
\]
so $\dom Q=\ker Q$ while $J$ is a pure relation. Consequently,
\[
 S_F=Q^* Q^{**} ,\quad  S_K=J^{**} J^*
\]
are nonnegative selfadjoint relations with $\dom S_F=\ker
S_F=\overline{\ker T}$, $\dom S_K=\ker S_K=\cdom T$. If, in
addition, $T$ is densely defined, then $S_K=0$ is a selfadjoint
operator, while $S_F$ is an operator if and only if $\ker T$ is
dense in $\sH$.

(c) Let $T$ be a densely defined (not necessarily closable) operator
or relation. Then $J:\sM\to \sH$ is densely defined and since $\mul
J=\mul J^{**}=\mul T^*=\{0\}$, the Krein extension $S_K$ is a
densely defined maximal sectorial operator:
\[
 S_K= J^{\ast \ast} (I + i B_m) J^{\ast};
\]
cf. Theorem~\ref{s-F-N}.
\end{example}

\subsection{Connection to the abstract construction}
In this section the explicit construction of maximal sectorial extensions for
$S=T^*(I+iB)T$ that was using the factorized form of $S$ is connected
with the construction appearing in the abstract setting where the
specific form of $S$ is taken into account.

The starting point here follows the construction presented in
\cite{HSSW17}. With any sectorial relation $S$ in $\sH$ introduce
the range space $\ran S$ in $\sK$ and provide it with a new inner
product. Let $\{\varphi, \varphi'\}, \{\psi, \psi'\} \in S$ and
define
\begin{equation}\label{SinnerPr}
 \langle \varphi', \psi' \rangle_{S}= \frac{1}{2} \left( (\varphi',\psi)+(\varphi, \psi')
 \right).
\end{equation}
Note that if $\{\varphi_{0}, \varphi'\}, \{\psi_{0}, \psi'\}  \in S$
the inner product remains the same.
Due to the definition of $\{\varphi, \varphi'\}, \{\psi, \psi'\}
\in S$ one sees that
\[
\langle \varphi', \varphi' \rangle_{S}= \RE (\varphi',\varphi).
\]
Now sectoriality of $S$ combined with an application of the
Cauchy-Schwarz inequality (see \cite{HSSW17} for details) shows that
the isotropic part of $\ran S$ with respect to the inner product $\langle
\cdot, \cdot \rangle_S$ is given by
\[
 \sR_{0}=\{ \varphi' \in \ran S: (\varphi',\varphi)=0
  \mbox{ for some } \varphi \mbox{ with }  \{\varphi, \varphi'\} \in S\},
\]
in particular, $\sR_0=\ran S \cap \mul S^*$. Let $(\sH_{S},\langle
\cdot, \cdot\rangle_{S})$ be the Hilbert space completion of $\ran
S/ \sR_{0}$ with respect to the inner product generated on the factor space
by \eqref{SinnerPr}. Define the symmetric form $\sb$ on $\dom \sb =
\ran S/\sR_0$ by
\[
\sb [[\varphi'],[\psi']] = \frac{i}{2}
                   \left( (\varphi,\psi') - (\varphi' , \psi) \right),\quad
\{\varphi,\varphi'\}, \,\{\psi,\psi'\} \in S.
\]
Note that this definition is correct as seen by checking it for
$\{\varphi_{0}, \varphi'\}, \{\psi_{0}, \psi'\}  \in S$. It follows
from \cite{HSSW17}
 that $\sb$ is a bounded everywhere defined symmetric form
on $\ran S / \sR_{0}$.
Therefore its closure, also denoted by $\sb$, is an
everywhere defined bounded symmetric
form on $\sH_{S}$. Hence there exists a bounded selfadjoint operator
$B_S \in \mathbf{B}(\sH_{S})$ such that
\begin{equation}\label{s-B}
\sb [[\varphi'],[\psi']] =\langle B_{S} [\varphi'], [\psi'] \rangle_S,
\quad \{\varphi,\varphi'\}, \,\{\psi,\psi'\} \in S.
 \end{equation}

Now the prescribed form $T^*(I+iB)T$ of $S$ will be incorporated in
the above abstract construction. For this purpose recall that for each $\varphi', \psi' \in \ran S$ there
exists unique elements $\alpha, \beta \in \sK$ with
\begin{equation}\label{4eqB}
 \{\varphi, \alpha\} \in T, \quad \{ (I+iB) \alpha , \varphi'\} \in T^{*},
 \quad
 \{\psi, \beta\} \in T, \quad \{ (I+iB) \beta , \psi'\} \in T^{*}.
\end{equation}
see \eqref{4eq2}. This leads to
\[
 \langle \varphi', \psi' \rangle_{S}=  (\alpha, \beta),
\]
showing again that the definition is independent of the particular
first entries in $\{\varphi, \varphi'\}, \{\psi, \psi'\} \in S$.
Furthermore, \eqref{4eqB} implies that
\[
 (\varphi',\varphi)=0 \quad \Leftrightarrow \quad (\alpha, \alpha)+i(B\alpha, \alpha)=0
 \quad \Leftrightarrow \quad \alpha=0.
\]
Thus $\sR_{0}=\mul T^{*}=\mul S$ and on $\ran S / \sR_{0}$ one has
\begin{equation}\label{iso}
  \langle [\varphi'], [\psi'] \rangle_{S}=  (\alpha, \beta), \quad
  \langle [\varphi'], [\varphi'] \rangle_{S}=  (\alpha, \alpha).
\end{equation}
Furthermore, it follows from \eqref{s-B} that the bounded symmetric form
$\sb$ defined on $\dom \sb = \ran S/\sR_0$ satisfies
\[
 \sb[  [\varphi'], [\psi'] ]=(B \alpha, \beta), \quad \{\varphi, \varphi'\}, \{\psi, \psi'\} \in
 S.
\]
In other words,
\begin{equation}\label{beh}
  \langle B_{S} [\varphi'], [\psi'] \rangle_{S}=(B \alpha, \beta),
 \quad \{\varphi, \varphi'\}, \{\psi, \psi'\} \in S.
\end{equation}
Now consider the linear space $\sM_0 \subset \sK$ defined in \eqref{M0Def},
\[
 \sM_0=\{ \alpha \in \sK: \, \alpha \in \ran T, (I+iB)\alpha \in \dom T^* \},
\]
equipped with the original topology of $\sK$. Moreover, define the
mapping $\imath_0$ from $\sM_0$ onto $\ran S/\sR_0$ by
\[
 \imath_0 \alpha =[\varphi'].
\]
It follows from \eqref{iso} that $\imath_0$ is an isometry. Hence
the closure $\imath$ is a closed isometric operator from the Hilbert
space $\sM$, the closure of $\sM_0$, onto the Hilbert space $\sH_S$.
Moreover, \eqref{beh} shows that
\[
 B_m:=P_\sM B\uphar \sM=\imath^* B_S\, \imath \in \mathbf{B}(\sM).
\]
This gives the connection between the space $\sH_S$ and the operator
$B_S$ appearing in the abstract construction in \cite{HSSW17} and
the compression $B_m$ of the prescribed operator $B$ to the subspace
$\sM$.

\begin{remark}
The relations $\wt Q=\imath Q$ from $\sH$ to $\sH_S$ and $\wt J=J
\imath^*$ from $\sH_S$ to $\sH$ are the abstract counterparts of $Q$
and $J$ occurring in \cite{HSSW17} when constructing maximal
sectorial extensions for a sectorial relation $S$.
\end{remark}

\section{\textbf{Form sums of maximal sectorial relations}}\label{sec3}

As indicated in Section \ref{sec1} the treatment of the sum of two closed sectorial forms gives rise to the notion
of form sum extension of the sum of the representing maximal sectorial relations $H_1$ and $H_2$.
In order to the study the form sum extension more closely one needs to study the class of
all maximal sectorial extensions of the sum $H_1+H_2$.

Let $H_{1}$ and $H_{2}$ be maximal sectorial relations in a Hilbert space $\sH$. Then the sum $H_{1}+H_{2}$
is a sectorial relation  in $\sH$ with
\[
\dom (H_{1}+H_{2})=\dom H_{1} \cap \dom H_{2},
\]
so that the sum is not necessarily densely defined. In particular,
$H_1+H_2$ and its closure need not be operators. In fact, one sees that
\begin{equation}
\label{s-sum-mulll}
 \mul (H_{1}+H_{2})=\mul H_{1} + \mul H_{2}.
\end{equation}
To describe the class of maximal sectorial extension of $H_1+H_2$ some basic notations are fixed in Section \ref{sec3.1}.
The Friedrichs extension and Kre\u{\i}n extension of $H_1+H_2$  and, more generally, all extremal maximal sectorial extensions of $H_1+H_2$
and their factorizations are then described in Section \ref{sec3.2} and finally in Section \ref{sec3.3} the form sum extension of $H_1+H_2$
and its relation to the extremal maximal sectorial extensions of $H_1+H_2$ will be investigated.

\subsection{Pairs of maximal sectorial relations}\label{sec3.1}

According to \eqref{s-second} the maximal sectorial relations
$H_{1}$ and $H_{2}$ are decomposed as follows
\begin{equation}\label{H12}
 H_{j} = A_{j}^{\half} (I+iB_{j}) A_{j}^{\half},
\quad 1 \le j \le 2,
\end{equation}
where $A_{j}$ (the real part of $H_{j}$), $1 \le j \le 2,$ are
nonnegative selfadjoint relations in $\sH$ and $B_{j}$, $1 \le j \le
2,$ are (unique) bounded selfadjoint operators in $\sH$; see \eqref{Bunique} in Lemma \ref{s-second}.
Furthermore, if $A_{1}$ and $A_{2}$ are decomposed as
\[
  A_{j}=A_{js} \oplus A_{j \infty}, \quad 1 \le j \le 2,
\]
where $A_{j \infty} =\{0\} \times \mul A_{j}$,
$1 \le j \le 2$, $A_{js}$, $1 \le j \le 2$ are densely
defined nonnegative selfadjoint operators
(defined as orthogonal complements in the graph sense),
then the uniquely determined square roots of $A_{j}$, $1 \le j \le 2$
are given by
\[
   A_{j}^\half=A_{js}^\half \oplus A_{j \infty},
   \quad 1 \le j \le 2.
\]
Associated with $H_{1}$ and $H_{2}$ is the relation
$\Phi$ from $\sH \times \sH$ to $\sH $, defined by
\begin{equation}
\label{s-s-Einz}
 \Phi= \left\{\, \left\{ \{f_{1},f_{2}\}, f_{1}' + f_{2}' \right\}
 :\, \{f_{j},f_{j}' \} \in A_{j}^{\half} , \,
 1 \le j \le 2\,\right\}.
\end{equation}
Clearly, $\Phi$ is a relation whose domain and multivalued part
are given by
\[
 \dom \Phi = \dom A_{1}^{\half} \times \dom A_{2}^{\half},
 \quad \mul \Phi=\mul H_{1} +\mul H_{2}.
\]
The relation $\Phi$ is not necessarily densely defined in $\sH
\times \sH$, so that in general $\Phi^*$ is a relation  as $\mul
\Phi^*=(\dom \Phi)^\perp$.
Furthermore, the adjoint $\Phi^*$ of $\Phi$
is the relation from $\sH$ to $\sH \times \sH$, given by
\begin{equation}
\label{s-s-Zwei}
\Phi^* = \left\{ \left\{ h, \{ h_{1}', h_{2}'\} \right\} \, :
\{h, h_{j}'\} \in A_{j}^{\half}, \, 1 \le j \le 2 \right\}.
\end{equation}
The identity \eqref{s-s-Zwei} shows that the (orthogonal)
operator part
$(\Phi^*)_s$ of $\Phi^*$ is given by:
\begin{eqnarray}
\label{s-s-qus}
 (\Phi^*)_s & = &\left\{ \left\{ h, \{ h_{1}', h_{2}'\} \right\} \,
 : \{h, h_{j}'\} \in A_{js}^{\half}, \, 1 \le j \le 2  \right\}
 \\
& = & \left\{ \left\{ h, \{ A_{1s}^{\half} h, A_{2s}^{\half} h\} \right\}
\, : h \in \dom A_{1}^{\half} \cap \dom A_{2}^{\half} \right\}.
\nonumber
\end{eqnarray}
The identities \eqref{s-s-Zwei} and \eqref{s-s-qus} show that
\[
\dom  \Phi^{*}  = \dom A_{1}^{\half} \cap \dom A_{2}^{\half},
\, \, \ran (\Phi^{*})_s  = \sF_{0},
\, \, \mul \Phi^*=\mul H_{1} \times \mul H_{2},
\]
where the subspace $\sF_{0} \subset \sH \times \sH$ is defined by
\begin{equation}\label{F0}
\sF_{0} = \left\{ \left\{ A_{1s}^{\half} h , A_{2s}^{\half} h
\right\} \, : \, h \in \dom A_{1}^{\half} \cap \dom A_{2}^{\half}
\right\}.
\end{equation}
The closure of $\sF_{0}$ in $\sH \times \sH$ will be denoted by
$\sF$. Define the relation $\Psi$ from $\sH$ to $\sH \times \sH$
by
\begin{equation}
\label{s-s-qu}
 \Psi=\left\{\, \left\{h, \left\{A_{1s}^{\half} h, A_{2s}^{\half}
 h\right\}\right\} :\, h \in \dom H_{1} \cap \dom H_{2} \,\right\} \subset
 \sH \times (\sH \times \sH).
\end{equation}
It follows from this definition that
\[
 \dom \Psi=\dom H_{1} \cap \dom H_{2}, \quad \ran \Psi=\sE_0, \quad
  \mul \Psi=\{0\},
\]
where the space $\sE_0 \subset \sH \times \sH$ is defined by
\begin{equation}\label{E0}
 \sE_0  =  \left\{\, \left\{A_{1s}^{\half}f , A_{2s}^{\half}f\right\}:\,
        f\in \dom H_{1} \cap \dom H_{2} \,\right\}.
\end{equation}
Observe that $\sE_0 \subset \sF_0$.
The closure of $\sE_{0}$ in $\sH \times \sH$ will be denoted by
$\sE$. Hence,
\begin{equation}
\label{s-s-ef}
          \sE \subset \sF.
\end{equation}
Comparison of \eqref{s-s-qus} and \eqref{s-s-qu} shows
\[
\Psi \subset (\Phi^*)_s,
\]
and thus the operator $\Psi$ is closable. It follows from
$\cdom \Psi^*=(\mul \Psi^{**})^\perp$ and
$\mul \Psi^*=(\dom \Psi)^\perp$ that
\[
  \cdom \Psi^*=\sH , \quad \mul \Psi^* =(\dom H_{1} \cap \dom H_{2})^\perp.
\]
Next define the relation $K $ from $\sH \times \sH$ to $\sH$ by
\begin{eqnarray}
\label{s-s-opK}
K & = &
\{ \{ \{
(I+iB_{1})A_{1s}^{\half} f, (I+iB_{2})A_{2s}^{\half}f \}, f_{1}' + f_{2}' \}\, :
 \\
&&
\{ (I+iB_{1})A_{1s}^{\half} f, f_{1}' \} \in A_{1}^{\half},
\{ (I+iB_{2})A_{2s}^{\half} f, f_{2}' \} \in A_{2}^{\half} \}
\nonumber \\
&&
\subset (\sH \times \sH) \times \sH. \nonumber
\end{eqnarray}
Clearly, the domain and multivalued part of $K$ are
given by
\[
 \dom K= \sD_0, \quad \mul K=\mul (H_{1} + H_{2}),
\]
where
\begin{equation}\label{domK}
   \sD_{0}= \left\{\,\{(I+iB_{1}) A_{1s}^{1/2}f,
   (I+iB_{2}) A_{2s}^{1/2}f\} :
   \,  f\in \dom H_{1} \cap \dom H_{2}
  \,\right\} .
\end{equation}
The closure of $\sD_{0}$ in $\sH \times \sH$ will be denoted by
$\sD$.

\begin{lemma}\label{lem3.1}
The relations $K$, $\Phi$, and $\Psi$ satisfy the following
inclusions:
\begin{equation}
\label{s-s-trits}
  K \subset \Phi \subset \Psi^*, \quad \Psi \subset \Phi^* \subset
K^*.
\end{equation}
\end{lemma}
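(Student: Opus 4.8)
The plan is to verify the chain of inclusions \eqref{s-s-trits} essentially by unwinding the explicit descriptions \eqref{s-s-Einz}, \eqref{s-s-Zwei}, \eqref{s-s-qu}, and \eqref{s-s-opK} of $\Phi$, $\Phi^*$, $\Psi$, and $K$, and by computing the two relevant adjoints $\Psi^*$ and $K^*$ (or rather, obtaining the inclusions into them directly from the definition of the adjoint of a relation, without needing the full adjoints). Since the second chain $\Psi \subset \Phi^* \subset K^*$ follows from the first chain $K \subset \Phi \subset \Psi^*$ by taking adjoints and reversing inclusions (adjoints reverse order, and $\Psi^{**}\supset \Psi$, $K^{**}\supset K$), it suffices to establish $K \subset \Phi$ and $\Phi \subset \Psi^*$.

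First I would check $K \subset \Phi$. Take an element $\{\{(I+iB_1)A_{1s}^{\half}f,(I+iB_2)A_{2s}^{\half}f\},\, f_1'+f_2'\}\in K$, so that $\{(I+iB_j)A_{js}^{\half}f, f_j'\}\in A_j^{\half}$ for $j=1,2$ and $f\in\dom H_1\cap\dom H_2$. Setting $g_j:=(I+iB_j)A_{js}^{\half}f$, we have $\{g_j,f_j'\}\in A_j^{\half}$, and therefore $\{\{g_1,g_2\},f_1'+f_2'\}\in\Phi$ directly by the defining formula \eqref{s-s-Einz}. This is immediate; the only thing to note is that the first entry $\{g_1,g_2\}$ of the $K$-element indeed has the shape required to lie in $\dom\Phi=\dom A_1^{\half}\times\dom A_2^{\half}$, which holds since $g_j\in\ran A_j^{\half}=\dom A_j^{\half}$ (using selfadjointness of $A_j^{\half}$, or simply that $\{g_j,f_j'\}\in A_j^{\half}$ forces $g_j\in\dom A_j^{\half}$).

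Next I would check $\Phi \subset \Psi^*$. Recall $\Psi=\{\{h,\{A_{1s}^{\half}h,A_{2s}^{\half}h\}\}: h\in\dom H_1\cap\dom H_2\}$. By definition of the adjoint, $\{\{f_1,f_2\},g\}\in\Psi^*$ precisely when, for every $h\in\dom H_1\cap\dom H_2$,
\[
 (g,h)_\sH = \big(\{f_1,f_2\},\{A_{1s}^{\half}h,A_{2s}^{\half}h\}\big)_{\sH\times\sH} = (f_1,A_{1s}^{\half}h)+(f_2,A_{2s}^{\half}h).
\]
Now take $\{\{f_1,f_2\},f_1'+f_2'\}\in\Phi$ with $\{f_j,f_j'\}\in A_j^{\half}$. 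Since $A_{js}^{\half}$ is (the operator part of) a selfadjoint relation and $h\in\dom A_j^{\half}$, we have $(f_j,A_{js}^{\half}h) = (f_j,A_j^{\half}h) = (f_j',h)$, because $\{f_j,f_j'\}\in A_j^{\half}=(A_j^{\half})^*$. Summing over $j$ gives $(f_1,A_{1s}^{\half}h)+(f_2,A_{2s}^{\half}h) = (f_1'+f_2',h)$ for all such $h$, which is exactly the condition for $\{\{f_1,f_2\},f_1'+f_2'\}\in\Psi^*$. Hence $\Phi\subset\Psi^*$, and taking adjoints yields $\Psi=\Psi^{**}\cap(\cdots)\subset\Phi^*$; more precisely $\Psi\subset\Psi^{**}=(\Psi^*)^*\subset\Phi^*$ since $\Phi\subset\Psi^*$ implies $(\Psi^*)^*\subset\Phi^*$. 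Likewise, $K\subset\Phi$ gives $\Phi^*\subset K^*$. The main (though still modest) obstacle is bookkeeping with the operator parts $A_{js}^{\half}$ versus the full relations $A_j^{\half}$: one must consistently use that $A_j^{\half}$ is selfadjoint, that $\dom A_j^{\half}=\dom A_{js}^{\half}$, and that $A_j^{\half}h = A_{js}^{\half}h$ for $h\in\dom A_j^{\half}$ modulo $\mul A_j^{\half}$, so that the inner products $(f_j,A_{js}^{\half}h)$ and $(f_j',h)$ pair up correctly regardless of multivalued parts; no genuine analytic difficulty arises.
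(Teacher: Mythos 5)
Your proposal is correct and follows essentially the same route as the paper: two of the inclusions are read off directly from the defining formulas and the other two are obtained by taking adjoints and reversing order. The only (immaterial) difference is that the paper verifies $K\subset\Phi$ and $\Psi\subset\Phi^*$ directly and deduces $\Phi\subset\Phi^{**}\subset\Psi^*$ and $\Phi^*\subset K^*$, whereas you verify $\Phi\subset\Psi^*$ by the adjoint pairing and then deduce $\Psi\subset\Psi^{**}\subset\Phi^*$; both computations use the same identity $(f_j',h)=(f_j,A_{js}^{1/2}h)$ coming from selfadjointness of $A_j^{1/2}$.
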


\begin{proof}
To see this note that $K \subset \Phi$ follows from \eqref{s-s-Einz}
and \eqref{s-s-opK}, and that $\Psi \subset \Phi^*$ follows from
\eqref{s-s-Zwei} and \eqref{s-s-qu}. Therefore, also
$\Phi^* \subset K^*$ and $\Phi \subset \Phi^{**} \subset \Psi^*$.
\end{proof}

\subsection{The Friedrichs and the Kre\u{\i}n extensions of $H_1+H_2$}\label{sec3.2}

Let $H_{1}$ and $H_{2}$ be maximal sectorial relations in a Hilbert
space $\sH$. Since (the closure of) the sectorial sum
$H_{1}+H_{2}$ has equal defect numbers, (the closure of) the sum
$H_{1}+H_{2}$ has maximal sectorial extensions in $\sH$.
Two of them, the Friedrichs extension and the Kre\u{\i}n extension $(H_{1}+H_{2})_F$ and $(H_{1}+H_{2})_K$
as maximal sectorial relations have factorizations as $H_1$ and $H_2$ in \eqref{H12}.
A natural problem is to express such factorizations in terms of the initial relations $H_{1}$ and $H_{2}$.

Introduce the orthogonal sum of the operators $B_{1}$ and $B_{2}$ in $\sH \times \sH$ by
\[
 B_\oplus:=B_1 \oplus B_2 =
\begin{pmatrix}
B_{1} & 0 \\
0 & B_{2}
\end{pmatrix}.
\]
This shorthand notation is used to shorten some of the forthcoming formulas.
The aim in the description of $(H_{1}+H_{2})_F$ and $(H_{1}+H_{2})_K$ is to keep the presentation
as explicit as possible by incorporating the initial data on the factorizations \eqref{H12} of $H_1$ and $H_2$
directly via the mappings $\Phi$, $\Psi$, and $K$ in Subsection \ref{s-sum-mulll}.

Now proceed to the construction of the Friedrichs extension for the sum $H_1+H_2$.

\begin{theorem}\label{ss-twee}
Let $H_{1}$ and $H_{2}$ be maximal sectorial and let $\Psi$ be defined by \eqref{s-s-qu}.
The Friedrichs extension of $H_{1}+H_{2}$ is given by
\[
 (H_{1}+H_{2})_F=\Psi^* (I + i B_\oplus) \Psi^{**}
\]
and the corresponding form is given by
\[
\st_{F} [f,g]= ((I + iB_\oplus ) \Psi^{**} f, \Psi^{**} g),
\quad f, \, g  \in \dom \st_{F} = \dom \Psi^{**}.
\]
\end{theorem}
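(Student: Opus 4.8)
The plan is to verify that $H_F:=\Psi^*(I+iB_\oplus)\Psi^{**}$ is a maximal sectorial extension of $H_1+H_2$ satisfying $\dom H_F\subset\dom\st_{(H_1+H_2)_F}$, and then to invoke the characterization of the Friedrichs extension already used in the proof of Theorem~\ref{s-F-N}(i): any maximal sectorial extension $H$ of a sectorial relation $S$ with $\dom H\subset\dom\st_{S_F}$ coincides with $S_F$ (see \cite[Theorem~7.3]{HSSW17}). The formula for the form will then come out of the same approximation argument.

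First I would record the identity $H_1+H_2=K(I+iB_\oplus)\Psi$. This is read off directly from the definitions \eqref{s-s-opK}, \eqref{s-s-qu} and the factorizations \eqref{H12}, \eqref{sss+}, in the same way as the identity $H_1+H_2=\Phi(I+iB_\oplus)\Phi^*$: both sides have domain $\dom H_1\cap\dom H_2$ and, at $f$ in this set, the value $(H_1)_s f+(H_2)_s f+\mul H_1+\mul H_2$. Since $\Psi^*$ is closed, Proposition~\ref{SectorialCor}, applied with $T=\Psi^*$ and $B=B_\oplus$ to the factorization $T(I+iB)T^*$, shows that $H_F=\Psi^*(I+iB_\oplus)\Psi^{**}$ is maximal sectorial. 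By Lemma~\ref{lem3.1} one has $K\subset\Phi\subset\Psi^*$, and trivially $\Psi\subset\Psi^{**}$; hence $H_1+H_2=K(I+iB_\oplus)\Psi\subset\Psi^*(I+iB_\oplus)\Psi^{**}=H_F$, so $H_F$ is a maximal sectorial extension of $H_1+H_2$.

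The main step is to prove $\dom H_F\subset\dom\st_{(H_1+H_2)_F}$. Let $f\in\dom H_F$; since $\Psi^{**}$ is an operator this gives $f\in\dom\Psi^{**}$, and since $\Psi$ is a closable operator ($\mul\Psi=\{0\}$ and $\Psi\subset(\Phi^*)_s$) there is a sequence $f_n\in\dom\Psi=\dom H_1\cap\dom H_2$ with $f_n\to f$ in $\sH$ and $\Psi f_n=\{A_{1s}^{\half}f_n,A_{2s}^{\half}f_n\}\to\Psi^{**}f$ in $\sH\times\sH$; in particular each sequence $(A_{js}^{\half}f_n)$, $j=1,2$, is Cauchy. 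Using $(H_j)_s=A_{js}^{\half}(I+iB_j)A_{js}^{\half}$ (cf.\ \eqref{sss+}) and the selfadjointness of $A_{js}^{\half}$, one computes for the closable sectorial form $\st_{H_1+H_2}$ of Lemma~\ref{kak0}
\[
 \st_{H_1+H_2}[f_n-f_m]=\sum_{j=1}^{2}\big((I+iB_j)A_{js}^{\half}(f_n-f_m),\,A_{js}^{\half}(f_n-f_m)\big),
\]
which tends to $0$ as $n,m\to\infty$ because each $I+iB_j$ is bounded. Thus $f_n\to f$ in $\sH$ with $\st_{H_1+H_2}[f_n-f_m]\to0$, so by the construction of the closure $\st_{(H_1+H_2)_F}$ one obtains $f\in\dom\st_{(H_1+H_2)_F}$. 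Hence $\dom H_F\subset\dom\st_{(H_1+H_2)_F}$, and since $H_F$ is a maximal sectorial extension of $H_1+H_2$ the cited characterization yields $H_F=(H_1+H_2)_F$.

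It remains to identify the form. On $\dom\Psi=\dom(H_1+H_2)$ the form norm of $\st_{H_1+H_2}$ equals the graph norm of $\Psi$, since $\RE\st_{H_1+H_2}[f]=\sum_{j}\|A_{js}^{\half}f\|^2=\|\Psi f\|^2$; because $\dom(H_1+H_2)$ is a core for $\st_{(H_1+H_2)_F}$ and $\Psi$ is closable, the closed form $\st_F=\st_{(H_1+H_2)_F}$ therefore has domain $\dom\Psi^{**}$, and passing to the limit in $\st_{H_1+H_2}[f_n,g_n]$ (as in the displayed computation) gives $\st_F[f,g]=((I+iB_\oplus)\Psi^{**}f,\Psi^{**}g)$ for $f,g\in\dom\Psi^{**}$. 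I expect the only real friction to be in the bookkeeping with operator parts and multivalued parts needed for the identity $H_1+H_2=K(I+iB_\oplus)\Psi$ and for the form-norm computation; the rest of the argument runs parallel to the proof of Theorem~\ref{s-F-N}(i), with $\Psi$ playing the role of $Q$.
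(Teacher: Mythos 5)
Your proof is correct, and its skeleton matches the paper's: both establish that $\Psi^*(I+iB_\oplus)\Psi^{**}$ is a maximal sectorial extension of $H_1+H_2$ and then use an approximating sequence $f_n\in\dom\Psi=\dom(H_1+H_2)$ with $\Psi f_n\to\Psi^{**}f$ to force the identification with the Friedrichs extension. The difference lies in how that identification is closed. The paper works at the level of pairs: for $\{f,g\}\in\Psi^*(I+iB_\oplus)\Psi^{**}$ with intermediate element $h$ it derives, through the chain of identities \eqref{ss-ca3}--\eqref{ss-ca4}, the relation $\|h-\Psi f_n\|^2=\RE(g-(H_{1s}+H_{2s})f_n,f-f_n)$ and then concludes $\{f,g\}\in(H_1+H_2)_F$ from the graph characterization of the Friedrichs extension (cf. \cite[Eq. (7.2)]{HSSW17}). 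You instead track only domains: the Cauchy property of $(\Psi f_n)$ already gives $\st_{H_1+H_2}[f_n-f_m]\to0$, hence $f\in\dom\st_{(H_1+H_2)_F}$, and the criterion of \cite[Theorem~7.3]{HSSW17} finishes the job --- exactly the device the paper itself uses in the proof of Theorem~\ref{s-F-N}(i), with $\Psi$ in the role of $Q$. Your route is shorter because it never needs to identify the limit of $\Psi f_n$ with the intermediate element $h$; what the paper's longer computation buys is that the membership of the pair $\{f,g\}$ and the form formula emerge in one stroke, whereas you supply the form identification as a separate (but correct) step via the equality $\RE\st_{H_1+H_2}[f]=\|\Psi f\|^2$ of the form norm with the graph norm of $\Psi$. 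Your preliminary identity $H_1+H_2=K(I+iB_\oplus)\Psi$ is a routine verification and is implicit in the paper's direct check that the relevant pairs lie in $K\subset\Psi^*$; no gaps.
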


\begin{proof}
First it is shown that the relation
$\Psi^* (I + i B_\oplus ) \Psi^{**}$ extends the relation
$S:=H_{1} + H_{2}$. Let
$\{ h, h_{1}'+h_{2}' \} \in  H_{1} + H_{2}$
for some $\{ h, h_{1}' \} \in H_{1}$ and
$\{ h, h_{2}' \} \in H_{2}$.
Thus,
\[
\{ h, \{A_{1s}^{1/2} h, A_{2s}^{1/2} h \}
\in \Psi \subset \Psi^{**},
\]
and also
\[
\{ \{ (I + iB_{1}) A_{1}^{1/2} h, (I + iB_{2}) A_{2}^{1/2} h\},
h_{1}'+h_{2}'  \} \in K \subset \Psi^{*},
\]
as can be verified directly
\[
\langle
\{
\{(I + iB_{1})A_{1s}^{1/2} h, (I + iB_{2})A_{2}^{1/2} h \},
h_{1}'+h_{2}' \},
\{ \varphi, \{A_{1s}^{1/2} \varphi, A_{2s}^{1/2} \varphi \} \}
\rangle =0,
\]
for all $\varphi \in \dom S=\dom H_{1} \cap \dom H_{2}$.
Therefore
$S \subset \Psi^{*} (I + i B_\oplus ) \Psi^{**}$.

Now let $\{ f, g\} \in \Psi^{*} (I + i B_\oplus ) \Psi^{**}$,
so that $\{f, h \} \in \Psi^{**}$ and
$\{ (I +i B_\oplus )h, g\} \in \Psi^{*}$ for some
$h \in \sH \times \sH$. Since $\Psi^{**}$ is the closure
of $\Psi$ there exists a sequence of elements
$f_{n} \in \dom \Psi=\dom S$ such that
\begin{equation}
\label{ss-mde*}
f_{n} \to f, \quad \Psi f_{n} \to h,
\quad
\text{as } \, n \to \infty.
\end{equation}
It follows from $\{ f,h\} \in \Psi^{**}$ and
$\{ (I + i B_\oplus )h, g\} \in \Psi^{*}$ that
\[
(g,f) = (h,h) + i (B_\oplus  h,h),
\]
which implies that
\begin{equation}
\label{ss-ca3}
\RE (g,f) = (h,h).
\end{equation}
Similarly it follows from $\{ f,h\} \in \Psi^{**}$
and
\[
\{ \{ (I + iB_{1}) A_{1s}^{1/2} f_{n},
(I + iB_{2}) A_{2s}^{1/2} f_{n} \} ,
(H_{1s}+H_{2s}) f_{n}
\} \in \Psi^{*}
\]
that
\begin{equation}
\label{ss-ajuta3}
((H_{1s} + H_{2s}) f_{n}, f)
=
( \{ A_{1s}^{1/2} f_{n}, A_{2s}^{1/2} f_{n} \},h )
+ i ( \{ B_{1} A_{1s}^{1/2} f_{n}, B_{2} A_{2s}^{1/2} f_{n} \},h ).
\end{equation}
Likewise, it follows from
$\{ f_{n}, \{ A_{1s}^{1/2} f_{n},
A_{2s}^{1/2} f_{n} \} \} \in \Psi \subset \Psi^{**}$
and
$\{ (I + i B_\oplus ) h, g\} \in \Psi^{*}$ that
\begin{equation}
\label{ss-ajuta4}
(g, f_{n})
=
( h, \{ A_{1s}^{1/2} f_{n}, A_{2s}^{1/2} f_{n} \})
+ i ( B_\oplus  h, \{ A_{1s}^{1/2} f_{n}, A_{2s}^{1/2} f_{n} \}).
\end{equation}
A combination of \eqref{ss-ajuta3} and
\eqref{ss-ajuta4} leads to
\begin{eqnarray}
\label{ss-ca4}
\RE (
((H_{1s} + H_{2s}) f_{n}, f) +
(g, f_{n})
)
& = &
\RE (
( h, \{ A_{1s}^{1/2} f_{n}, A_{2s}^{1/2} f_{n} \})
\nonumber \\
& &
+
( \{ A_{1s}^{1/2} f_{n}, A_{2s}^{1/2} f_{n} \},h )
).
\end{eqnarray}
This leads to the following identity
\begin{eqnarray}
  \|h- \Psi f_{n} \|^{2}
 &= & \|h \|^{2}
   - ( h, \{ A_{1s}^{1/2} f_{n}, A_{2s}^{1/2} f_{n} \})
   \nonumber \\
 &   & -
   (\{ A_{1s}^{1/2} f_{n}, A_{2s}^{1/2} f_{n} \} , h)
   + \|\{ A_{1s}^{1/2} f_{n}, A_{2s}^{1/2} f_{n} \} \|^{2}
   \nonumber \\
 & = & \|h \|^{2}
   - \RE ( h, \{ A_{1s}^{1/2} f_{n}, A_{2s}^{1/2} f_{n} \})
   \nonumber \\
 &   & -
   \RE (\{ A_{1s}^{1/2} f_{n}, A_{2s}^{1/2} f_{n} \} , h)
   + \|\{ A_{1s}^{1/2} f_{n}, A_{2s}^{1/2} f_{n} \} \|^{2}
   \nonumber \\
 & = & \RE (g - (H_{1s} + H_{2s}) f_{n}, f-f_n),
 \nonumber
\end{eqnarray}
where \eqref{ss-ca3}, and \eqref{ss-ca4} have been used,
respectively. Therefore \eqref{ss-mde*} implies that
\begin{equation}
\label{ss-mde**}
  f_{n} \to f ,
  \quad
  \RE (g - (H_{1s} + H_{2s}) f_{n}, f-f_n) \to 0.
\end{equation}
Since $f_n \in \dom S$, it follows from \eqref{ss-mde**} and the definition of $S_F$ that
$\{f,g\} \in S_{F}$.
Hence, $\Psi^{*}(I + iB)\Psi^{**} \subset S_{F}$, and
since $\Psi^{*}(I + iB)\Psi^{**}$ and $S_{F}$ are both
maximal sectorial, the identity
$\Psi^{*}(I + iB)\Psi^{**}=S_{F}$ follows.
The statement concerning the associated closed form $\st_{F}$ follows from
the first representation theorem and the definition of $S_F$; cf. \cite[Theorem 5.1]{HSSW17}.
\end{proof}

Next the construction of the Kre\u{\i}n extension for the sum $H_1+H_2$ is given.

\begin{theorem}\label{KVNext}
Let $H_{1}$ and $H_{2}$ be maximal sectorial and let $K$ be defined by \eqref{s-s-opK}.
The Kre\u{\i}n extension of $H_{1} + H_{2}$ is given by
\[
 (H_{1} + H_{2})_K=K^{**}(I + i B_\oplus)K^{*}.
\]
If, in addition, $\sE=\clos\sE_0$ and $\sD=\clos\sD_0$ (see \eqref{E0}, \eqref{domK})
satisfy the equality $\sE=\sD$ then the corresponding closed sectorial form is given by
\[
\st_{K} [f,g]= ((I + iB_\oplus ) (K^{*})_{s} f, (K^{*})_{s} g),
\quad
f, \, g  \in \dom \st_{K} = \dom K^{*}.
\]
\end{theorem}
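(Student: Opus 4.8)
The plan is to mirror the proof of Theorem~\ref{s-F-N}(ii) within the setup of Theorem~\ref{ss-twee}, using the characterization of the Kre\u{\i}n extension from \cite[Theorem~7.5]{HSSW17}: a maximal sectorial extension $H$ of $S:=H_1+H_2$ coincides with $S_K$ precisely when $\ran H\subset\dom\st_{(S^{-1})_F}$. First I would note that $H:=K^{**}(I+iB_\oplus)K^{*}$ is maximal sectorial by Proposition~\ref{SectorialCor} (with the closed relation $K^{**}$ in the role of $T$), and that it extends $S$: by Lemma~\ref{lem3.1} one has $\Psi\subset\Phi^{*}\subset K^{*}$ and $K\subset K^{**}$, so for $\{h,h_1'+h_2'\}\in H_1+H_2$ with $\{h,h_j'\}\in H_j$ one gets $\{h,\Psi h\}\in K^{*}$ and $\{(I+iB_\oplus)\Psi h,h_1'+h_2'\}\in K\subset K^{**}$ (the latter by \eqref{s-s-opK}), whence $\{h,h_1'+h_2'\}\in H$; the purely multivalued part $\{0\}\times\mul(H_1+H_2)$ lies in $H$ since $\mul(H_1+H_2)=\mul K\subset\mul K^{**}$.

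Next I would take $\{f,g\}\in H$, so $\{f,u\}\in K^{*}$ and $\{(I+iB_\oplus)u,g\}\in K^{**}$ for some $u$. Since $K^{**}=\clos K$, the pair $\{(I+iB_\oplus)u,g\}$ is approximated by elements of $K$, which by \eqref{s-s-opK} have the form $\{(I+iB_\oplus)\Psi\varphi_n,g_n\}$ with $\varphi_n\in\dom H_1\cap\dom H_2$ and $g_n\in(H_{1s}+H_{2s})\varphi_n+\mul(H_1+H_2)$; thus $g_n\to g$ and, as $I+iB_\oplus$ is boundedly invertible, $\Psi\varphi_n\to u$. Then $\{\varphi_n,g_n\}\in H_1+H_2=S$, i.e.\ $\{g_n,\varphi_n\}\in S^{-1}$, and using $\mul H_j\perp\dom H_j$ together with the factorizations \eqref{H12} one obtains the key identity
\[
 \RE\st_{S^{-1}}[g_n-g_m,g_n-g_m]=\RE(\varphi_n-\varphi_m,g_n-g_m)
 =\|A_{1s}^{\half}(\varphi_n-\varphi_m)\|^2+\|A_{2s}^{\half}(\varphi_n-\varphi_m)\|^2=\|\Psi\varphi_n-\Psi\varphi_m\|^2,
\]
which tends to $0$ as $n,m\to\infty$. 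Combined with $g_n\to g$ and the sectoriality of $\st_{S^{-1}}$, this shows that $(g_n)$ is Cauchy in the form norm of $\st_{S^{-1}}$, hence $g\in\dom\st_{(S^{-1})_F}$. Therefore $\ran H\subset\dom\st_{(S^{-1})_F}$ and $H=S_K$.

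For the assertion about the form, assume $\sE=\sD$. Since $\sD_0=(I+iB_\oplus)\sE_0$ and $I+iB_\oplus$ is boundedly invertible, one has $\sE=(I+iB_\oplus)^{-1}\sD$, so $\sE=\sD$ is equivalent to $\sD=\cdom K$ (equivalently $\mul K^{*}=\sD^{\perp}$) being invariant under $B_\oplus$. I would then consider the form $\st_0[f,g]=((I+iB_\oplus)(K^{*})_s f,(K^{*})_s g)$ with $\dom\st_0=\dom K^{*}=\dom(K^{*})_s$. Its real part $((K^{*})_s f,(K^{*})_s g)$ is the closed form of the nonnegative selfadjoint relation $((K^{*})_s)^{*}(K^{*})_s$, whose square root has domain $\dom(K^{*})_s$, and its imaginary part is bounded relative to the real part since $\|B_\oplus\|=\max(\|B_1\|,\|B_2\|)$; hence $\st_0$ is a closed sectorial form. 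By the first representation theorem the maximal sectorial relation associated with $\st_0$ is $((K^{*})_s)^{*}(I+iB_\oplus)(K^{*})_s$, and using $\ran(K^{*})_s\subset\sD$ together with the $B_\oplus$-invariance of $\sD^{\perp}=\mul K^{*}$ one verifies that this equals $K^{**}(I+iB_\oplus)K^{*}$, which is $S_K$ by the first part. Consequently $\st_{S_K}=\st_0$, which is the asserted formula with $\dom\st_K=\dom K^{*}$.

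The step I expect to be the main obstacle is the bookkeeping of multivalued parts: in the first part one must describe precisely the elements of $K$ and $K^{**}$ and control $\mul K$, $\mul K^{*}$, $\mul K^{**}$ so that the approximation and the inclusion $\{\varphi_n,g_n\}\in S$ are legitimate; and in the second part the crux is to see why the hypothesis $\sE=\sD$ is exactly what is needed, namely that it amounts to the $B_\oplus$-invariance of $\mul K^{*}$, which is what permits the factor $I+iB_\oplus$ to be moved past the multivalued part of $K^{*}$, so that $K^{**}(I+iB_\oplus)K^{*}$ collapses to $((K^{*})_s)^{*}(I+iB_\oplus)(K^{*})_s$ and the form of $S_K$ has domain exactly $\dom K^{*}$. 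Without the hypothesis $\sE=\sD$ the relation $K^{**}(I+iB_\oplus)K^{*}$ may strictly contain $((K^{*})_s)^{*}(I+iB_\oplus)(K^{*})_s$, and this simple description of the form fails.
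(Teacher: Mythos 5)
Your proposal is correct and takes essentially the same route as the paper: the inclusion $H_1+H_2\subset K^{**}(I+iB_\oplus)K^{*}$ via the explicit description of $K$ and $K^{*}$, the reverse containment in $S_K$ by approximating elements of $K^{**}$ by elements of $K$ and showing $(g_n)$ is Cauchy in the form sense of $\st_{S^{-1}}$ (the paper packages the same computation as $\|h-\Psi f_n\|^{2}=\RE(g-(f_{n1}'+f_{n2}'),f-f_n)\to 0$), and the form statement via the observation that $\sE=\sD$ makes $\cdom K$, hence $\mul K^{*}$, invariant under $B_\oplus$ so that \cite[Theorem 5.1]{HSSW17} applies. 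Your identification of the $B_\oplus$-invariance of $\mul K^{*}$ as the precise role of the hypothesis $\sE=\sD$ matches the paper's argument exactly.
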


\begin{proof}
Assume that $\{ f, f_{1}' +f_{2}'\} \in  S=H_{1} + H_{2}$,
with $\{ f, f_{1}'\} \in  H_{1}$ and $\{ f, f_{2}'\} \in  H_{2}$.
This implies that
\[
\{ \{ (I + iB_{1})A_{1s}^{1/2} f, (I + iB_{2})A_{2s}^{1/2} f\},
f_{1}' +f_{2}'\} \in K \subset K^{**}.
\]
Moreover,
\[
\{ f, \{A_{1s}^{1/2} f, A_{2s}^{1/2} f\}\} \in K^{*},
\]
as can be verified directly
\[
\langle
\{ f, \{A_{1s}^{1/2} f, A_{2s}^{1/2} f \} \},
\{ \{(I + iB_{1}) A_{1s}^{1/2} \varphi,
   (I + iB_{2})  A_{2s}^{1/2} \varphi \},
   f_{1}' +f_{2}' \}
\rangle =0 ,
\]
for all $\varphi \in \dom S=\dom H_{1} \cap \dom H_{2}$.
Therefore $S \subset K^{**} (I + i B_\oplus ) K^{*}$.

Now assume that $\{ f, g\} \in K^{**} (I + i B_\oplus) K^{*}$.
This means that $\{f, h \} \in K^{*}$ and
$\{ (I +i B_\oplus )h, g\} \in K^{**}$ for some
$h \in \sH \times \sH$. Since $K^{**}$ is the closure
of $K$ there exists a sequence of elements
$\{ \varphi_{n} , \varphi_{n}'\} \in K$ with
\[
 \{ \varphi_{n}, \varphi_{n}' \} \rightarrow
 \{ (I + i B_\oplus ) h, g\} \in K^{**}, \quad \text{as } \, n \to \infty.
\]
Clearly,
\[
\varphi_{n}
=
\{ (I + i B_{1}) A_{1s}^{1/2} f_{n},
   (I + i B_{2}) A_{2s}^{1/2} f_{n} \},
   \quad \varphi_{n}' = f_{n1}' + f_{n2}'
\]
for some $\{ f_{n} , f_{n1}' \} \in H_{1}$
and $\{ f_{n} , f_{n2}' \} \in H_{2}$.
Therefore,
\begin{equation}
\label{ss-mda*}
\{ A_{1s}^{1/2} f_{n}, A_{2s}^{1/2} f_{n} \}
\rightarrow h, \quad
f_{n1}' + f_{n2}' \rightarrow g, \quad
\text{as} \quad n \to \infty.
\end{equation}
It follows from $\{ f,h\} \in K^{*}$ and
$\{ (I + i B_\oplus )h, g\} \in K^{**}$ that
\[
(g,f) = (h,h) + i (B_\oplus  h,h),
\]
which implies that
\begin{equation}
\label{ss-ca1}
\RE (g,f) = (h,h).
\end{equation}
On the other hand, $\{ f,h\} \in K^{*}$
and $\{ \varphi_{n}, \varphi_{n}' \} \in K^{**}$ leads to
\begin{equation}
\label{ss-ajuta1}
(f_{n1}' +f_{n2}' , f)
=
( \{ A_{1s}^{1/2} f_{n}, A_{2s}^{1/2} f_{n} \},h )
+ i ( B_\oplus  \{ A_{1s}^{1/2} f_{n}, A_{2s}^{1/2} f_{n} \},h ).
\end{equation}
Similarly it follows from
$\{ f_{n}, \{ A_{1s}^{1/2} f_{n},
A_{2s}^{1/2} f_{n} \} \} \in K^{*}$
and
$\{ (I + i B_\oplus ) h, g\} \in K^{**}$ that
\begin{equation}
\label{ss-ajuta2}
(g, f_{n})
=
( h, \{ A_{1s}^{1/2} f_{n}, A_{2s}^{1/2} f_{n} \})
+ i ( B_\oplus  h, \{ A_{1s}^{1/2} f_{n}, A_{2s}^{1/2} f_{n} \}).
\end{equation}
Now a combination of \eqref{ss-ajuta1} and
\eqref{ss-ajuta2} shows that
\begin{eqnarray}
\label{ss-ca2}
\RE (
(f_{n1}' + f_{n2}', f) +
(g, f_{n})
)
& = &
\RE (
( h, \{ A_{1s}^{1/2} f_{n}, A_{2s}^{1/2} f_{n} \})
\nonumber \\
&&
+
( \{ A_{1s}^{1/2} f_{n}, A_{2s}^{1/2} f_{n} \},h )
).
\end{eqnarray}
This leads to the following identity
\begin{eqnarray}
 \|h- \{ A_{1s}^{1/2} f_{n}, A_{2s}^{1/2} f_{n} \} \|^{2}
 & =  & \|h \|^{2}
   - ( h, \{ A_{1}^{1/2} f_{n}, A_{2}^{1/2} f_{n} \})
   \nonumber \\
 &    & - (\{ A_{1}^{1/2} f_{n}, A_{2}^{1/2} f_{n} \} , h)
   + \|\{ A_{1}^{1/2} f_{n}, A_{2}^{1/2} f_{n} \} \|^{2}
   \nonumber \\
 & = & \|h \|^{2}
   - \RE ( h, \{ A_{1}^{1/2} f_{n}, A_{2}^{1/2} f_{n} \})
   \nonumber \\
 &   & -
   \RE (\{ A_{1s}^{1/2} f_{n}, A_{2s}^{1/2} f_{n} \} , h)
   + \|\{ A_{1s}^{1/2} f_{n}, A_{2s}^{1/2} f_{n} \} \|^{2}
   \nonumber \\
 & = & \RE (g -  (f_{n1}' +f_{n2}' ), f-f_n),
   \nonumber
\end{eqnarray}
where \eqref{ss-ca1}, and \eqref{ss-ca2} have been used,
respectively. Therefore \eqref{ss-mda*} implies that
\begin{equation}
\label{ss-mda**}
  f_{n1}' + f_{n2}' \to g ,
  \quad
  \RE (g - ( f_{n1}' + f_{n2}') , f-f_n) \to 0.
\end{equation}
Since $\{ f_n,  f_{n1}' + f_{n2}' \} \in  S$,
the relation \eqref{ss-mda**} implies that
$\{f,g\} \in S_K$.
Hence, $K^{**}(I + iB_\oplus )K^* \subset S_K$, and
since $K^{**} (I + i B_\oplus )K^{*}$ and $S_K$ are both
maximal sectorial (see Proposition \ref{SectorialCor}), the identity
$K^{**} (I + i B)K^{*}=S_K$ follows.

As to the statement concerning the form $\st_{K}$ observe that
\begin{equation}\label{E0D0}
  \sD_0=\dom K=(I+i B_\oplus) \sE_0;
\end{equation}
see \eqref{E0}, \eqref{domK}. Therefore, the assumption $\sE=\sD$
implies that $\sD=\cdom K$ is invariant under the selfadjoint operator $B_\oplus$.
Then also $\mul K^*=\sD^\perp$ is invariant under $B_\oplus$ and hence it follows from
\cite[Theorem 5.1]{HSSW17} that $K^{**}(I + i B_\oplus)K^{*}=((K^{*})_{s})^*(I + i B_\oplus)(K^{*})_{s}$
and that the corresponding closed form $\st_K$ is determined by the operator part $(K^{*})_{s}$ of $K^*$.
\end{proof}

The product $K^{**} (I+iB_\oplus ) K^*$ is a maximal sectorial relation
whose multivalued part is given by $\mul K^{**}$. Therefore, it
follows from Theorem \ref{KVNext} that
\[
\mul (H_{1}+H_{2})_K=\mul K^{**} .
\]
Recall from \cite[Theorem 1]{Ar} (cf. \cite[Theorem 7.6]{HSSW17}) that the Kre\u{\i}n extension $S_K$ has the largest form domain
among all maximal sectorial extensions of a sectorial relation $S$.
In particular, this implies that the relation $S$ is ``\textit{sectorially closable}'', i.e.,
$S$ has a maximal sectorial operator extension if and only if
the Kre\u{\i}n extension $S_K$ is an operator, which in the present case holds for $S=H_1+H_2$ if and only
if the relation $K$ is a closable operator or, equivalently, $K^*$ is densely defined.

Likewise, the product $\Psi^* (I + iB_\oplus ) \Psi^{**}$ is a
maximal sectorial relation whose multivalued part is given by
$\mul \Psi^{*}=(\dom \Psi)^\perp$, so that it follows from Theorem~\ref{ss-twee} that
\[
 \mul (H_{1}+H_{2})_F=(\dom H_{1} \cap \dom H_{2})^\perp.
\]
Hence, when $H_{1}+H_{2}$ is densely defined, then $H_{1}+H_{2}$
is automatically an operator and all maximal sectorial extensions
are operators. The orthogonal operator part of
$\Psi^* (I+iB_\oplus ) \Psi^{**}$ is the maximal sectorial operator
corresponding to the closed form
\[
  ((I+iB_\oplus )\Psi^{**} h, \Psi^{**} k), \quad h,k \in \dom \Psi^{**}.
\]

The description of the closed sectorial form $\st_K$ associated with the Kre\u{\i}n extension $(H_1+H_2)_K$ in Theorem \ref{KVNext} is stated under
the additional condition $\sE=\sD$. When this condition fails to hold the description of the form $\st_K$ becomes more involved and
will be treated elsewhere; see \cite{HS2019}.
The form $\st_K$ can be used to give a complete description of all \textit{extremal maximal sectorial extensions} of the sum $H_1+H_2$.
Namely, a maximal sectorial extension $H$ of a sectorial relation $S$ is extremal precisely when the corresponding closed sectorial form
$\st_H$ is a restriction of the closed sectorial form $\st_K$ generated by the Kre\u{\i}n extension $S_K$ of $S$;
see e.g. \cite[Definition 7.7, Theorems~8.4,~8.5]{HSSW17}.
Therefore, Theorem \ref{KVNext} implies the following description of all extremal maximal sectorial extensions of $H_1+H_2$.

\begin{proposition}
\label{s-sum-caracter} Let $H_{1}$ and $H_{2}$ be
maximal sectorial relations in a Hilbert space $\sH$ and assume that $\sE=\clos\sE_0$ and $\sD=\clos\sD_0$ (see \eqref{E0}, \eqref{domK})
satisfy the equality $\sE=\sD$.
Then the following statements are equivalent:
\begin{enumerate}
\def\labelenumi{\rm (\roman{enumi})}

\item $\widetilde{H}$ is an extremal maximal sectorial extension of $H_{1}+H_{2}$;

\item $\widetilde{H} = T_{\sL}^{\ast} (I+iB_\oplus ) T_{\sL}^{\ast \ast}$, where $T_{\sL}$ is the restriction of the operator part
$(K^*)_s$ to a linear subspace $\sL$ satisfying
$$\dom (H_{1}+H_{2}) \subset \sL \subset \dom K^*.$$
\end{enumerate}
\end{proposition}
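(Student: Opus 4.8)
The plan is to reduce the statement to the abstract description of extremal maximal sectorial extensions recalled just above: a maximal sectorial extension $\wt H$ of a sectorial relation $S$ is extremal if and only if its closed form $\st_{\wt H}$ is a restriction of the closed form $\st_{S_K}$ of the Kre\u{\i}n extension $S_K$; see \cite[Theorems~8.4,~8.5]{HSSW17}. Here $S=H_1+H_2$, and under the standing hypothesis $\sE=\sD$ Theorems~\ref{ss-twee} and~\ref{KVNext} supply the explicit data
\[
 S_F=\Psi^*(I+iB_\oplus)\Psi^{**},\qquad S_K=((K^*)_s)^*(I+iB_\oplus)(K^*)_s,
\]
together with $\st_{S_K}[f,g]=((I+iB_\oplus)(K^*)_sf,(K^*)_sg)$ on $\dom\st_{S_K}=\dom K^*=\dom(K^*)_s$. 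Since $\RE((I+iB_\oplus)u,u)=\|u\|^2$, the form norm $(\RE\st_{S_K}[f]+\|f\|^2)^{1/2}$ on $\dom K^*$ coincides with the graph norm of the closed operator $(K^*)_s$; this elementary remark is what will let me turn the abstract condition ``$\st_{\wt H}$ is a closed restriction of $\st_{S_K}$'' into the concrete condition ``$\sL$ is a (graph\nobreakdash-)closed subspace between $\dom(H_1+H_2)$ and $\dom K^*$''.

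First I would locate $\Psi$ inside $(K^*)_s$. By Lemma~\ref{lem3.1} one has $\Psi\subset\Phi^*\subset K^*$, with $\Psi$ an operator and $\ran\Psi=\sE_0\subset\sE=\sD$. As $\mul K^*=(\dom K)^\perp=\sD^\perp$, the operator part of $K^*$ acts by $(K^*)_sf=P_{\sD}h$ for $\{f,h\}\in K^*$, and since $\ran\Psi\subset\sD$ this forces $\Psi=(K^*)_s\uphar\dom\Psi$, hence $\Psi\subset(K^*)_s$; passing to closures and adjoints, $\Psi^{**}\subset(K^*)_s$ and $K\subset K^{**}\subset((K^*)_s)^*\subset\Psi^*$. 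Consequently, for any linear subspace $\sL$ with $\dom(H_1+H_2)=\dom\Psi\subset\sL\subset\dom K^*$ the operator $T_\sL=(K^*)_s\uphar\sL$ is closable and satisfies $\Psi\subset T_\sL\subset(K^*)_s$, so that $\Psi^{**}\subset T_\sL^{**}\subset(K^*)_s$ and $K\subset K^{**}\subset T_\sL^*\subset\Psi^*$. In particular the extreme choices $\sL=\dom(H_1+H_2)$ and $\sL=\dom K^*$ reproduce $S_F$ and $S_K$, which is the consistency check one wants.

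For (ii)$\Rightarrow$(i): given such an $\sL$, Proposition~\ref{SectorialCor} shows $\wt H:=T_\sL^*(I+iB_\oplus)T_\sL^{**}$ is maximal sectorial, and (as in Proposition~\ref{Msecforms}, cf. Lemma~\ref{s-second}) it corresponds to the closed form $\st_{\wt H}[f,g]=((I+iB_\oplus)T_\sL^{**}f,T_\sL^{**}g)$ with $\dom\st_{\wt H}=\dom T_\sL^{**}$. The inclusions $K\subset T_\sL^*$ and $\Psi\subset T_\sL^{**}$, together with the factorization $H_1+H_2=K(I+iB_\oplus)\Psi$ implicit in the proofs of Theorems~\ref{ss-twee} and~\ref{KVNext}, give $S\subset\wt H$, so $\wt H$ is a maximal sectorial extension of $S$. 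Finally $T_\sL^{**}\subset(K^*)_s$ yields $\dom\st_{\wt H}\subset\dom K^*$ and $\st_{\wt H}[f,g]=((I+iB_\oplus)(K^*)_sf,(K^*)_sg)=\st_{S_K}[f,g]$ on $\dom\st_{\wt H}$; thus $\st_{\wt H}$ is a restriction of $\st_{S_K}$ and $\wt H$ is extremal by the cited characterization.

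For (i)$\Rightarrow$(ii): let $\wt H$ be an extremal maximal sectorial extension of $S$, so $\st_{\wt H}$ is a closed restriction of $\st_{S_K}$; put $\sL:=\dom\st_{\wt H}\subset\dom\st_{S_K}=\dom K^*$, and note $S\subset\wt H$ forces $\st_S\subset\st_{\wt H}$, hence $\dom(H_1+H_2)=\dom S\subset\sL$. On $\sL$ the form $\st_{\wt H}$ equals $\st_{S_K}$, so the form norm on $\sL$ is the graph norm of $(K^*)_s$; closedness of $\st_{\wt H}$ then says precisely that $\sL$ is graph\nobreakdash-closed in $\dom(K^*)_s$, i.e. $T_\sL=(K^*)_s\uphar\sL$ is a closed operator and $T_\sL^{**}=T_\sL$. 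By Proposition~\ref{SectorialCor}, $T_\sL^*(I+iB_\oplus)T_\sL$ is maximal sectorial with closed form $((I+iB_\oplus)(K^*)_sf,(K^*)_sg)=\st_{S_K}[f,g]=\st_{\wt H}[f,g]$ on $\sL=\dom\st_{\wt H}$; the one\nobreakdash-to\nobreakdash-one correspondence between closed sectorial forms and maximal sectorial relations then gives $T_\sL^*(I+iB_\oplus)T_\sL=\wt H$, which is the required representation (with $T_\sL^{**}=T_\sL$). The main obstacle I expect to be the first step: proving $\Psi\subset(K^*)_s$ and identifying the $\st_{S_K}$\nobreakdash-form norm with the graph norm of $(K^*)_s$ — after that the argument is bookkeeping with inclusions, adjoints and closures — and one should also be careful that the extremality criterion presupposes that $\wt H$ already extends $S$ (needing the separate inclusion argument in (ii)$\Rightarrow$(i)) and that the hypothesis $\sE=\sD$ is genuinely used, entering both via Theorem~\ref{KVNext} and via $\ran\Psi\subset\sE=\sD$.
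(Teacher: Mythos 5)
Your proof is correct and takes essentially the same route as the paper, which states the proposition as an immediate consequence of the abstract criterion ``$\widetilde H$ extremal $\Leftrightarrow$ $\st_{\widetilde H}$ is a restriction of $\st_{S_K}$'' (citing \cite[Theorems~8.4,~8.5]{HSSW17}) combined with the explicit form $\st_K[f,g]=((I+iB_\oplus)(K^*)_sf,(K^*)_sg)$ on $\dom K^*$ from Theorem~\ref{KVNext}, leaving the remaining details to the cited reference. The details you supply --- the inclusion $\Psi\subset (K^*)_s$ via $\ran\Psi\subset\sE=\sD=(\mul K^*)^\perp$, the identification of the form norm of $\st_{S_K}$ with the graph norm of $(K^*)_s$, and the verification that $\widetilde H$ in (ii) actually extends $H_1+H_2$ --- all check out.
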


\subsection{The form sum construction}\label{sec3.3}

The maximal sectorial relations $H_{1}$ and $H_{2}$ generate the
following closed sectorial form
\begin{equation}
\label{s-s-fs}
 ((I+iB_{1})A_{1s}^{\half} h, A_{1s}^{\half} k)
 +((I+iB_{2})A_{2s}^{\half} h, A_{2s}^{\half} k),
 \quad h,k \in \dom A_{1}^{\half} \cap \dom A_{2}^{\half}.
\end{equation}
Observe that the restriction of this form to $\dom \Psi^{**}$ is
equal to
\[
 (\Psi^{**}h, \Psi^{**}k)
 =
 ((I+iB_{1})A_{1s}^{\half} h, A_{1s}^{\half} k)
 +((I+iB_{2})A_{2s}^{\half} h, A_{2s}^{\half} k),
 \quad h,k \in \dom \Psi^{**},
\]
since $\Psi^{**} \subset (\Phi^*)_s$, cf. \eqref{s-s-qus}.
Thus, the form in \eqref{s-s-fs} has a natural domain
which is in general larger than $\dom \Psi^{**}$.

\begin{theorem}
\label{s-sum-een} Let $H_{1}$ and $H_{2}$ be maximal sectorial and let $\Phi$ be defined by \eqref{s-s-Einz}.
The maximal sectorial relation
\[
 \Phi^{**} (I+iB_\oplus ) \Phi^*
\]
is an extension of the relation $H_{1}+H_{2}$, which corresponds to
the closed sectorial form in \eqref{s-s-fs}.

Assume, in addition, that $\sE=\clos\sE_0$ and $\sD=\clos\sD_0$ (see \eqref{E0}, \eqref{domK})
satisfy the equality $\sE=\sD$ and let $\sF=\clos \sF_0$ be defined by \eqref{F0}.
Then the following statements are equivalent:
\begin{enumerate}
\def\labelenumi{\rm (\roman{enumi})}

\item $\Phi^{**} (I+iB_\oplus ) \Phi^*$  is extremal;

\item $\sE = \sF$.
\end{enumerate}
\end{theorem}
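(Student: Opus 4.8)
The plan is to verify the first (unconditional) assertion directly, and then to establish the equivalence (i) $\Leftrightarrow$ (ii) by comparing the form domains of $\Phi^{**}(I+iB_\oplus)\Phi^*$ and of the Kre\u{\i}n extension $(H_1+H_2)_K=K^{**}(I+iB_\oplus)K^*$ from Theorem~\ref{KVNext}. For the first part, observe that $K\subset\Phi$ and $\Psi\subset\Phi^*$ by Lemma~\ref{lem3.1}, so $\Phi\subset\Phi^{**}$ and $\Phi^*\subset K^*$; hence by Lemma~\ref{lemma4a}-style reasoning (or directly from \eqref{s-s-opK}, \eqref{s-s-Einz}) one has $H_1+H_2=K(I+iB_\oplus)\Psi^*\big|_{\dots}\subset \Phi^{**}(I+iB_\oplus)\Phi^*$. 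That $\Phi^{**}(I+iB_\oplus)\Phi^*$ is maximal sectorial is immediate from Proposition~\ref{SectorialCor} since $\Phi^{**}$ is closed, and by Lemma~\ref{s-second} the associated closed form is $h,k\mapsto((I+iB_\oplus)(\Phi^*)_s h,(\Phi^*)_s k)$ on $\dom(\Phi^*)_s=\dom A_1^\half\cap\dom A_2^\half$; using $(\Phi^*)_s$ as computed in \eqref{s-s-qus} this is exactly the form \eqref{s-s-fs}. This identifies the form domain of $\Phi^{**}(I+iB_\oplus)\Phi^*$ as $\dom A_1^\half\cap\dom A_2^\half$, with its real part the closure of the nonnegative form $h\mapsto\|A_{1s}^\half h\|^2+\|A_{2s}^\half h\|^2$, whose corresponding Hilbert-space completion is precisely $\sF=\clos\sF_0$ under the embedding $h\mapsto\{A_{1s}^\half h,A_{2s}^\half h\}$.

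For the equivalence, recall (as cited after Proposition~\ref{s-sum-caracter}) that a maximal sectorial extension $H$ of $S=H_1+H_2$ is extremal precisely when its closed form $\st_H$ is a restriction of $\st_{S_K}$; equivalently, since extremality can be read off on real parts, when $(\st_H)_r\subset(\st_{S_K})_r$. Under the standing assumption $\sE=\sD$, Theorem~\ref{KVNext} gives $\st_{S_K}[f,g]=((I+iB_\oplus)(K^*)_s f,(K^*)_s g)$ on $\dom K^*$, and the real part $(\st_{S_K})_r$ is the closed nonnegative form with completion $\cran(K^*)_s$. Now the key computation is to identify this completion: from \eqref{E0D0} one has $\sD_0=(I+iB_\oplus)\sE_0$ and $\sE=\sD$, so $(K^*)_s$ maps onto (a dense subset of) $\sE$ in the graph sense, and one checks — using that on $\sE_0$ the $\sH_S$-type inner product $\langle\{A_{1s}^\half f,A_{2s}^\half f\},\{A_{1s}^\half f,A_{2s}^\half f\}\rangle$ equals $\RE$ of the value of the form — that the form $(\st_{S_K})_r$ is unitarily equivalent to the standard inner product on $\sE$. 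Since always $\sE_0\subset\sF_0$, one has $\sE\subset\sF$ (this is \eqref{s-s-ef}), and the inclusion $(\st_{\Phi^{**}(I+iB_\oplus)\Phi^*})_r\supset$ — i.e.\ larger form domain — corresponds exactly to the inclusion $\sE\subset\sF$ of completions.

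Putting these together: $\Phi^{**}(I+iB_\oplus)\Phi^*$ is extremal iff its closed form is a restriction of $\st_{S_K}$, iff its form domain is contained in $\dom\st_{S_K}$ and the forms agree there; by the completion identifications this holds iff $\sF\subset\sE$, and combined with the always-valid $\sE\subset\sF$ this is equivalent to $\sE=\sF$. Concretely I would argue: if $\sE=\sF$ then the two forms \eqref{s-s-fs} and $\st_{S_K}$ have the same domain $\dom A_1^\half\cap\dom A_2^\half$ and the same real part (both being the $\sF=\sE$ inner product transported back), hence agree (the imaginary parts are pinned down by $B_\oplus$ in both cases via Lemma~\ref{s-second}), so $\Phi^{**}(I+iB_\oplus)\Phi^*=S_K$, which is certainly extremal; conversely, extremality forces $\st_{\Phi^{**}(I+iB_\oplus)\Phi^*}$ to be a restriction of $\st_{S_K}$, but since $\Phi^{**}(I+iB_\oplus)\Phi^*\supset H_1+H_2$ and the extremal extensions of $S$ sit between $S_F$ and $S_K$ (on the level of forms, $\dom\st_{S_F}\subset\dom\st_H\subset\dom\st_{S_K}$), the only way a restriction of $\st_{S_K}$ can contain all of \eqref{s-s-fs} on $\dom A_1^\half\cap\dom A_2^\half$ is if $\dom\st_{S_K}\supset\dom A_1^\half\cap\dom A_2^\half$, i.e.\ $\sF\subset\sE$, hence $\sE=\sF$.

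The main obstacle I expect is the bookkeeping in identifying the completion $\cran(K^*)_s$ (the Hilbert space underlying $(\st_{S_K})_r$) with $\sE$, and correspondingly the completion for $\Phi^{**}(I+iB_\oplus)\Phi^*$ with $\sF$, cleanly enough that the inclusion of forms translates verbatim into the inclusion $\sE\subset\sF$ of closures in $\sH\times\sH$. This is essentially the content of the abstract construction of Subsection~2.3 applied to $S=H_1+H_2$ with $T=\Phi$, $B=B_\oplus$: the space $\sH_S$ there is (isometric to) $\sF$, while $\sM$, the closure of the $\sM_0$ built from $\Phi$, turns out under the assumption $\sE=\sD$ to be (isometric to) $\sE$. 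Once that correspondence is in place, together with Theorem~\ref{s-F-N} applied to this $S$ (so that $S_K$ is realized via $J^{**}(I+iB_m)J^*$ with $J$ densely defined in $\sM\cong\sE$), the equivalence $\sE=\sF\Leftrightarrow\sM=\sH_S\Leftrightarrow S_K=\Phi^{**}(I+iB_\oplus)\Phi^*\Leftrightarrow$ extremality is a formal consequence, and no further estimates are needed.
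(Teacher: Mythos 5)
Your overall strategy for the equivalence --- characterizing extremality by the requirement that the closed form of $\Phi^{**}(I+iB_\oplus)\Phi^*$ be a restriction of $\st_{S_K}$, and then comparing the two forms --- is legitimate and genuinely different from the paper's proof, which instead invokes Proposition \ref{s-sum-caracter} directly and runs a projection argument with the operators $T_{\sL}$, $(\Phi^{*})_{s}$ and $(K^{*})_{s}$. The unconditional first part of your argument is essentially the paper's. However, two steps in your execution of the equivalence are wrong as stated. First, in (ii) $\Rightarrow$ (i) you claim that when $\sE=\sF$ the form \eqref{s-s-fs} and $\st_{S_K}$ ``have the same domain $\dom A_1^\half\cap\dom A_2^\half$'' and conclude $\Phi^{**}(I+iB_\oplus)\Phi^*=S_K$. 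This is false in general: $\dom \st_{S_K}=\dom K^*$ contains $\dom \Phi^*=\dom A_1^\half\cap\dom A_2^\half$ but is typically strictly larger (the Kre\u{\i}n extension has the largest form domain among all maximal sectorial extensions, and an extremal form sum extension need not coincide with $S_K$). What is true, and suffices, is that $\sF_0\subset\sE=\sD=\cdom K$ together with $(\Phi^{*})_{s}\subset K^*$ gives $(\Phi^{*})_{s}\subset P_\sE K^*=(K^{*})_{s}$, so that \eqref{s-s-fs} is the restriction of $\st_{S_K}$ to $\dom \Phi^*$; extremality then follows without any claim of equality with $S_K$.

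Second, in (i) $\Rightarrow$ (ii) you reduce ``\eqref{s-s-fs} is a restriction of $\st_{S_K}$'' to the domain inclusion $\dom A_1^\half\cap\dom A_2^\half\subset\dom\st_{S_K}$ and then read off $\sF\subset\sE$. But that domain inclusion holds automatically (since $\Phi^*\subset K^*$) and carries no information about $\sF$ versus $\sE$; the content lies entirely in the equality of the \emph{values} of the two forms. Concretely, for $f\in\dom\Phi^*$ the value of $\st_{S_K}$ is computed from $(K^{*})_{s}f=P_\sE(\Phi^{*})_{s}f$, so equating real parts gives $\|(\Phi^{*})_{s}f\|^2=\|P_\sE(\Phi^{*})_{s}f\|^2$, which forces $(\Phi^{*})_{s}f\in\sE$, i.e. $\sF_0\subset\sE$ and hence $\sF\subset\sE$. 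You gesture at this with ``by the completion identifications'', but the identification of $\cran (K^{*})_{s}$ with $\sE$ and of $\cran(\Phi^{*})_{s}$ with $\sF$, together with the verification that the embedding induced by restricting the forms is the identity inclusion inside $\sH\times\sH$, is exactly the missing computation; the explicit justification you give in its place does not establish (ii). (A small further slip: $H_1+H_2=K(I+iB_\oplus)\Psi$, not $K(I+iB_\oplus)\Psi^*$.)
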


\begin{proof}
By \cite[Theorem 5.1]{HSSW17} the form sum \eqref{s-s-fs} can be written as
\[
 ((I+iB_\oplus )(\Phi^{*})_s\, h,(\Phi^*)_{s}\, k), \quad h,k \in \dom
 (\Phi^{*})_{s} = \dom A_{1}^{\half} \cap \dom A_{2}^{\half},
\]
so that $\Phi^{**} (I+iB_\oplus ) \Phi^*$ is the
maximal sectorial relation in $\sH$ which corresponds to \eqref{s-s-fs}
via the first representation theorem, since $\mul \Phi^*=\mul H_1\times\mul H_2$ is clearly invariant under $B_\oplus$,
when $B_1$ and $B_2$ are the unique operators as described in Lemma \ref{s-second}.

To show that $\Phi^{**} (I+iB_\oplus ) \Phi^*$ extends
$H_{1} + H_{2}$, let $\{h, h_{1}' +
h_{2}' \} \in (H_{1}+H_{2})$ for some
$\{h, h_{1}' \} \in H_{1}$ and
$\{h, h_{2}' \} \in H_{2}$, so that
$h \in \dom H_{1} \cap \dom H_{2}$. Clearly,
$ \{h, \{A_{1s}^{\half} h, A_{2s}^{\half} h \} \} \in \Phi^*$.
Moreover,
\[
 \{ \{(I+iB_{1}) A_{1s}^{\half} h, (I+iB_{2})A_{2s}^{\half} h \},
 h_{1}' + h_{2}'\} \in  \Phi^{**},
\]
as can be verified directly:
\[
(h_{1}' + h_{2}',\varphi)-(\{(I+iB_{1})A_{1s}^{\half} h,
(I+iB_{2})A_{2s}^{\half} h \},\{A_{1s}^{\half} \varphi, A_{2s}^\half \varphi\})=0
\]
for all $\varphi \in \dom H_{1} \cap \dom H_{2}$.
Therefore
$\{h, h_{1}' + h_{2}' \} \in \Phi^{**} (I+iB_\oplus ) \Phi^*$.
Hence $H_{1} + H_{2} \subset \Phi^{**} (I+iB_\oplus ) \Phi^*$.
This proves the first statement.

Now the equivalence in the second statement will be proved.

(i) $\Rightarrow$ (ii) Since $\sE \subset \sF$ by \eqref{s-s-ef} it is enough to prove
the inclusion $\sF \subset \sE$. Assume that the form sum extension
of $H_{1} + H_{2}$ is extremal. Then by Proposition \ref{s-sum-caracter}
there exists a subspace $\sL$ such that
\begin{equation}
\label{NICE1}
 ((\Phi^{*})_{s})^{*} (I+iB_\oplus ) (\Phi^{*})_{s}
 = \Phi^{**} (I+iB_\oplus ) \Phi^{*}
 = T_{\sL}^{*} (I+iB_\oplus ) T_{\sL}^{**}.
\end{equation}
Let $P_\sE$ be the orthogonal projection of $\sH \times \sH$ onto
$\sE$. By \eqref{s-s-trits} $(\Phi^{*})_{s} \subset \Phi^{*} \subset
K^{*}$ and therefore $P_\sE(\Phi^{*})_{s}\subset P_\sE K^*=(K^*)_{s}$, since
by assumption $\sE=\sD=\cdom K$.
Moreover, $\dom P_\sE(\Phi^{*})_{s} =\dom (\Phi^{*})_{s} = \dom
T_{\sL}^{**}$ and since $P_\sE(\Phi^{*})_{s}$ and
$T_{\sL}^{**}$ are restrictions of the operator $(K^*)_{s}$ it follows
that
\[
 P_\sE(\Phi^{*})_{s} = T_{\sL}^{**},
 \quad
 ((\Phi^{*})_{s})^{*}P_\sE = T_{\sL}^{*}.
\]
The assumption $\sE=\sD$ also implies that $\sE=\cdom K$ is invariant under $B_\oplus$; see \eqref{E0D0}.
Now one obtains from \eqref{NICE1} the equalities
\begin{eqnarray}
 ((\Phi^{*})_{s})^{*} (I+iB_\oplus ) (\Phi^{*})_{s} & = & T_{\sL}^{*} (I+iB_\oplus ) T_{\sL}^{**}
 \nonumber \\
 & = & ((\Phi^{*})_{s})^{*}P_{\sE} (I+iB_\oplus ) T_{\sL}^{**}
 \nonumber \\
 & = & ((\Phi^{*})_{s})^{*} (I+iB_\oplus ) P_{\sE} T_{\sL}^{**}
 \nonumber \\
 & = & ((\Phi^{*})_{s})^{*} (I+iB_\oplus ) T_{\sL}^{**} . \nonumber
\end{eqnarray}
Hence for every $f\in\dom ((\Phi^{*})_{s})^{*} (I+iB_\oplus ) (\Phi^{*})_{s}$ one has
$$(I+iB_\oplus ) ((\Phi^{*})_{s}f-T_{\sL}^{**}f) \in \ker ((\Phi^{*})_{s})^{*}.$$
Since $(\Phi^{*})_{s}f-T_{\sL}^{**}f \in \sF=\cran
(\Phi^{*})_{s}=(\ker ((\Phi^{*})_{s})^{*})^\perp$, see \eqref{s-s-qus}, \eqref{F0}, this implies that
\[
 \left((I+iB_\oplus )((\Phi^{*})_{s}f-T_{\sL}^{**}f ), (\Phi^{*})_{s}f-T_{\sL}^{**}f \right)=0
\]
and thus
$(\Phi^{*})_{s}f-T_{\sL}^{**}f=0$. Therefore $(\Phi^{*})_{s}(\dom
((\Phi^{*})_{s})^{*} (I+iB_\oplus ) (\Phi^{*})_{s})\subset \ran T_{\sL}^{**}\subset \sE$.
Since $\dom ((\Phi^{*})_{s})^{*} (I+iB_\oplus )(\Phi^{*})_{s}$
is a core for the corresponding closed form, or equivalently, the closure of
$(\Phi^{*})_{s} \uphar\dom((\Phi^{*})_{s})^{*} (I+iB_\oplus ) (\Phi^{*})_{s}$ is equal to
$(\Phi^{*})_{s}$, the claim follows:
$\sF = \cran (\Phi^{*})_{s} \subset \sE.$

(ii) $\Rightarrow$ (i) Assume that $\sE = \sF$. Then $\sF_{0}=\ran (\Phi^{*})_{s}\subset \sE$ and
the equalities $\sE=\sD=\cdom K$ combined with $(\Phi^{*})_{s} \subset \Phi^{*} \subset
K^{*}$ imply that $(\Phi^{*})_{s}\subset P_\sE K^*=(K^*)_{s}$.
Therefore $(\Phi^{*})_{s} = T_{\sL}$ with the choice
\[
\sL = \dom (\Phi^{*})_{s} = \dom A_{1}^{\half} \cap \dom A_{2}^{\half}.
\]
Hence,
\[
 \Phi^{**} (I+iB_\oplus ) \Phi^{*}
 = ((\Phi^{*})_{s})^{*} (I+iB_\oplus ) (\Phi^{*})_{s}
 = T_{\sL}^{*} (I+iB_\oplus ) T_{\sL}^{**},
\]
which shows that $\Phi^{**} (I+iB_\oplus ) \Phi^{*}$ is  extremal,
cf. Proposition \ref{s-sum-caracter}.
\end{proof}

The maximal sectorial relation $\Phi^{**} (I+iB_\oplus ) \Phi^*$ naturally extends the factorized
sectorial relation $\Phi(I+iB_\oplus ) \Phi^*=H_1+H_2$ and, as indicated in Section \ref{sec1}
it is called the \textit{form sum extension of the sectorial
relation $H_{1} + H_{2}$} (induced by the form \eqref{s-s-fs}).
Its multivalued part is given by
$\mul \Phi^{**} (I+iB_\oplus ) \Phi^*= \mul \Phi^{**}=(\dom
\Phi^*)^\perp$, so that
\[
 \mul \Phi^{**} (I+iB_\oplus ) \Phi^*
 =
 (\dom A_{1}^\half \cap \dom A_{2}^\half)^\perp.
\]
In particular, the form sum extension of $H_{1} + H_{2}$ or, equivalently, the closure of $\Phi$, is an operator
precisely when $\dom A_{1}^\half \cap \dom A_{2}^\half$ is dense in $\sH$.
The orthogonal operator part of $\Phi^{**} (I+iB_\oplus ) \Phi^*$ is the
maximal sectorial operator which corresponds to the form sum
\eqref{s-s-fs} restricted to the closure of $\dom H_{1}^\half \cap
\dom H_{2}^\half$.
As a comparison with $(H_1+H_2)_K$ recall that $\Phi^*\subset K^*$ by Lemma \ref{lem3.1}
and that $H_1+H_2$ is ``sectorially closable''  if and only if $(H_1+H_2)_K$ is an operator,
or, equivalently, $K^*$ is densely defined (see Section \ref{sec3.2}). In particular,
if the form sum is densely defined then also $(H_1+H_2)_K$ is a densely defined operator.


\bibliographystyle{amsplain}

\end{document}